\numberwithin{equation}{section}
\def\RR{{\mathbb R}}
\def\MM{{\mathbb M}}
\def\LL{{\mathbb L}}
\def\GG{{\mathbb G}}
\def\eps{\varepsilon}
\def\cupp{\mathop{\cup}}
\def\ell{l}
\def\mint{{{\bf-}\!\!\!\!\!\hspace{-.1em}\int}}  
\def\mmint{{{\bf-}\!\!\!\!\hspace{-.1em}\int}} 
\def\limsup{\mathop{\overline{\lim}}}
\def\liminf{\mathop{\underline{\lim}}}
\def\DiM{\kappa}
\def\endproof{$\blacksquare$}
\newtheorem{theorem}{Theorem}[section]
\newtheorem{lemma}[theorem]{Lemma}
\newtheorem{proposition}[theorem]{Proposition}
\newtheorem{corollary}[theorem]{Corollary}
\theoremstyle{definition}
\newtheorem{definition}[theorem]{Definition}
\theoremstyle{remark}
\newtheorem{remark}[theorem]{Remark}
\title[Relaxation of nonconvex unbounded integrals in Cheeger-Sobolev spaces]{Relaxation of nonconvex unbounded integrals with general growth conditions in Cheeger-Sobolev spaces}
\author{\sc Omar Anza Hafsa}
\address{{(\rm Omar Anza Hafsa)} UNIVERSITE DE NIMES, Laboratoire MIPA, Site des Carmes, Place Gabriel P\'eri, 30021 N\^\i mes, France and LMGC, UMR-CNRS 5508, Place Eug\`ene Bataillon, 34095 Montpellier, France.}
\email{omar.anza-hafsa@unimes.fr}
\author{Jean-Philippe Mandallena}
\address{{(\rm Jean-Philippe Mandallena)} UNIVERSITE DE NIMES, Laboratoire MIPA, Site des Carmes, Place Gabriel P\'eri, 30021 N\^\i mes, France.}
\email{jean-philippe.mandallena@unimes.fr}
\keywords{Relaxation, Integral representation, Unbounded nonconvex integral, Ru-usc, General growth conditions, Metric measure space, Cheeger-Sobolev space}
\begin{document}

\begin{abstract}
We study relaxation of nonconvex integrals of the calculus of variations in the setting of Cheeger-Sobolev spaces when the  integrand has not polynomial growth and can take infinite values.

\end{abstract}

\maketitle


\section{Introduction}

In this paper we are concerned with relaxation of integrals of type
\begin{equation}\label{InTRo-Eq1}
\int_X L(x,\nabla_\mu u(x))d\mu(x),
\end{equation}
where $(X,d,\mu)$ is a metric measure space, with $(X,d)$ a length space separable and compact, satisfying a weak $(1,p)$-Poincar\'e inequality with $p>1$ and such that $\mu$ is a doubling positive Radon measure on $X$, $u\in W^{1,p}_\mu(X;\RR^m)$ with $m\geq 1$ an integer and $\nabla_\mu u$ is the $\mu$-gradient of $u$, and $L:X\times\MM\to[0,\infty]$ is a Borel measurable integrand not necessarily convex with respect to $\xi\in\MM$, where $\MM$ denotes the space of all $m\times N$ matrices with $N\geq 1$ an integer. Such a relaxation problem in such a metric measure setting was studied for the first time in \cite{AHM15} (see also \cite{boubusep97,jpm00,oah-jpm03,fragala03,oah-jpm04,jpm05,mocanu05,HakKinPaLe14} and the references therein) when $L$ has $p$-growth, i.e., there exist $\alpha,\beta>0$ such that for every $x\in X$ and every $\xi\in\MM$,
\begin{equation}\label{InTRo-Eq2}
\alpha|\xi|^p\leq L(x,\xi)\leq\beta(1+|\xi|^p),
\end{equation}
where it is proved (see \cite[Theorem 2.21 and Corollary 2.27]{AHM15}) that if \eqref{InTRo-Eq2} holds then the relaxation of \eqref{InTRo-Eq1} with respect to the norm of $L^p_\mu(X;\RR^m)$ is given by  
$$
\int_X\mathcal{Q}_\mu L(x,\nabla_\mu u(x))d\mu(x),
$$
where $u\in W^{1,p}_\mu(X;\RR^m)$ and $\mathcal{Q}_\mu L:X\times \MM\to[0,\infty]$, called the $\mu$-quasiconvexification of $L$, is defined by
$$
\mathcal{Q}_\mu L(x,\xi):=\limsup_{\rho\to0}\inf\left\{\mint_{Q_\rho(x)}L(y,\xi+\nabla_\mu w(y))d\mu(y):w\in W^{1,p}_{\mu,0}(Q_\rho(x);\RR^m)\right\}.
$$
where $Q_\rho(x)$ is the open ball centered at $x\in X$ with radius $\rho>0$, and, for each open set $A$ of  $X$, $W^{1,p}_{\mu,0}(A;\RR^m)$ is the closure of ${\rm Lip}_0(A;\RR^m)$ with respect to $W^{1,p}_\mu$-norm with ${\rm Lip}_0(A;\RR^m):=\big\{u\in{\rm Lip}( X;\RR^m):u=0\hbox{ on } X\setminus A\big\}$, where ${\rm Lip}( X;\RR^m)$ is the class of Lipschitz functions from $ X$ to $\RR^m$ (see \S 3.1 for more details). 

Our motivation for developing relaxation, and more generally calculus of variations, in the setting of metric measure spaces comes from applications to hyperelasticity. In fact, the interest of considering a general measure is that its support can modeled a hyperelastic structure together with its singularities like for example thin dimensions, corners, junctions, etc. Such mechanical singular objects naturally lead to develop calculus of variations in the setting of metric measure spaces. In this way, having in mind the two basic conditions of hyperelasticity, i.e., ``the non-interpenetration of the matter" and ``the necessity of an infinite amount of energy to compress a finite piece of matter into a point",  it is then of interest to study relaxation of nonconvex integrals of type \eqref{InTRo-Eq1} when the integrand has not $p$-growth and can take infinite values: this is the general purpose of the present paper. 

For related works in the Euclidean case, i.e., when $(X,d,\mu)=(\overline{\Omega},|\cdot-\cdot|,\mathcal{L}_N)$ where $\Omega$ is a bounded subset of $\RR^N$ and $\mathcal{L}_N$ is the Lebesgue measure, we refer the reader to \cite{sychev04, sychev05, oah-jpm07, oah-jpm08a, sychev10, oah10, oah-jpm12, jpm13, conti-dolzmann15, jpm-ms16a, jpm-ms16b} and the references therein.  

\medskip

Generally speaking, in this paper our main contribution (see \S2.1, \S2.2 and \S2.3 for more details and more precisely Theorem \ref{MainTheorem}) is to prove that for $p>\DiM$, with $\DiM:={\ln(C_{d})\over \ln(2)}$ where $C_d\geq 1$ is the doubling constant, see \eqref{D-meas}, if $L$ is {\em radially uniformly upper semicontinuous}, i.e., there exists $a\in L^1_\mu(X;]0,\infty])$ such that
$$
\limsup_{t\to1^-}\sup_{x\in X}\sup_{\xi\in \LL_x}{L(x,t\xi)-L(x,\xi)\over a(x)+L(x,\xi)}\leq 0,
$$

where $\LL_x$ denotes the effective domain of $L(x,\cdot)$, and if $L$ has {\em$G$-growth}, i.e., there exist $\alpha,\beta>0$ such that for every $x\in X$ and every $\xi\in\MM$,
$$
\alpha G(x,\xi)\leq L(x,\xi)\leq \beta (1+G(x,\xi)),
$$
where $G:X\times\MM\to[0,\infty]$ is a Borel measurable and $p$-coercive integrand satisfying some ``convexity" assumptions, see \eqref{growth-on-Gx}, \eqref{hyp-convexity}, \eqref{lsc-intGx} and \eqref{growth-on-Gx-2}, then the relaxation of \eqref{InTRo-Eq1} with respect to the norm of $L^p_\mu(X;\RR^m)$ is given by  
$$
\int_X \widehat{\mathcal{Q}_\mu L}(x,\nabla_\mu u(x))d\mu(x),
$$
where $\widehat{\mathcal{Q}_\mu L}:X\times\MM\to[0,\infty]$ is defined by
$$
\widehat{\mathcal{Q}_\mu L}(x,\xi):=\liminf_{t\to 1^-}{\mathcal{Q}_\mu L}(x,t\xi).
$$
Moreover, we also prove (see Corollary \ref{CoRoLLaRY---MainTheorem}) that if, in addition, $\mathcal{Q}_\mu G=G$ and, for each $x\in X$, $\mathcal{Q}_\mu L(x,\cdot)$ is lower semicontinuous on the interior ${\rm int}(\LL_x)$ of $\LL_x$, then
$$
\widehat{\mathcal{Q}_\mu L}(x,\xi)=\left\{
\begin{array}{ll}
\mathcal{Q}_\mu L(x,\xi)&\hbox{if }x\in X\hbox{ and }\xi\in{\rm int}(\LL_x)\\
\lim\limits_{t\to 1^-}{\mathcal{Q}_\mu L}(x,t\xi)&\hbox{if }x\in X\hbox{ and }\xi\in\partial\LL_x\\
\infty&\hbox{otherwise.}
\end{array}
\right.
$$

\medskip

The plan of the paper is as follows. The main result (see Theorem \ref{MainTheorem} and also Corollary \ref{CoRoLLaRY---MainTheorem}) is given in Section 2. The proof of Theorem \ref{MainTheorem} is established in Section 4, whereas Corollary \ref{CoRoLLaRY---MainTheorem}, which is a consequence of Theorem \ref{MainTheorem}, is proved at the end of Section 2. Section 3 is devoted to several auxiliary results needed for proving Theorem \ref{MainTheorem}.

\subsubsection*{Notation} The open and closed balls centered at $x\in X$ with radius $\rho>0$ are denoted by:
\begin{trivlist}
\item[] $Q_\rho(x):=\Big\{y\in X:d(x,y)<\rho\Big\};$
\item[] $\overline{Q}_\rho(x):=\Big\{y\in X:d(x,y)\leq\rho\Big\}.$
\end{trivlist}
For $x\in X$ and $\rho>0$ we set 
$$
\partial Q_\rho(x):=\overline{Q}_\rho(x)\setminus Q_\rho(x)=\Big\{y\in X:d(x,y)=\rho\Big\}.
$$
For $A\subset X$, the diameter of $A$ (resp. the distance from a point $x\in X$ to the subset $A$) is defined by ${\rm diam}(A):=\sup_{x,y\in A}d(x,y)$ (resp. ${\rm dist}(x,A):=\inf_{y\in A}d(x,y)$).

The symbol $\mmint$ stands for the mean-value integral 
$$
\mint_Bf d\mu={1\over\mu(B)}\int_Bf d\mu.
$$


\section{Main results}

\subsection{Setting of the problem} Let $(X,d,\mu)$ be a metric measure space, where $(X,d)$ is a length space which is separable and compact, and $\mu$ is a positive Radon measure on $X$. In what follows, we assume that $\mu$ is doubling, i.e., there exists a constant $C_d\geq 1$ (called doubling constant) such that 
\begin{equation}\label{D-meas}
\mu\left(Q_\rho(x)\right)\leq C_d \mu\left(Q_{\rho\over 2}(x)\right)
\end{equation}
for all $x\in X$ and all $\rho>0$, and $X$ supports a weak $(1,p)$-Poincar\'e inequality with $1<p<\infty$, i.e., there exist $C_P>0$ and $\sigma\geq 1$ such that for every $x\in X$ and every $\rho>0$, 
\begin{equation}\label{1-p-PI}
\mint_{Q_\rho(x)}\left|f- \mint_{Q_\rho(x)} f d\mu\right|d\mu\leq \rho C_P\left(\mint_{Q_{\sigma\rho}(x)} g^p d\mu\right)^{1\over p}
\end{equation}
for every $f\in L^p_\mu(X)$ and every $p$-weak upper gradient $g\in L^p_\mu(X)$ for $f$. (For the definition of the concept of $p$-weak upper gradient, see Definition \ref{Def-p-weak-upper-gradient}.) As $\mu$ is doubling, for each $Q_{r}(\bar x)$ with $r>0$ and $\bar x\in X$ we have
\begin{equation}\label{Equa-Inject-compact}
{\mu(Q_{\rho}(x))\over\mu(Q_{r}(\bar x))}\geq 4^{-\DiM}\left({\rho\over r}\right)^\DiM
\end{equation}
for all $x\in Q_{r}(\bar x)$ and all $0<\rho\leq r$, where $\DiM:={\ln(C_d)\over\ln(2)}$ (see \cite[Lemma 4.7]{hajlasz02}).

From now on, we suppose $p>\DiM$ and we fix an integer $m\geq 1$.  

Let $\mathcal{O}(X)$ be the class of open subsets of $X$ and let $E:W^{1,p}_\mu(X;\RR^m)\times \mathcal{O}(X)\to[0,\infty]$ be the variational integral defined by 
\begin{equation}\label{Metric-Funct-1}
E(u,A):=\int_A L(x,\nabla_\mu u(x))d\mu(x),
\end{equation}
where $L:X\times\MM\to[0,\infty]$ is a Borel measurable integrand not necessarily convex with respect to $\xi\in\MM$, where $\MM$ denotes the space of all $m\times N$ matrices with $N\geq 1$ an integer.

The space $W^{1,p}_\mu(X;\RR^m)$ denotes the class of $p$-Cheeger-Sobolev functions from $X$ to $\RR^m$  and $\nabla_\mu u$ is the $\mu$-gradient of $u$ (see \S 3.1 for more details). 

Let $\overline{E}:W^{1,p}_\mu(X;\RR^m)\times\mathcal{O}(X)\to[0,\infty]$ be the ``relaxed" variational functional of the variational integral $E$ with respect to the strong convergence in $L^p_\mu(X;\RR^m)$, i.e., 
$$
\overline{E}(u,A):=\inf\left\{\liminf_{n\to\infty} E(u_n,A):u_n\stackrel{L^p_\mu}{\to }u\right\}.
$$
The object of the paper is to study the problem of finding an integral representation for $\overline{E}(\cdot,X)$ in the case where $L$ has not $p$-growth and can take infinite values.

\subsection{General growth and ru-usc condition} Let $G:X\times \MM\to[0,\infty]$ be a Borel measurable integrand which is $p$-coercive, i.e., there exists $c>0$ such that for every $x\in X$ and every $\xi\in\MM$, 
\begin{eqnarray}\label{p-coercivity}
G(x,\xi)\geq c|\xi|^p,
\end{eqnarray}
and for which there exists $r>0$ such that 
\begin{eqnarray}\label{growth-on-Gx}
\sup_{|\xi|\leq r}G(\cdot,\xi)\in L^1_\mu(X).
\end{eqnarray}
\begin{remark}
If $\sup_{|\xi|\leq r}G(\cdot,\xi)\in L^\infty_\mu(X)$ then \eqref{growth-on-Gx} is satisfied. In particular, this latter condition holds when $G$ depends only on $\xi$ and is convex. 
\end{remark}
We also assume that there exists $\gamma>0$ such that for every $x\in X$, every $t\in]0,1[$ and every $\xi,\zeta\in\MM$,
\begin{eqnarray}\label{hyp-convexity}
G(x,t\xi+(1-t)\zeta)\leq \gamma(1+G(x,\xi)+G(x,\zeta)).
\end{eqnarray}

\begin{remark}
If \eqref{hyp-convexity} holds and if $0\in{\rm int}\big(\{\xi\in\MM:G(\cdot,\xi)\in L^1_\mu(X)\}\big)$ then \eqref{growth-on-Gx} is verified, see \cite[Lemma 4.1]{oah-jpm12}.
\end{remark}

\begin{remark}\label{Remark-Dom-Gx-is-Convex}
If \eqref{hyp-convexity} holds then the effective domain $\GG_x$ of $G(x,\cdot)$ is convex.
\end{remark}

Let $\mathcal{G},\overline{\mathcal{G}}:W^{1,p}_\mu(X;\RR^m)\to[0,\infty]$ be the integral functionals defined by:
\begin{eqnarray}
&&\displaystyle\mathcal{G}(u):=\int_X G(x,\nabla_\mu u(x))d\mu(x);\label{Def-Of-LSC-G-For-ENdofTheProof-BiS}\\
&&\displaystyle\overline{\mathcal{G}}(u):=\inf\left\{\liminf_{n\to\infty} \mathcal{G}(u_n):u_n\stackrel{L^p_\mu}{\to }u\right\}.\label{Def-Of-LSC-G-For-ENdofTheProof}
\end{eqnarray}
Let us denote the effective domains of the functionals $\mathcal{G}$ and $\overline{\mathcal{G}}$ by $\mathfrak{G}$ and $\overline{\mathfrak{G}}$ respectively. We futhermore assume that:
\begin{eqnarray}
&&\hskip-20mm\mathfrak{G}=\overline{\mathfrak{G}};\label{lsc-intGx}\\
&&\hskip-20mm\hbox{if }u\in \mathfrak{G}\hbox{ then }\lim_{r\to 0}\mint_{Q_r(x)}\big|G(y,\nabla_\mu u(x))-G(x,\nabla_\mu u(x)\big|d\mu(y)=0\hbox{ for $\mu$-a.a. $x\in X$.}\label{growth-on-Gx-2}
\end{eqnarray}

\begin{remark}
The assumptions \eqref{growth-on-Gx-2} is satisfied in the following cases:
\begin{enumerate}[leftmargin=*]
\item if $G(\cdot,\xi)\in L^1_\mu(X)$ for all $\xi\in\MM$ then \eqref{growth-on-Gx-2} holds;
\item if $G$ only depends on $\xi$ then \eqref{growth-on-Gx-2} holds;
\item if $G(x,\xi)=G_1(x)+G_2(\xi)$ for all $x\in X$ and all $\xi\in \MM$ and if $G_1\in L^1_\mu(X)$ then \eqref{growth-on-Gx-2} holds;
\item if $G(x,\xi)=G_1(x)G_2(\xi)$ for all $x\in X$ and all $\xi\in \MM$ and if $G_1\in L^1_\mu(X)$ then \eqref{growth-on-Gx-2} holds
\end{enumerate}
\end{remark}

Throughout the paper, we assume that $L$ has $G$-growth, i.e.,  there exist $\alpha,\beta>0$ such that for every $x\in X$ and every $\xi\in\MM$,
\begin{eqnarray}\label{Gx-growth}
\alpha G(x,\xi)\leq L(x,\xi)\leq \beta (1+G(x,\xi)).
\end{eqnarray}

\begin{remark}\label{Remark-Dom-Lx-is-Convex}
If \eqref{hyp-convexity} and \eqref{Gx-growth} hold then the effective domain $\LL_x$ of $L(x,\cdot)$ is equal to $\GG_x$, and so is convex.
\end{remark}

\begin{remark}
If \eqref{Gx-growth} is satisfied then the effective domain of the functional $\overline{E}(\cdot,X)$ is equal to $\overline{\mathfrak{S}}$, and so to $\mathfrak{S}$ when \eqref{lsc-intGx} holds.
\end{remark}

When $G(x,\cdot)\equiv |\cdot |^p$ we say that $L$ has $p$-growth. The $p$-growth case was already studied in \cite{AHM15}. 
The object of this paper is to deal with the $G$-growth case.  For this, in addition, we need to suppose that $L$ is radially uniformly upper semicontinuous (ru-usc), i.e., there exists $a\in L^1_{\mu}(X;]0,\infty])$ such that
\begin{eqnarray}\label{ru-usc-condition}
\limsup_{t\to 1^-}\Delta^a_L(t)\leq 0
\end{eqnarray}
with $\Delta_L^a:[0,1]\to]-\infty,\infty]$ given by
$$
\Delta_{L}^a(t):=\sup_{x\in X}\sup_{\xi\in \LL_x}{L(x,t\xi)-L(x,\xi)\over a(x)+L(x,\xi)}.
$$
(For more details on the concept of ru-usc, see \S 3.3.)

\subsection{Integral representation theorem}

In what follows $p>\DiM$, where $\DiM:={\ln(C_d)\over \ln(2)}$ with $C_d\geq 1$ given by the inequality \eqref{D-meas}, and $m\geq 1$. Let $\mathcal{Q}_\mu L:X\times\MM\to[0,\infty]$, called the $\mu$-quasiconvexification of $L$, be given by
\begin{equation}\label{DefInITioN-Of-Hrho-mu-Lx-xi}
\mathcal{Q}_\mu L(x,\xi):=\limsup_{\rho\to 0}\inf\left\{\mint_{Q_\rho(x)}L(y,\xi+\nabla_\mu w(y))d\mu(y):w\in W^{1,p}_{\mu,0}(Q_\rho(x);\RR^m)\right\},
\end{equation}
where, for each $A\in\mathcal{O}(X)$, $W^{1,p}_{\mu,0}(A;\RR^m)$ is the closure of ${\rm Lip}_0(A;\RR^m)$ with respect to $W^{1,p}_\mu$-norm with ${\rm Lip}_0(A;\RR^m):=\big\{u\in{\rm Lip}( X;\RR^m):u=0\hbox{ on } X\setminus A\big\}$, where ${\rm Lip}( X;\RR^m)$ is the class of Lipschitz functions from $ X$ to $\RR^m$ (see \S 3.1 for more details). The main result of the paper is the following.

\begin{theorem}\label{MainTheorem}
If \eqref{p-coercivity}, \eqref{growth-on-Gx}, \eqref{hyp-convexity}, \eqref{lsc-intGx}, \eqref{growth-on-Gx-2}, \eqref{Gx-growth} and \eqref{ru-usc-condition} hold then   
\begin{equation}\label{Main-Relaxed-InTeGrAl}
\overline{E}(u,X)=\left\{
\begin{array}{ll}
\displaystyle \int_X \widehat{\mathcal{Q}_\mu L}(x,\nabla_\mu u(x))d\mu(x)&\hbox{if }u\in\mathfrak{G}\\
\infty&\hbox{if }u\in W^{1,p}_\mu( X;\RR^m)\setminus\mathfrak{G},
\end{array}
\right.
\end{equation}
where $\widehat{\mathcal{Q}_\mu L}:X\times \MM\to[0,\infty]$ is given by
$$
\widehat{\mathcal{Q}_\mu L}(x,\xi)=\liminf\limits_{t\to 1^-}\mathcal{Q}_\mu L(x,t\xi).
$$
\end{theorem}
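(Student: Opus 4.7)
The proof would split naturally into three parts: effective domain, lower bound, and upper bound, with the third being the heart of the matter.

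For the domain, if $u\in W^{1,p}_\mu(X;\RR^m)\setminus\mathfrak{G}$ and $(u_n)\to u$ in $L^p_\mu$ satisfied $\liminf_n E(u_n,X)<\infty$, then the inequality $\alpha\mathcal{G}(u_n)\leq E(u_n,X)$ coming from the lower $G$-bound in \eqref{Gx-growth} together with \eqref{lsc-intGx} would force $u\in\overline{\mathfrak{G}}=\mathfrak{G}$, a contradiction; hence $\overline{E}(u,X)=\infty$. For the lower bound on $\mathfrak{G}$, I would first establish $\overline{E}(u,X)\geq\int_X\mathcal{Q}_\mu L(x,\nabla_\mu u(x))\,d\mu(x)$ by the blow-up argument at $\mu$-Lebesgue points of $\nabla_\mu u$, combined with localization on small balls and the volume-scaling \eqref{Equa-Inject-compact}, following \cite[Theorem~2.21]{AHM15}; this step relies only on the $p$-coercivity \eqref{p-coercivity} (inherited by $L$ through \eqref{Gx-growth}) and not on $p$-growth from above, so it carries over to the present setting using only the auxiliary results of Section~3. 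Then the ru-usc property \eqref{ru-usc-condition}, transferred from $L$ to $\mathcal{Q}_\mu L$ (a transfer I would verify in \S3.3), yields the pointwise inequality $\mathcal{Q}_\mu L(x,t\xi)\leq\mathcal{Q}_\mu L(x,\xi)+\Delta(t)(a(x)+\mathcal{Q}_\mu L(x,\xi))$ with $\Delta(t)\to 0$, so that $\widehat{\mathcal{Q}_\mu L}\leq\mathcal{Q}_\mu L$ on $\LL_x$ by passing $\liminf_{t\to 1^-}$; integrating completes the lower bound.

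For the upper bound I would construct a recovery sequence directly, tailored to $\widehat{\mathcal{Q}_\mu L}$ rather than $\mathcal{Q}_\mu L$. Fix $\varepsilon>0$ and, using the Vitali covering lemma in the doubling setting, partition (up to a $\mu$-null set) $X$ into balls $Q_{\rho_j}(x_j)$ on which, by the pointwise Lebesgue-type continuity \eqref{growth-on-Gx-2} and Cheeger-Sobolev differentiability, $\nabla_\mu u$ is nearly the constant matrix $\xi_j:=\nabla_\mu u(x_j)$. On each ball pick a scalar $t_j\in(0,1)$ close to $1$ with $\mathcal{Q}_\mu L(x_j,t_j\xi_j)\leq\widehat{\mathcal{Q}_\mu L}(x_j,\xi_j)+\varepsilon$ (admissible because $\LL_{x_j}$ is convex with $0\in\LL_{x_j}$ by \eqref{growth-on-Gx}, so $t_j\xi_j\in{\rm int}(\LL_{x_j})$, and because the defining $\liminf$ in $\widehat{\mathcal{Q}_\mu L}$ is realized along a sequence $t\to 1^-$). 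Pick a near-optimal corrector $w_{j,n}\in W^{1,p}_{\mu,0}(Q_{\rho_j}(x_j);\RR^m)$ for the infimum in \eqref{DefInITioN-Of-Hrho-mu-Lx-xi} at $(x_j,t_j\xi_j)$, and glue the $w_{j,n}$ to $u$ via the Cheeger-Sobolev cutoff/join procedure from \cite{AHM15}. The glued $u_n$ satisfies $u_n\to u$ in $L^p_\mu$ (from $p$-coercivity, $t_j\to 1$, and $L^p_\mu$-smallness of the $w_{j,n}$), and an accounting using the $G$-upper bound $L\leq\beta(1+G)$, ru-usc in the form $L(\cdot,t\xi)\leq(1+\Delta(t))(L(\cdot,\xi)+a(\cdot))$, and \eqref{growth-on-Gx-2} gives
$$
\limsup_{n\to\infty}E(u_n,X)\leq\sum_j\mu\bigl(Q_{\rho_j}(x_j)\bigr)\,\widehat{\mathcal{Q}_\mu L}(x_j,\xi_j)+O(\varepsilon),
$$
which is within $O(\varepsilon)$ of $\int_X\widehat{\mathcal{Q}_\mu L}(x,\nabla_\mu u)\,d\mu$; a diagonal extraction as $\varepsilon\to 0$ finishes.

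The principal obstacle is the joint control required in this upper bound. In \cite{AHM15} the $p$-growth upper bound simultaneously provides $L^p_\mu$-smallness of the correctors and equi-integrability of $|\nabla_\mu u_n|^p$; under only $G$-growth, both of these must instead come from $L^1_\mu$-equi-integrability of $G(\cdot,\nabla_\mu u_n)$, which is exactly what the combination of \eqref{growth-on-Gx-2} (pointwise $L^1$-continuity of $G$) and ru-usc (multiplicative absorption of errors by $\Delta(t)\to 0$) is designed to deliver. A secondary technical point is that the Cheeger-Sobolev cutoff/join step must be re-verified under $G$-growth rather than $p$-growth, which again uses the ru-usc absorption of the boundary-layer energy; this is where the condition $p>\DiM$ enters, via the capacity/embedding estimates needed to control the cutoff in the metric setting.
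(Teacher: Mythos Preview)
Your overall split (domain, lower bound, upper bound) is a legitimate strategy, but it is genuinely different from the paper's, and your description of the lower bound contains a real gap.

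The paper does \emph{not} separate lower and upper bounds. Instead it runs the ``global method'': Step~1 shows via De~Giorgi--Letta that $\overline{E}(u,\cdot)$ is a measure; Step~2 shows $\overline{E}(u,A)=\lim_{t\to1^-}\overline{E}_0(tu,A)$ where $\overline{E}_0$ carries Dirichlet data; Steps~3--4 identify $\overline{E}(u,\cdot)$ with the Vitali envelope $\widecheck{\rm m}^*_u$ of the local Dirichlet infima $\widecheck{\rm m}_u(A)=\limsup_{t\to1^-}{\rm m}_{tu}(A)$ and then differentiate; Step~5 proves, for $\mu$-a.e.\ $x$, the \emph{two-sided} identity $\lim_{\rho\to0}\widecheck{\rm m}_u(Q_\rho(x))/\mu(Q_\rho(x))=\liminf_{t\to1^-}\mathcal{Q}_\mu L(x,t\nabla_\mu u(x))=\widehat{\mathcal{Q}_\mu L}(x,\nabla_\mu u(x))$. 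So the paper never builds a global recovery sequence; the ``upper'' and ``lower'' content both live in the pointwise Step~5 comparison between ${\rm m}_{tu}$ and ${\rm m}_{tu_x}$, and both directions use the same cut-off construction.

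Your lower bound sketch is where the gap sits. You assert that the blow-up inequality $\overline{E}(u,X)\geq\int_X\mathcal{Q}_\mu L(x,\nabla_\mu u)\,d\mu$ ``relies only on the $p$-coercivity \ldots and not on $p$-growth from above'' and hence carries over from \cite{AHM15}. This is not right. Since $\mathcal{Q}_\mu L$ is defined with competitors in $W^{1,p}_{\mu,0}(Q_\rho(x);\RR^m)$, any blow-up proof must at some point force boundary data on a small ball, i.e.\ replace $u_n$ by some $\tilde u_n=\varphi u_n+(1-\varphi)u_x$ and control $\int_{Q_\rho\setminus Q_{\tau\rho}}L(y,\nabla_\mu\tilde u_n)\,d\mu$. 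But $\nabla_\mu\tilde u_n = D_\mu\varphi\otimes(u_n-u_x)+\varphi\nabla_\mu u_n+(1-\varphi)\nabla_\mu u(x)$ is \emph{not} a convex combination, so \eqref{hyp-convexity} does not apply to it directly, and without $p$-growth you have no other handle on $L$ of this matrix. The paper's cure, which you correctly invoke for your upper bound but omit here, is to pre-scale by $t\in(0,1)$: one works with $t\tilde u_n$, writes $t\nabla_\mu\tilde u_n=(1-t)\bigl[\tfrac{t}{1-t}D_\mu\varphi\otimes(u_n-u_x)\bigr]+t\bigl[\varphi\nabla_\mu u_n+(1-\varphi)\nabla_\mu u(x)\bigr]$ as a genuine convex combination, applies \eqref{hyp-convexity} twice, and then uses \eqref{growth-on-Gx}, $p>\kappa$ (for the $L^\infty$ smallness of $u_n-u_x$) and ru-usc to pass $t\to1^-$. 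In other words, the lower bound is not cheaper than the upper bound; both need the ru-usc $t$-scaling and \eqref{hyp-convexity} in exactly the same way. This is precisely the content of the paper's Step~5, and it is the reason the paper treats the two directions symmetrically rather than splitting them.

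Your upper-bound construction is closer to correct in spirit, and with the same $t$-scaling device it can be made to work; but once you have implemented that device on both sides, you will have essentially reproduced Steps~1--5, just organized as two inequalities instead of one Vitali-envelope identity. The advantage of the paper's packaging is that the De~Giorgi--Letta and Vitali-envelope machinery absorbs the bookkeeping (subadditivity of $\overline{E}(u,\cdot)$, measurability of the density, passage from balls to arbitrary open sets) once and for all, so that the only analytic work left is the local comparison in Step~5.
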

 As a consequence of Theorem \ref{MainTheorem}, we have
\begin{corollary}\label{CoRoLLaRY---MainTheorem}
Under the hypotheses of Theorem {\rm\ref{MainTheorem}}, if $\mathcal{Q}_\mu G=G$ and if, for each $x\in X$, $\mathcal{Q}_\mu L(x,\cdot)$ is lsc on the interior ${\rm int}(\LL_x)$ of the effective domain $\LL_x$ of $L(x,\cdot)$, then \eqref{Main-Relaxed-InTeGrAl} holds and
\begin{equation}\label{Main-Relaxed-FOrMUlA}
\widehat{\mathcal{Q}_\mu L}(x,\xi)=\left\{
\begin{array}{ll}
\mathcal{Q}_\mu L(x,\xi)&\hbox{if }x\in X\hbox{ and }\xi\in{\rm int}(\LL_x)\\
\lim\limits_{t\to 1^-}\mathcal{Q}_\mu L(x,t\xi)&\hbox{if }x\in X\hbox{ and }\xi\in\partial\LL_x\\
\infty&\hbox{otherwise.}
\end{array}
\right.
\end{equation}
\end{corollary}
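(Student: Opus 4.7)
My plan is to apply Theorem \ref{MainTheorem} and then identify $\widehat{\mathcal{Q}_\mu L}(x,\xi)=\liminf_{t\to 1^-}\mathcal{Q}_\mu L(x,t\xi)$ explicitly in each of the three regions. Four structural facts will be used throughout. First, $\alpha G\leq L\leq \beta(1+G)$ passes through the $\mathcal{Q}_\mu$ operator (which is monotone in the integrand and commutes with additive and multiplicative constants), and together with the hypothesis $\mathcal{Q}_\mu G=G$ this gives $\alpha G(x,\xi)\leq \mathcal{Q}_\mu L(x,\xi)\leq\beta(1+G(x,\xi))$; hence the effective domain of $\mathcal{Q}_\mu L(x,\cdot)$ coincides with $\LL_x=\GG_x$, which is convex by Remark~\ref{Remark-Dom-Lx-is-Convex}. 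Second, \eqref{growth-on-Gx} forces $0\in\operatorname{int}(\LL_x)$ for $\mu$-a.e.\ $x$. Third, I need that $\mathcal{Q}_\mu L$ inherits the ru-usc property \eqref{ru-usc-condition} from $L$ (with an integrable weight); this is the standard transfer obtained by inserting the rescaling $w\mapsto tw$ into \eqref{DefInITioN-Of-Hrho-mu-Lx-xi} and applying \eqref{ru-usc-condition} pointwise, and I would import it from the preparatory material of \S3.3. Fourth, the lsc hypothesis in the corollary supplies the lower semicontinuity of $\mathcal{Q}_\mu L(x,\cdot)$ on $\operatorname{int}(\LL_x)$.

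Fix $x$ with $0\in\operatorname{int}(\LL_x)$. If $\xi\in\operatorname{int}(\LL_x)$, then $t\xi\in\operatorname{int}(\LL_x)$ for every $t$ close to $1$; the lsc of $\mathcal{Q}_\mu L(x,\cdot)$ gives $\mathcal{Q}_\mu L(x,\xi)\leq\liminf_{t\to 1^-}\mathcal{Q}_\mu L(x,t\xi)$ while ru-usc gives the reverse limsup inequality, so the two sides agree and $\widehat{\mathcal{Q}_\mu L}(x,\xi)=\mathcal{Q}_\mu L(x,\xi)$. If $\xi\in\partial\LL_x$, then convexity of $\LL_x$ together with $0\in\operatorname{int}(\LL_x)$ places $t\xi$ in $\operatorname{int}(\LL_x)$ for every $t\in[0,1)$, so $f(t):=\mathcal{Q}_\mu L(x,t\xi)$ is finite on $[0,1)$. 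Given $\varepsilon>0$, ru-usc produces $\delta>0$ such that for $1-\delta<s<t<1$ one has $s/t\in(1-\delta,1)$ and therefore $f(s)=\mathcal{Q}_\mu L(x,(s/t)(t\xi))\leq(1+\varepsilon)f(t)+\varepsilon a(x)$; passing to $\limsup_{s\to 1^-}$ on the left, then $\liminf_{t\to 1^-}$ on the right, and finally $\varepsilon\to 0^+$, yields $\limsup_{s\to 1^-}f(s)\leq\liminf_{t\to 1^-}f(t)$, so the limit exists and equals $\widehat{\mathcal{Q}_\mu L}(x,\xi)$. Finally, if $\xi\notin\overline{\LL_x}$, then $\xi$ has an open neighbourhood disjoint from $\overline{\LL_x}$; hence $t\xi\notin\LL_x=\GG_x$ for $t$ sufficiently close to $1$, giving $G(x,t\xi)=\infty$ and, by the lower $G$-bound on $\mathcal{Q}_\mu L$, $\mathcal{Q}_\mu L(x,t\xi)=\infty$, so $\widehat{\mathcal{Q}_\mu L}(x,\xi)=\infty$. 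Plugging these three cases into \eqref{Main-Relaxed-InTeGrAl} delivers \eqref{Main-Relaxed-FOrMUlA}.

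The main technical obstacle is the third ingredient above, the transfer of ru-usc from $L$ to $\mathcal{Q}_\mu L$, which is what makes the rescaling argument along the line $\{t\xi\}$ work. The boundary case is the subtlest of the three, since there I must upgrade $\liminf$ to a genuine limit; this upgrade is exactly what the ru-usc inequality on $\mathcal{Q}_\mu L$, applied along the trajectory $t\mapsto t\xi$, delivers once one knows $0$ lies in the interior of $\LL_x$.
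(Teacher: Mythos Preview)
Your proof is correct and follows essentially the same route as the paper. The paper invokes Proposition~\ref{stability-of-ru-usc-by-mu-quasiconvexification} (ru-usc transfers from $L$ to $\mathcal{Q}_\mu L$) and then applies Theorem~\ref{Extension-Result-for-ru-usc-Functions}, which packages your three-case analysis as a general statement about ru-usc integrands whose effective domain satisfies $t\overline{\LL_x}\subset{\rm int}(\LL_x)$; your argument is precisely the unpacked version of that theorem, with the minor refinement that you note explicitly the role of $0\in{\rm int}(\LL_x)$ (coming from \eqref{growth-on-Gx}) in securing this inclusion.
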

\begin{proof}
As $\mathcal{Q}_\mu G=G$ we have $\mathcal{Q}_\mu\LL_x=\LL_x$ for all $x\in X$, where $\mathcal{Q}\LL_x$ denotes the effective domain of $\mathcal{Q}_\mu L(x,\cdot)$. Moreover, from \eqref{hyp-convexity} is is easily seen that $\LL_x$ is convex for all $x\in X$, hence \eqref{Homothecie-Assumption-Bis} holds, and \eqref{Main-Relaxed-FOrMUlA} follows from Theorem \ref{Extension-Result-for-ru-usc-Functions}.
\end{proof}


\section{Auxiliary results}

\subsection{The $p$-Cheeger-Sobolev spaces} Let $p>1$ be a real number, let $(X,d,\mu)$ be a metric measure space, where $(X,d)$ is a length space which is separable and compact, and $\mu$ is a positive Radon measure on $X$. We begin with the concept of upper gradient introduced by Heinonen and Koskela (see \cite{heinonen-koskela98}). 
\begin{definition}
A Borel function $g: X\to[0,\infty]$ is said to be an upper gradient for $f: X\to\RR$  if 
$
|f(c(1))-f(c(0))|\leq\int_0^1 g(c(s))ds
$
for all continuous rectifiable curves $c:[0,1]\to  X$. 
\end{definition}
The concept of upper gradient has been generalized by Cheeger as follows (see \cite[Definition 2.8]{cheeger99}). 
\begin{definition}\label{Def-p-weak-upper-gradient}
A function $g\in L^p_\mu( X)$ is said to be a $p$-weak upper gradient for $f\in L^p_\mu( X)$ if there exist $\{f_n\}_n\subset L^p_\mu( X)$ and $\{g_n\}_n\subset L^p_\mu( X)$ such that for each $n\geq 1$, $g_n$ is an upper gradient for $f_n$, $f_n\to f$ in $L^p_\mu( X)$ and $g_n\to g$ in $L^p_\mu( X)$. 
\end{definition}
Denote the algebra of Lipschitz functions from $ X$ to $\RR$ by ${\rm Lip}( X)$. (As $ X$ is compact, every Lipschitz function from $ X$ to $\RR$ is bounded.) From Cheeger and Keith (see \cite[Theorem 4.38]{cheeger99} and \cite[Definition 2.1.1 and Theorem 2.3.1]{keith1-04}) we have

\begin{theorem}\label{cheeger-theorem}
If $\mu$ is doubling, i.e., \eqref{D-meas} holds, and $ X$ supports a weak $(1,p)$-Poincar\'e inequality, i.e., $\eqref{1-p-PI}$ holds, then there exists a countable family $\{( X_k,\xi^k)\}_k$ of $\mu$-measurable disjoint subsets $ X_k$ of $ X$ with $\mu( X\setminus\cup_k X_k)=0$ and of functions $\xi^k=(\xi^k_1,\cdots,\xi^k_{N(k)}): X\to\RR^{N(k)}$ with $\xi^k_i\in{\rm Lip}( X)$ satisfying the following properties{\rm:}
\begin{enumerate}[leftmargin=*]
\item[\rm(a)]  there exists an integer $N\geq 1$ such that $N(k)\in\{1,\cdots, N\}$ for all $k;$ 
\item[\rm(b)] for every $k$ and every $f\in{\rm Lip}( X)$ there is a unique $D_\mu^k f\in L^\infty_\mu( X_k;\RR^{N(k)})$ such that for $\mu$-a.e. $x\in  X_k$,
$$
\lim_{\rho\to 0}{1\over\rho}\|f-f_x\|_{L^\infty_{\mu}(Q_\rho(x))}=0,
$$
where $f_x\in {\rm Lip}( X)$ is given by $f_x(y):=f(x)+D_\mu^k f(x)\cdot(\xi^k(y)-\xi^k(x));$ in particular 
$$
D_\mu^k f_x(y)=D_\mu^k f(x)\hbox{ for $\mu$-a.a. $y\in  X_k$};
$$
\item[\rm(c)] the operator $D_\mu:{\rm Lip}( X)\to L^\infty_\mu( X;\RR^N)$ given by
$$
D_\mu f:=\sum_k \mathds{1}_{X_k}D_\mu^k f,
$$
where $\mathds{1}_{ X_k}$ denotes the characteristic function of $ X_k$, is linear and, for each $f,g\in{\rm Lip}( X)$, one has 
$$
D_\mu(fg)=fD_\mu g+gD_\mu f;
$$
\item[\rm(d)] for every $f\in{\rm Lip}( X)$, $D_\mu f=0$ $\mu$-a.e. on every $\mu$-measurable set where $f$ is constant.
\end{enumerate}
\end{theorem}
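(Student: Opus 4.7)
The plan is to follow Cheeger's original construction, as reformulated by Keith. The starting point is to establish that under doubling and the weak $(1,p)$-Poincar\'e inequality, the pointwise upper Lipschitz constant $\mathrm{Lip}\, f(x):=\limsup_{y\to x}|f(y)-f(x)|/d(x,y)$ is a $p$-integrable upper gradient for every $f\in\mathrm{Lip}(X)$, and in fact controls the minimal $p$-weak upper gradient up to a constant depending only on $C_d$ and $C_P$. This furnishes a pointwise scalar proxy for ``how fast $f$ varies'' at $\mu$-a.e.\ point, and will be the basic tool used to detect infinitesimal linear dependence between candidate chart functions.

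Next I would build the charts $(X_k,\xi^k)$ by a maximality argument. Calling a tuple $\xi=(\xi_1,\dots,\xi_n)\in\mathrm{Lip}(X)^n$ \emph{$\delta$-independent at $x$} whenever $\mathrm{Lip}(\sum_i a_i\xi_i)(x)\geq\delta|a|$ for all $a\in\RR^n$, a measurable exhaustion argument produces a countable disjoint decomposition $\{X_k\}$ of $X$ (modulo $\mu$-null) together with $\delta$-independent tuples $\xi^k$ of maximal length $N(k)$ on each $X_k$. The crucial quantitative input is that $N(k)$ admits an \emph{a priori} bound $N$ depending only on $C_d$ --- this is the main obstacle, since one must rule out arbitrarily many ``independent infinitesimal directions'' at a typical point. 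Cheeger's original argument proceeds via a blow-up combined with volume counting in tangent cones; Keith's reformulation bypasses the blow-up step by invoking the identity $\mathrm{Lip}\,f=\mathrm{lip}\,f$ $\mu$-a.e., which itself follows from the Poincar\'e inequality and allows the dimension bound to be read off directly from the doubling constant.

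Finally, given any $f\in\mathrm{Lip}(X)$, $k\geq 1$, and $\mu$-a.e.\ $x\in X_k$, I would define $D_\mu^k f(x)\in\RR^{N(k)}$ as the unique minimizer of $a\mapsto\mathrm{Lip}(f-\sum_i a_i\xi^k_i)(x)$. Uniqueness is forced by $\delta$-independence, while maximality of the chart forces this minimum to vanish $\mu$-a.e.\ (otherwise the residual function $f-\sum_i a_i\xi^k_i$ could be adjoined to $\xi^k$ on a set of positive measure, contradicting maximality); this is precisely the first-order expansion in (b), and the $L^\infty$ form is extracted via the Lebesgue differentiation theorem, which is available in doubling metric measure spaces. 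Linearity of $D_\mu$ and the Leibniz rule in (c) then follow immediately by expanding $(f+g)_x$ and $(fg)_x$ to first order and invoking uniqueness, and property (d) is a direct consequence of (b) applied to the value taken by $f$ on any $\mu$-measurable set of constancy.
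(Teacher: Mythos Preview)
The paper does not supply a proof of this statement at all: it is quoted as a result of Cheeger and Keith, with the citation ``see \cite[Theorem 4.38]{cheeger99} and \cite[Definition 2.1.1 and Theorem 2.3.1]{keith1-04}'' immediately preceding the statement. Your outline is a faithful sketch of precisely those cited arguments, so in that sense you are aligned with what the paper invokes.

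Two minor remarks on the sketch itself. First, the passage from ``$\mathrm{Lip}\big(f-\sum_i a_i\xi^k_i\big)(x)=0$'' to the $L^\infty$ first-order expansion in (b) is essentially the definition of the pointwise upper Lipschitz constant, not an application of the Lebesgue differentiation theorem; the latter is used instead to ensure measurability and to pass from pointwise statements to $\mu$-a.e.\ ones. Second, the dimension bound $N(k)\leq N=N(C_d)$ is the genuinely deep step, and your one-line summary (``volume counting in tangent cones'' or Keith's $\mathrm{Lip}=\mathrm{lip}$) is accurate as a pointer but hides substantial work; if you were writing this out in full, that is where most of the effort would go.
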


\begin{remark}
Theorem \ref{cheeger-theorem} is true without the assumption that $(X,d)$ is a length space.
\end{remark}

Let ${\rm Lip}( X;\RR^m):=[{\rm Lip( X)}]^m$ and let $\nabla_\mu:{\rm Lip}( X;\RR^m)\to L^\infty_\mu( X;\MM)$ given by 
$$
\nabla_\mu u:=\left(
\begin{array}{c}
D_\mu u_1\\
\vdots\\ 
D_\mu u_m
\end{array}
\right)
\hbox{ with }u=(u_1,\cdots,u_m).
$$

From Theorem \ref{cheeger-theorem}(c) we see that for every $u\in {\rm Lip}( X;\RR^m)$ and every $f\in {\rm Lip}( X)$, one has
\begin{equation}\label{Mu-der-Prod}
\nabla_\mu (fu)=f\nabla_\mu u+D_\mu f\otimes u.
\end{equation}
\begin{definition} 
The $p$-Cheeger-Sobolev space $W^{1,p}_\mu( X;\RR^m)$  is defined as the completion of ${\rm Lip}( X;\RR^m)$ with respect to the norm
\begin{equation}\label{W1pmu-norm}
\|u\|_{W^{1,p}_\mu( X;\RR^m)}:=\|u\|_{L^p_\mu( X;\RR^m)}+\|\nabla_\mu u\|_{L_\mu^p( X;\MM)}.
\end{equation}
\end{definition}
Taking Proposition \ref{Fundamental-Proposition-for-CalcVar-in-MMS}(a) below into account, since $\|\nabla_\mu u\|_{L_\mu^p(X;\MM)}\leq \|u\|_{W^{1,p}_\mu(X;\RR^m)}$ for all $u\in{\rm Lip}(X;\RR^m)$ the linear map $\nabla_\mu$ from ${\rm Lip}(X;\RR^m)$ to $L_\mu^p(X;\MM)$  has a unique extension to $W^{1,p}_\mu(X;\RR^m)$ which will still be denoted by $\nabla_\mu$ and will be called the $\mu$-gradient.

\begin{remark}
When $ X$ is a the closure of a bounded open subset $\Omega$ of $\RR^N$ and $\mu$ is the Lebesgue measure on $\overline{\Omega}$, we retreive the (classical) Sobolev spaces $W^{1,p}( \Omega;\RR^m)$.  If $X$ is a compact manifold $M$ and if $\mu$ is the superficial measure on $M$, we obtain the (classical) Sobolev spaces $W^{1,p}(M;\RR^m)$ on the compact manifold $M$. For more details on the various possible extensions of the classical theory of the Sobolev spaces to the setting of metric measure spaces, we refer to \cite[\S 10-14]{heinonen07} (see also \cite{cheeger99, shanmugalingam00, gol-tro01,hajlasz02}).
\end{remark}

The following proposition (whose proof is given below, see also \cite[Proposition 2.28]{AHM15}) provides useful properties for dealing with calculus of variations in the metric measure setting.

\begin{proposition}\label{Fundamental-Proposition-for-CalcVar-in-MMS}
Under the hypotheses of Theorem {\rm\ref{cheeger-theorem}}, we have{\rm:}
\begin{enumerate}[leftmargin=*]
\item[\rm(a)] the $\mu$-gradient is closable in $W^{1,p}_\mu( X;\RR^m)$, i.e., for every $u\in W^{1,p}_\mu( X;\RR^m)$ and every $A\in\mathcal{O}( X)$, if $u(x)=0$ for $\mu$-a.a. $x\in A$ then $\nabla_\mu u(x)=0$ for $\mu$-a.a. $x\in A;$
\item[\rm(b)] $ X$ supports a $p$-Sobolev inequality, i.e., there exist $C_S>0$ and $\chi\geq 1$ such that
\begin{equation}\label{Poincare-Inequality}
\left(\int_{Q_\rho(x)}|v|^{\chi p}d\mu\right)^{1\over\chi p}\leq \rho C_S\left(\int_{Q_\rho(x)}|\nabla_\mu v|^pd\mu\right)^{1\over p}
\end{equation}
for all $0<\rho\leq \rho_0$, with $\rho_0>0$, and all $v\in W^{1,p}_{\mu,0}(Q_\rho(x);\RR^m)$, where, for each $A\in\mathcal{O}( X)$, $W^{1,p}_{\mu,0}(A;\RR^m)$ is the closure of ${\rm Lip}_0(A;\RR^m)$ with respect to $W^{1,p}_\mu$-norm defined in \eqref{W1pmu-norm} with 
$$
{\rm Lip}_0(A;\RR^m):=\big\{u\in{\rm Lip}( X;\RR^m):u=0\hbox{ on } X\setminus A\big\};
$$
\item[\rm(c)] $ X$ satisfies the Vitali covering theorem, i.e., for every $A\subset  X$ and every family $\mathcal{F}$ of closed balls in $ X$, if $\inf\{\rho>0:\overline{Q}_\rho(x)\in\mathcal{F}\}=0$ for all $x\in A$ then there exists a countable disjointed subfamily $\mathcal{G}$ of $\mathcal{F}$ such that $\mu(A\setminus \cup_{Q\in\mathcal{G}}Q)=0;$ in other words, $A\subset \big(\cup_{Q\in\mathcal{G}}Q\big)\cup N$ with $\mu(N)=0;$
\item[\rm(d)] for every $u\in W^{1,p}_\mu( X;\RR^m)$ and $\mu$-a.e. $x\in  X$ there exists $u_x\in W^{1,p}_\mu( X;\RR^m)$ such that{\rm:}
\begin{eqnarray}
&&\nabla_\mu u_x(y)=\nabla_\mu u(x)\hbox{ for $\mu$-a.a. $y\in  X$};\label{FinALAssuMpTIOnOne}\\
&&\displaystyle\lim_{\rho\to 0}{1\over\rho}\|u-u_x\|_{L^\infty_{\mu}(Q_\rho(x);\RR^m)}=0\hbox{ if }p>\DiM,\label{FinALAssuMpTIOnTwo}
\end{eqnarray}
where $\DiM:={\ln(C_d)\over \ln(2)}$ with $C_d\geq 1$ given by the inequality \eqref{D-meas}{\rm;}
\item[\rm(e)] for every $x\in  X$, every $\rho>0$ and every $\tau\in]0,1[$ there exists a Uryshon function $\varphi\in{\rm Lip}( X)$ for the pair $( X\setminus Q_\rho(x),\overline{Q}_{\tau\rho}(x))\footnote{Given a metric space $( X,d)$, by a Uryshon function from $ X$ to $\RR$ for the pair $( X\setminus V,K)$, where $K\subset V\subset X$ with $K$ compact and $V$ open, we mean a continuous function $\varphi: X\to\RR$ such that $\varphi(x)\in[0,1]$ for all $x\in X$, $\varphi(x)=0$ for all $x\in X\setminus V$ and $\varphi(x)=1$ for all $x\in K$.}$ such that 
$$
\|D_\mu\varphi\|_{L^\infty_\mu( X;\RR^N)}\leq{\theta\over\rho(1-\tau)}
$$
for some $\theta>0;$
\item[\rm(f)] for $\mu$-a.e. $x\in  X$,
\begin{equation}\label{DoublINgAssUMpTiON}
\lim_{\tau\to 1^-}\liminf_{\rho\to0}{\mu(Q_{\tau\rho}(x))\over\mu(Q_\rho(x))}=\lim_{\tau\to 1^-}\limsup_{\rho\to0}{\mu(Q_{\tau\rho}(x))\over\mu(Q_\rho(x))}=1.
\end{equation}
\end{enumerate}
\end{proposition}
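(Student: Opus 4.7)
The six items split naturally into three layers. The purely measure-theoretic facts (c) and (f) depend only on the doubling property of $\mu$, and the cutoff construction (e) depends only on the metric structure together with the boundedness of $|D_\mu\cdot|$ by the pointwise Lipschitz constant. The Sobolev inequality (b) depends on doubling and the weak $(1,p)$-Poincar\'e inequality. The closability (a) uses (e), and the pointwise representative (d) rests on (b), (e) and the comparison \eqref{Equa-Inject-compact}. I would therefore prove the items in the order (c), (e), (f), (b), (a), (d), with (d) as the main obstacle.

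\textbf{Parts (c), (e), (f).} For (c), the standard greedy selection proof of Vitali works verbatim in any doubling metric measure space: from any family $\mathcal{F}$ with arbitrarily small radii at each $x\in A$, inductively extract a maximal disjoint subfamily $\mathcal{G}$ by always picking a ball whose radius is at least half the supremum still available; doubling gives a $5$-times enlargement that covers everything discarded, and a standard $\sigma$-additivity/Borel-Cantelli argument yields $\mu(A\setminus\cup_{Q\in\mathcal{G}}Q)=0$. For (e), the explicit Uryshon function $\varphi(y):=\min\big\{1,\max\big\{0,\tfrac{\rho-d(x,y)}{\rho(1-\tau)}\big\}\big\}$ is $\tfrac{1}{\rho(1-\tau)}$-Lipschitz on $(X,d)$; combining Theorem \ref{cheeger-theorem}(b) with the fact (present in Cheeger's construction) that $|D_\mu f|\le\theta\,\mathrm{Lip}(f)$ $\mu$-a.e.\ for some universal $\theta>0$ gives the desired $L^\infty$ bound. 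For (f), fix $x$ a Lebesgue point of the characteristic functions of countably many annuli; the doubling inequality forces $\mu(Q_\rho(x))\le C_d\mu(Q_{\rho/2}(x))$, so for $\mu$-a.e.\ $x$ the measure of the annulus $Q_\rho(x)\setminus Q_{\tau\rho}(x)$ satisfies $\mu(Q_\rho(x)\setminus Q_{\tau\rho}(x))/\mu(Q_\rho(x))\to 0$ as $\tau\to 1^-$ uniformly in small $\rho$, by a standard argument that only countably many values of the ratio $\tau$ can produce spheres of positive $\mu$-measure on a density set.

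\textbf{Parts (b) and (a).} For (b), I would apply the Hajlasz--Koskela chaining argument: doubling together with the weak $(1,p)$-Poincar\'e inequality \eqref{1-p-PI} yields a $(\chi p,p)$-Sobolev--Poincar\'e inequality on balls, with exponent $\chi=\kappa/(\kappa-p)$ when $p<\kappa$ and any $\chi\ge 1$ when $p\ge\kappa$ (using (f) and \eqref{Equa-Inject-compact}). For $v\in W^{1,p}_{\mu,0}(Q_\rho(x);\RR^m)$, extending $v$ by zero to a slightly larger ball and noting that $|\nabla_\mu v|$ is a $p$-weak upper gradient (after passing from Lipschitz approximations) reduces \eqref{Poincare-Inequality} to the Sobolev--Poincar\'e inequality because the average of $v$ on the enlarged ball is controlled by its zero extension. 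For (a), I would exploit Theorem \ref{cheeger-theorem}(d) for Lipschitz functions as follows: if $u\in W^{1,p}_\mu(X;\RR^m)$ vanishes $\mu$-a.e.\ on $A\in\mathcal{O}(X)$, pick any compact $K\subset A$ and an associated Uryshon function $\varphi$ from (e); approximate $u$ by Lipschitz $u_n$ in $W^{1,p}_\mu$-norm and apply \eqref{Mu-der-Prod} to $(1-\varphi)u_n$, letting $n\to\infty$ and then the support of $\varphi$ exhaust $A$; locality of $D_\mu$ on the set $\{u=0\}$ (Theorem \ref{cheeger-theorem}(d)) forces $\nabla_\mu u=0$ $\mu$-a.e.\ on $K$, and hence on $A$.

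\textbf{Part (d), the main obstacle.} The hard step is to upgrade the first-order Taylor expansion of Theorem \ref{cheeger-theorem}(b) from Lipschitz functions to all of $W^{1,p}_\mu(X;\RR^m)$ when $p>\kappa$. For $u\in{\rm Lip}(X;\RR^m)$, I would define $u_x$ coordinatewise by $(u_i)_x(y):=u_i(x)+D_\mu u_i(x)\cdot(\xi^k(y)-\xi^k(x))$ on the chart $X_k$ containing $x$; then (b) of Theorem \ref{cheeger-theorem} together with $D_\mu(u_i)_x=D_\mu u_i(x)$ $\mu$-a.e.\ gives \eqref{FinALAssuMpTIOnOne} and \eqref{FinALAssuMpTIOnTwo} at every $x\in X_k$ where the approximation holds. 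For general $u\in W^{1,p}_\mu(X;\RR^m)$, approximate $u$ by Lipschitz $u_n$ in $W^{1,p}_\mu$-norm and note that $v_n:=u-u_n$ has $\nabla_\mu v_n\to 0$ in $L^p_\mu$. Applying the Sobolev--Poincar\'e inequality (b) with the exponent-comparison \eqref{Equa-Inject-compact} and $p>\kappa$, one obtains (after a Morrey-type iteration on dyadic balls) a pointwise oscillation estimate
$$
\sup_{y\in Q_\rho(x)}|v_n(y)-v_n(x)|\le C\rho^{1-\kappa/p}\Bigl(\mint_{Q_{\sigma\rho}(x)}|\nabla_\mu v_n|^p d\mu\Bigr)^{1/p},
$$
valid for $\mu$-a.e.\ $x$. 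By a Borel--Cantelli argument on a fast subsequence and the Lebesgue differentiation property furnished by (c) and (f), the $L^\infty$-remainder $\rho^{-1}\|u-u_x\|_{L^\infty_\mu(Q_\rho(x);\RR^m)}$ tends to $0$ at $\mu$-a.e.\ $x$, where $u_x$ is the affine function with slope $\nabla_\mu u(x)$ obtained as the $L^\infty_\mu$-limit of $(u_n)_x$ on small balls around $x$. Putting \eqref{FinALAssuMpTIOnOne} and \eqref{FinALAssuMpTIOnTwo} together concludes (d). The delicate technical point I expect to spend most effort on is justifying that the affine representatives $(u_n)_x$ converge along the chosen subsequence of balls for $\mu$-a.e.\ $x$; this requires combining the maximal-function estimates from (b) with the Vitali-type differentiation given by (c) and (f).
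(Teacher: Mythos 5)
Your outline for (b), (c), and (e) matches the paper's route (the paper cites Cheeger for the Sobolev--Poincar\'e inequality in terms of upper gradients and converts it via the comparison $|D_\mu w|\le\theta g_w$, cites Federer for Vitali, and uses exactly your Uryshon function together with ${\rm Lip}\,\varphi=g_\varphi$ a.e.). The genuine gap is in (f). You assert that \eqref{DoublINgAssUMpTiON} follows from doubling alone by a Lebesgue-point argument plus the observation that only countably many $\tau$ can produce spheres of positive measure. But the obstruction is not spheres: it is that for each small $\rho$ the annulus $Q_\rho(x)\setminus Q_{\tau\rho}(x)$, of relative width $1-\tau$, may carry a fraction of $\mu(Q_\rho(x))$ that does not tend to $0$ as $\tau\to1^-$ uniformly along $\rho\to0$ (note the order of limits in \eqref{DoublINgAssUMpTiON}: $\rho\to0$ first, then $\tau\to1^-$). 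Doubling only yields $\mu(Q_{\tau\rho}(x))/\mu(Q_\rho(x))\ge C_d^{-2}$ for $\tau\ge 1/4$, which is nowhere near $1$, and the annular decay property is known to fail for general doubling metric measure spaces. The paper's proof of (f) uses precisely the hypothesis that $(X,d)$ is a \emph{length space}, through the Colding--Minicozzi~II estimate $\mu(Q_\rho(x)\setminus Q_{\tau\rho}(x))\le 2^\delta(1-\tau)^\delta\mu(Q_\rho(x))$ (see also Cheeger, Proposition 6.12); the paper even remarks that Theorem \ref{cheeger-theorem} holds without the length-space assumption, which is kept exactly for (f) and for Theorem \ref{Embedding-TheO-1}. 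This connectivity input is missing from your argument and cannot be dispensed with.

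Two further weak points. For (a), your cutoff argument does not close: Theorem \ref{cheeger-theorem}(d) gives $D_\mu f=0$ only where a \emph{Lipschitz} $f$ is constant, and neither $u_n$ nor $(1-\varphi)u_n$ is constant on $K$; computing $\nabla_\mu\big((1-\varphi)u_n\big)$ on $\{\varphi=1\}$ via \eqref{Mu-der-Prod} gives $-D_\mu\varphi\otimes u_n\to -D_\mu\varphi\otimes u=0$ there, which says nothing about $\nabla_\mu u=\lim_n\nabla_\mu u_n$. Closability of the completed gradient is a genuine theorem --- the paper invokes the closability result of Franchi, Haj{\l}asz and Koskela --- and is exactly the statement that different Lipschitz approximating sequences cannot produce different limit gradients locally; the product rule alone does not give it. For (d), the paper simply cites Bj\"orn's $L^q$-differentiability theorem for Cheeger--Sobolev functions; your approximation-plus-Morrey scheme produces a H\"older-type oscillation bound for $u-u_n$, but upgrading a Morrey estimate of order $\rho^{1-\DiM/p}$ to the first-order remainder $o(\rho)$ in \eqref{FinALAssuMpTIOnTwo} at $\mu$-a.e.\ $x$ is the entire content of that theorem, and it is the step you explicitly leave open. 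As written, (f), (a) and (d) are not proved.
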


\begin{remark}\label{ReMArK-VItALi-For-OpEN-SEtS}
As $\mu$ is a Radon measure, if $ X$ satisfies the Vitali covering theorem, i.e., Proposition \ref{Fundamental-Proposition-for-CalcVar-in-MMS}(c) holds, then for every $A\in\mathcal{O}( X)$ and every $\eps>0$ there exists a countable family $\{Q_{\rho_i}(x_i)\}_{i\in I}$ of disjoint open balls of $A$ with $x_i\in A$, $\rho_i\in]0,\eps[$ and $\mu(\partial Q_{\rho_i}(x_i))=0$ such that $\mu\big(A\setminus\cup_{i\in I}Q_{\rho_i}(x_i)\big)=0$.
\end{remark}

\begin{proof}[\bf Proof of Proposition \ref{Fundamental-Proposition-for-CalcVar-in-MMS}]
Firstly, $ X$ satisfies the Vitali covering theorem, i.e., the property (c) holds, because $\mu$ is doubling (see \cite[Theorem 2.8.18]{Fed-69}). Secondly, the closability of the $\mu$-gradient in ${\rm Lip}( X;\RR^m)$, given by Theorem \ref{cheeger-theorem}(d), can be extended from ${\rm Lip}( X;\RR^m)$ to $W^{1,p}_\mu( X;\RR^m)$ by using the closability theorem of Franchi, Haj{\l}asz and Koskela (see \cite[Theorem 10]{fran-haj-kos99}). Thus, the property (a) is satisfied. Thirdly,  according to Cheeger (see \cite[\S 4, p. 450]{cheeger99} and also \cite{Haj-Kos-95,hajlaszkoskela00}), since $\mu$ is doubling and $ X$ supports a weak $(1,p)$-Poincar\'e inequality, we can assert that there exist $c>0$ and $\chi>1$ such that for every $0<\rho\leq\rho_0$, with $\rho_0\geq 0$, every $v\in W^{1,p}_{\mu,0}( X;\RR^m)$ and every $p$-weak upper gradient $g\in L^p_\mu( X;\RR^m)$ for $v$,
\begin{equation}\label{Cheeger-Coro-Eq1}
\left(\int_{Q_\rho(x)}|v|^{\chi p}d\mu\right)^{1\over\chi p}\leq \rho c\left(\int_{Q_\rho(x)}|g|^pd\mu\right)^{1\over p}.
\end{equation}
On the other hand, from Cheeger (see \cite[Theorems 2.10 and 2.18]{cheeger99}), for each $w\in W^{1,p}_\mu( X)$ there exists a unique $p$-weak upper gradient for $w$, denoted by $g_w\in L ^p_\mu( X)$ and called the minimal $p$-weak upper gradient for $w$, such that for every  $p$-weak upper gradient $g\in L^p_\mu( X)$ for $w$, $g_w(x)\leq g(x)$ for $\mu$-a.a.  $x\in  X$. Moreover (see \cite[\S 4]{cheeger99} and also \cite[\S B.2, p. 363]{bjorn-bjorn-11}, \cite{bjorn00} and \cite[Remark 2.15]{gong-haj-13}), there exists $\theta\geq 1$ such that for every $w\in W^{1,p}_\mu( X)$ and $\mu$-a.e. $x\in  X$, 
\begin{equation}\label{Equa-Inject-compact-1}
{1\over\theta} g_w(x)\leq|D_\mu w(x)|\leq\theta g_w(x).
\end{equation}
As for $v=(v_i)_{i=1,\cdots,m}\in W^{1,p}_\mu( X;\RR^m)$ we have $\nabla_\mu v=(D_\mu v_i)_{i=1,\cdots,m}$, it follows that 
\begin{equation}\label{Cheeger-Coro-Eq2}
{1\over \theta} |g_v(x)|\leq|\nabla_\mu v(x)|\leq\theta|g_v(x)|
\end{equation}
for $\mu$-a.a. $x\in  X$, where $g_v:=(g_{v_i})_{i=1,\cdots,m}$ is naturally called the minimal $p$-weak upper gradient for $v$. Combining \eqref{Cheeger-Coro-Eq1} with \eqref{Cheeger-Coro-Eq2} we obtain the property (b). Fourthly,  from Bj{\"o}rn (see \cite[Corollary 4.6(ii)]{bjorn00} we see that for every $k$, every $u\in W^{1,p}_\mu( X;\RR^m)$ and $\mu$-a.e. $x\in  X_k$,
$$
\nabla_\mu u_x(y)=\nabla_\mu u(x)\hbox{ for }\mu\hbox{-a.a. }y\in  X_k,
$$
where $u_x\in W^{1,p}_\mu( X;\RR^m)$ is given by 
$$
u_x(y):=u(y)-u(x)-\nabla_\mu u(x)\cdot(\xi^k(y)-\xi^k(x)),
$$
 and if $p>\DiM$ then $u$ is $L^\infty_\mu$-differentiable at $x$, i.e.,
$$
\lim_{\rho\to 0}{1\over\rho}\|u(y)-u_x(y)\|_{L^\infty_\mu(Q_\rho(x);\RR^m)}=0.
$$
Hence the property (d) is verified. Fifthly, given $\rho>0$, $\tau\in]0,1[$ and $x\in  X$, there exists a Uryshon function $\varphi\in {\rm Lip}( X)$ for the pair $( X\setminus Q_\rho(x)), \overline{Q}_{\tau\rho}(x))$ such 
$$
\|{\rm Lip}\varphi\|_{L^\infty_\mu( X)}\leq{1\over\rho(1-\tau)},
$$
where for every $y\in  X$,
$$
{\rm Lip}\varphi(y):=\limsup_{d(y,z)\to0}{|\varphi(y)-\varphi(z)|\over d(y,z)}.
$$
But, since $\mu$ is doubling and $ X$ supports a weak $(1,p)$-Poincar\'e inequality, from Cheeger (see \cite[Theorem 6.1]{cheeger99}) we have ${\rm Lip}\varphi(y)=g_\varphi(y)$ for $\mu$-a.a. $y\in  X$, where $g_{\varphi}$ is the minimal $p$-weak upper gradient for $\varphi$. Hence 
$$
\|D_\mu\varphi\|_{L^\infty_\mu( X;\RR^N)}\leq{\theta\over\rho(1-\tau)}
$$ 
because $|D_\mu\varphi(y)|\leq \theta|g_{\varphi}(y)|$ for $\mu$-a.a. $y\in  X$. Consequently the property (e) holds. Finally, as $(X,d)$ is a length space, from Colding and Minicozzi II (see \cite{colding-minicozzi98} and \cite[Proposition 6.12]{cheeger99}) we can assert that there exists $\delta>0$ such that for every $x\in  X$, every $\rho>0$ and every $\tau\in]0,1[$, 
$$
\mu(Q_\rho(x)\setminus Q_{\tau\rho}(x))\leq 2^\delta(1-\tau)^\delta\mu(Q_\rho(x)),
$$
which implies the property (f).
\end{proof}

\medskip

The following result comes from Haj{\l}asz and Koskela (see \cite[Theorem 5.1]{hajlaszkoskela00} and \cite[Theorem 9.7 and Remark 9.8]{hajlasz02}). 

\begin{theorem}\label{Embedding-TheO-1}
Assume that the assumptions of Theorem {\rm\ref{cheeger-theorem}} hold and let $\DiM:={\ln(C_d)\over \ln(2)}$ where $C_d\geq 1$ is given by the inequality \eqref{D-meas}. If $p>\DiM$ then for every $r>0$ and every $\bar x\in X$ there exists $C(r,\bar x)>0$ such that 
$$
|u(y)-u(z)|\leq C(r,\bar x)d(y,z)^{1-{\DiM\over p}}\left(\int_{Q_r(\bar x)}|\nabla_\mu u|^pd\mu\right)^{1\over p}
$$
for all $u\in W^{1,p}_\mu( X;\RR^m)$ and all $y,z\in Q_r(\bar x)$. 
\end{theorem}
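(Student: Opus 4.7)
The plan is to prove this Morrey--Sobolev type Hölder estimate by the Haj{\l}asz--Koskela dyadic chaining argument, adapted to the Cheeger--Sobolev setting. I first reduce to the scalar case of Lipschitz $u$: density of ${\rm Lip}(X;\RR^m)$ in $W^{1,p}_\mu(X;\RR^m)$ allows one to pass to the limit along an approximating sequence (a.e.\ convergence up to a subsequence on the left-hand side, $L^p_\mu$-convergence of $\nabla_\mu u$ on the right-hand side), and a componentwise treatment handles $m\geq 1$. For Lipschitz $u$, the equivalence $\frac{1}{\theta}g_u\leq|\nabla_\mu u|\leq \theta g_u$ noted in the proof of Proposition \ref{Fundamental-Proposition-for-CalcVar-in-MMS} lets me work with the minimal $p$-weak upper gradient $g:=g_u$ and convert to $|\nabla_\mu u|$ only at the end, at the cost of a harmless multiplicative constant.

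Fix $y,z\in Q_r(\bar x)$ with $d:=d(y,z)>0$, and introduce the dyadic chains $B_k:=Q_{2^{1-k}d}(y)$, $B'_k:=Q_{2^{1-k}d}(z)$ for $k\geq 0$, together with an anchor ball $B^*:=Q_{4d}(y)$ that contains both $B_0$ and $B'_0$. By continuity of Lipschitz $u$, $u(y)=\lim_{k\to\infty} u_{B_k}$ and $u(z)=\lim_{k\to\infty} u_{B'_k}$, so
$$
|u(y)-u(z)|\leq|u_{B^*}-u_{B_0}|+\sum_{k=0}^\infty|u_{B_{k+1}}-u_{B_k}|+|u_{B^*}-u_{B'_0}|+\sum_{k=0}^\infty|u_{B'_{k+1}}-u_{B'_k}|.
$$
Each telescoping difference is controlled by combining the doubling condition \eqref{D-meas} with the weak $(1,p)$-Poincaré inequality \eqref{1-p-PI}: since $B_{k+1}\subset B_k$,
$$
|u_{B_{k+1}}-u_{B_k}|\leq\frac{\mu(B_k)}{\mu(B_{k+1})}\mint_{B_k}|u-u_{B_k}|\,d\mu\leq C_d C_P\,(2^{1-k}d)\left(\mint_{\sigma B_k}g^p\,d\mu\right)^{1/p},
$$
and the two anchor terms $|u_{B^*}-u_{B_0}|$ and $|u_{B^*}-u_{B'_0}|$ are handled analogously by one extra application of \eqref{1-p-PI} on $B^*$ and its subballs.

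To turn the averages $\mint_{\sigma B_k}g^p\,d\mu$ into a bound involving a single fixed integral of $g^p$, I invoke the volume lower bound \eqref{Equa-Inject-compact}. Setting $R:=(1+8\sigma)r$, the bound $d\leq 2r$ ensures $\sigma B_k,\sigma B'_k,\sigma B^*\subset Q_R(\bar x)$, and for every such sub-ball one has $\mu(Q_\rho(x))\geq 4^{-\DiM}(\rho/R)^{\DiM}\mu(Q_R(\bar x))$. Substituting and majorising $\int_{\sigma B_k}g^p\,d\mu\leq\int_{Q_R(\bar x)}g^p\,d\mu$ gives each telescoping term a bound of order $(2^{-k}d)^{1-\DiM/p}\,R^{\DiM/p}\mu(Q_R(\bar x))^{-1/p}(\int_{Q_R(\bar x)}g^p\,d\mu)^{1/p}$; since $p>\DiM$, the geometric series $\sum_k 2^{-k(1-\DiM/p)}$ converges, and summing (over both chains, plus the anchor terms) yields
$$
|u(y)-u(z)|\leq C_1(r,\bar x)\,d(y,z)^{1-\DiM/p}\left(\int_{Q_R(\bar x)}g^p\,d\mu\right)^{1/p}.
$$

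The principal obstacle -- and the place where $p>\DiM$ is genuinely needed -- is the summability of this geometric series, which is what upgrades the pointwise identities $u_{B_k}\to u(y)$ and $u_{B'_k}\to u(z)$ into a quantitative Hölder modulus with the correct exponent $1-\DiM/p$. A secondary technical point is that the bound emerges naturally with the integral over the enlarged ball $Q_R(\bar x)$ rather than $Q_r(\bar x)$; since $R$ is a fixed multiple of $r$, one absorbs this into $C(r,\bar x)$ either by re-running the argument after replacing $r$ with $r/(1+8\sigma)$ at the outset (the statement holds for every $r>0$ with a constant freely depending on $r$ and $\bar x$) or, using that $(X,d)$ is a length space, by invoking the self-improvement of the weak Poincaré inequality towards $\sigma=1$. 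Finally, converting $g_u\leq \theta|\nabla_\mu u|$ and extending from ${\rm Lip}(X;\RR^m)$ to $W^{1,p}_\mu(X;\RR^m)$ by density gives the stated inequality for all $u\in W^{1,p}_\mu(X;\RR^m)$.
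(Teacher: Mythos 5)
Your route is genuinely different from the paper's: the paper does not chain at all, but simply quotes \cite[Theorem 9.7 and Remark 9.8]{hajlasz02} (after observing that $X$ is proper and a length space) to get $|w(y)-w(z)|\leq c\,r^{\DiM/p}d(y,z)^{1-\DiM/p}\big(\mmint_{Q_r(\bar x)}g_w^p\,d\mu\big)^{1/p}$ for scalar $w$, and then only performs the bookkeeping you also do at the end (the volume lower bound \eqref{Equa-Inject-compact} to pass from the mean integral to the integral, the comparison \eqref{Equa-Inject-compact-1} between $g_w$ and $|D_\mu w|$, and the componentwise treatment of $m\geq 1$). Your telescoping argument over $B_k=Q_{2^{1-k}d}(y)$, $B'_k$, $B^*$, using doubling, the weak $(1,p)$-Poincar\'e inequality and the summability of $\sum_k 2^{-k(1-\DiM/p)}$ for $p>\DiM$, is the standard and correct proof of the cited black box, so your version is more self-contained.

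There is, however, a genuine gap exactly at the point you flag as ``secondary'': your chain produces $\big(\int_{Q_R(\bar x)}g^p\,d\mu\big)^{1/p}$ with $R=(1+8\sigma)r$, and neither of your two proposed reductions to $Q_r(\bar x)$ works as stated. Re-running the argument with $r$ replaced by $r/(1+8\sigma)$ yields the estimate only for $y,z$ in the \emph{smaller} ball $Q_{r/(1+8\sigma)}(\bar x)$, which is a different statement from the theorem (which controls all $y,z\in Q_r(\bar x)$ by the energy on that same ball). And self-improving the Poincar\'e inequality to $\sigma=1$ does not help either, because the enlargement does not come only from $\sigma$: your anchor ball $B^*=Q_{4d}(y)$ and the first few $B_k$ have radii up to $8r$ when $d(y,z)$ is comparable to $2r$, so the chain escapes $Q_r(\bar x)$ regardless of $\sigma$. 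Obtaining the same-ball conclusion is precisely the content of the length-space refinement in \cite[Remark 9.8]{hajlasz02} that the paper invokes; to close the gap you would either have to prove that refinement (e.g.\ by chaining along a near-geodesic from $y$ to $z$ with balls whose radii shrink proportionally to the distance to the endpoints, combined with the geodesic-space self-improvement of \eqref{1-p-PI}), or weaken the statement to have $Q_{\lambda r}(\bar x)$ on the right-hand side --- which, incidentally, would still suffice for the paper's only use of the theorem, namely Corollary \ref{L-infty-CSE-1} with $r={\rm diam}(X)$.
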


\begin{proof}[\bf Proof of Theorem \ref{Embedding-TheO-1}]
Since $(X,d,\mu)$ is a compact (and so complete) doubling metric space, $(X,d,\mu)$ is proper, i.e., every closed ball is compact (see \cite[Lemma 4.1.14]{HKST-book-2015}). Moreover, by assumption, $(X,d)$ is a length space, i.e., the distance between any two points equals infimum of lengths of curves connecting the points.  So, from \cite[Theorem 9.7 and Remark 9.8]{hajlasz02} we can assert that there exists $c>0$ such that
\begin{equation}\label{Embedding-TheO-1-EquAtION1}
|w(y)-w(z)|\leq c r^{\DiM\over p}d(y,z)^{1-{\DiM\over p}}\left(\mint_{Q_r(\bar x)}g_w^pd\mu\right)^{1\over p}
\end{equation}
for all $w\in W^{1,p}_\mu( X)$, all $\bar x\in X$, all $r>0$ and all $y,z\in Q_r(\bar x)$, where $g_w\in L^p_\mu(X)$ denotes the minimal $p$-weak upper gradient for $w$.
On the other hand, from \eqref{Equa-Inject-compact} it is easy to see that for every $r>0$ and every $\bar x\in X$  there exists $\theta(r,\bar x)>0$ such that
$$
\mu(Q_{r}(\bar x))\geq \theta(r,\bar x)r^\DiM.
$$
Moreover, by the left inequality in \eqref{Equa-Inject-compact-1} we have $\mmint_{Q_\rho(x)}g_w^pd\mu\leq\alpha^p\mmint_{Q_\rho(x)}|D_\mu w|^pd\mu$. Thus, for each $r>0$, each $\bar x\in X$ and each $y,z\in Q_r(\bar x)$, \eqref{Embedding-TheO-1-EquAtION1} can be rewritten as follows 
$$
|w(y)-w(z)|\leq C(r,\bar x)d(y,z)^{1-{\DiM\over p}}\left(\int_{Q_r(\bar x)}|D_\mu w|^pd\mu\right)^{1\over p}
$$
with $C(r,\bar x)={c\alpha\over\theta(r,\bar x)}>0$. It follows that for every $r>0$ and every $\bar x\in X$, we have 
\begin{eqnarray*}
|u(y)-u(z)|&\leq& C(r,\bar x)d(y,z)^{1-{\DiM\over p}}\sum_{i=1}^m\left(\int_{Q_r(\bar x)}|D_\mu u_i|^pd\mu\right)^{1\over p}\\
&\leq&C(r,\bar x)d(y,z)^{1-{\DiM\over p}}\left(\int_{Q_r(\bar x)}\sum_{i=1}^m|D_\mu u_i|^pd\mu\right)^{1\over p}\\
&=&C(r,\bar x)d(y,z)^{1-{\DiM\over p}}\left(\int_{Q_r(\bar x)}|\nabla_\mu u|^pd\mu\right)^{1\over p}
\end{eqnarray*}
for all $u\in W^{1,p}_\mu(X;\RR^m)$ and all $y,z\in Q_r(\bar x)$, and the proof is complete.
\end{proof}

\medskip

Denote the space of continuous functions from $X$ to $\RR^m$ by $C(X;\RR^m)$. As a consequence of Theorem \ref{Embedding-TheO-1} we have  

\begin{corollary}\label{L-infty-CSE-1}
Under the assumptions of Theorem {\rm\ref{cheeger-theorem}}, if $p>\DiM$ then $W^{1,p}_\mu(X;\RR^m)$ continuously embeds into $C(X;\RR^m)$, i.e.,  $W^{1,p}_\mu(X;\RR^m)\subset C(X;\RR^m)$ and  there exists $K_0>0$ such that 
\begin{equation}\label{L-infty-CSE-1-GoAl}
\|u\|_{C(X;\RR^m)}\leq K_0\|u\|_{W^{1,p}_\mu(X;\RR^m)}
\end{equation}
for all $u\in W^{1,p}_\mu(X;\RR^m)$. Moreover, there exists $K_1>0$ such that
\begin{equation}\label{L-infty-CSE-1-EQUA0}
|u(y)-u(z)|\leq K_1d(y,z)^{1-{\DiM\over p}}\|\nabla_\mu u\|_{L^p_\mu(X;\MM)}
\end{equation}
for all $u\in W^{1,p}_\mu(X;\RR^m)$ and all $y,z\in X$.
\end{corollary}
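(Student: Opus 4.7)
My plan is to obtain both conclusions from a single application of Theorem~\ref{Embedding-TheO-1}, exploiting compactness of $(X,d)$ to cover $X$ by one ball, and then to upgrade the resulting H\"older bound to a sup-norm bound by an elementary averaging argument.

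First I would prove \eqref{L-infty-CSE-1-EQUA0}. Since $(X,d)$ is compact we have $D:={\rm diam}(X)<\infty$; fix any $\bar x_0\in X$ and set $r_0:=D+1$, so that $X\subset Q_{r_0}(\bar x_0)$. Applying Theorem~\ref{Embedding-TheO-1} with this choice of $(\bar x_0,r_0)$ produces a constant $K_1:=C(r_0,\bar x_0)>0$ such that
$$
|u(y)-u(z)|\leq K_1 d(y,z)^{1-{\DiM\over p}}\left(\int_{Q_{r_0}(\bar x_0)}|\nabla_\mu u|^pd\mu\right)^{1\over p}\leq K_1 d(y,z)^{1-{\DiM\over p}}\|\nabla_\mu u\|_{L^p_\mu(X;\MM)}
$$
for every $u\in W^{1,p}_\mu(X;\RR^m)$ and every $y,z\in X$, which is exactly \eqref{L-infty-CSE-1-EQUA0}. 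In particular every $u\in W^{1,p}_\mu(X;\RR^m)$ admits a (uniquely defined) H\"older-continuous representative of exponent $1-{\DiM\over p}$, giving the inclusion $W^{1,p}_\mu(X;\RR^m)\subset C(X;\RR^m)$.

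To establish \eqref{L-infty-CSE-1-GoAl}, I would start from the triangle inequality $|u(y)|\leq |u(z)|+|u(y)-u(z)|$, valid for all $y,z\in X$, average with respect to $z$ over $X$, and combine \eqref{L-infty-CSE-1-EQUA0} with H\"older's inequality:
$$
|u(y)|\leq \mint_X|u|d\mu+\mint_X|u(y)-u(z)|d\mu(z)\leq \mu(X)^{-{1\over p}}\|u\|_{L^p_\mu(X;\RR^m)}+K_1 D^{1-{\DiM\over p}}\|\nabla_\mu u\|_{L^p_\mu(X;\MM)}.
$$
Taking the supremum in $y\in X$ and setting $K_0:=\max\bigl\{\mu(X)^{-{1\over p}},\,K_1 D^{1-{\DiM\over p}}\bigr\}$ yields \eqref{L-infty-CSE-1-GoAl}.

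I do not anticipate any genuine obstacle: Theorem~\ref{Embedding-TheO-1} already encodes the Sobolev--Morrey type estimate that drives the embedding, and compactness of $X$ removes any need to patch together local H\"older estimates into a global one.
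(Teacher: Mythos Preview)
Your proof is correct and follows essentially the same approach as the paper: apply Theorem~\ref{Embedding-TheO-1} with a single ball covering $X$ to obtain the H\"older estimate \eqref{L-infty-CSE-1-EQUA0}, then average the pointwise inequality in $z$ to deduce the sup-norm bound \eqref{L-infty-CSE-1-GoAl}. The only cosmetic differences are your choice $r_0=D+1$ (the paper uses $r={\rm diam}(X)$) and your use of $|u(y)|\leq|u(z)|+|u(y)-u(z)|$ followed by averaging, whereas the paper raises to the $p$-th power first via $|u(y)|^p\leq 2^p(|u(y)-u(z)|^p+|u(z)|^p)$; both lead to the same conclusion with slightly different constants.
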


\begin{proof}[\bf Proof of Corollary \ref{L-infty-CSE-1}]
Applying Theorem \ref{Embedding-TheO-1} with $r={\rm diam}(X)$ and for a fixed $\bar x=x_0\in X$, where ${\rm diam}(X)=\sup\{d(y,z):y,z\in X\}<\infty$ because $(X,d)$ is compact, we see that
\begin{eqnarray}
|u(y)-u(z)|&\leq& C\left({\rm diam}(X),x_0\right)d(y,z)^{1-{\DiM\over p}}\|\nabla_\mu u\|_{L^p_\mu(X;\MM)}\nonumber\\
&\leq&C\left({\rm diam}(X),x_0\right){\rm diam}(X)^{1-{\DiM\over p}}\|\nabla_\mu u\|_{L^p_\mu(X;\MM)}\label{L-infty-CSE-1-EQUA1}
\end{eqnarray}
for all $u\in W^{1,p}_\mu(X;\RR^m)$ and all $y,z\in X$. Hence \eqref{L-infty-CSE-1-EQUA0} holds with $K_1=C\left({\rm diam}(X),x_0\right)$ and every $u\in W^{1,p}_\mu(X;\RR^m)$ is $(1-{\DiM\over p})$-H\"older continuous. In particular, it follows that $W^{1,p}_\mu(X;\RR^m)\subset C(X;\RR^m)$. On the other hand, given any $u\in W^{1,p}_\mu(X;\RR^m)$ and any $y\in X$, we have $|u(y)|^p\leq 2^p\left(|u(y)-u(z)|^p+|u(z)|^p\right)$ for all $z\in X$, and consequently
\begin{equation}\label{L-infty-CSE-1-EQUA2}
\mu(X)^{1\over p}|u(y)|\leq 2^{1+{1\over p}}\left(\int_X |u(y)-u(z)|^pd\mu(z)\right)^{1\over p}+2^{1+{1\over p}}\|u\|_{L^p_\mu(X;\RR^m)}.
\end{equation}
But, by \eqref{L-infty-CSE-1-EQUA1} we have 
\begin{equation}\label{L-infty-CSE-1-EQUA3}
\left(\int_X |u(y)-u(z)|^pd\mu(z)\right)^{1\over p}\leq \mu(X)^{1\over p}C\left({\rm diam}(X),x_0\right){\rm diam}(X)^{1-{\DiM\over p}}\|\nabla_\mu u\|_{L^p_\mu(X;\MM)}.
\end{equation}
Hence, combining \eqref{L-infty-CSE-1-EQUA2} and \eqref{L-infty-CSE-1-EQUA3} we deduce that for every $y\in X$,
\begin{eqnarray*}
|u(y)|&\leq& 2^{1+{1\over p}}C\left({\rm diam}(X),x_0\right){\rm diam}(X)^{1-{\DiM\over p}}\|\nabla_\mu u\|_{L^p_\mu(X;\MM)}+{2^{1+{1\over p}}\over\mu(X)^{1\over p}}\|u\|_{L^p_\mu(X;\RR^m)}\\
&\leq&K_0\|u\|_{W^{1,p}_\mu(X;\RR^m)}
\end{eqnarray*}
with $K_0=\sup\left\{2^{1+{1\over p}}C\left({\rm diam}(X),x_0\right){\rm diam}(X)^{1-{\DiM\over p}},{2^{1+{1\over p}}\over\mu(X)^{1\over p}}\right\}$, and \eqref{L-infty-CSE-1-GoAl} follows.
\end{proof}

\medskip

As a consequence of Corollary \ref{L-infty-CSE-1} we have the following $L^\infty_\mu$-compactness result in the framework of the $p$-Cheeger-Sobolev spaces with $p>\DiM$.

\begin{corollary}\label{L-infty-CSE}
Under the assumptions of Theorem {\rm\ref{cheeger-theorem}}, if $p>\DiM$ and if $u\in W^{1,p}_\mu( X;\RR^m)$ and $\{u_n\}_n\subset W^{1,p}_\mu( X;\RR^m)$ are such that 
\begin{equation}\label{L-infty-CSE-HyPOthesis1}
\lim_{n\to\infty}\|u_n-u\|_{L^p_\mu(X;\RR^m)}=0\hbox{ and }\sup_{n\geq 1}\|\nabla_\mu u_n\|_{L^p_\mu(X;\MM)}<\infty, 
\end{equation}
then, up to a subsequence, 
\begin{equation}\label{L-infty-CSE-GoAL}
\lim_{n\to\infty}\|u_n-u\|_{L^\infty_\mu(X;\RR^m)}=0.
\end{equation}
\end{corollary}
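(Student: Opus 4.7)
My plan is to deduce this from the embedding and Hölder estimate of Corollary~\ref{L-infty-CSE-1} by a standard Arzelà--Ascoli argument, using that $(X,d)$ is compact.

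First, I would observe that the hypotheses \eqref{L-infty-CSE-HyPOthesis1} furnish a uniform bound on $\|u_n\|_{W^{1,p}_\mu(X;\RR^m)}$: the $L^p_\mu$-convergence yields $\sup_n\|u_n\|_{L^p_\mu(X;\RR^m)}<\infty$, and the second condition bounds the gradient norms. Applying the Hölder estimate \eqref{L-infty-CSE-1-EQUA0} of Corollary~\ref{L-infty-CSE-1} to each $u_n\in W^{1,p}_\mu(X;\RR^m)\subset C(X;\RR^m)$ then gives
$$
|u_n(y)-u_n(z)|\leq K_1\, d(y,z)^{1-{\DiM\over p}}\sup_{k\geq 1}\|\nabla_\mu u_k\|_{L^p_\mu(X;\MM)}
$$
for all $y,z\in X$ and all $n$, so the family $\{u_n\}_n$ is uniformly $(1-{\DiM\over p})$-Hölder continuous, hence equicontinuous. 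Combining this with the uniform bound $\|u_n\|_{C(X;\RR^m)}\leq K_0\|u_n\|_{W^{1,p}_\mu(X;\RR^m)}$ from \eqref{L-infty-CSE-1-GoAl} shows the sequence is uniformly bounded as well.

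Since $(X,d)$ is compact, the Arzelà--Ascoli theorem yields a subsequence (not relabelled) and some $v\in C(X;\RR^m)$ such that $u_n\to v$ uniformly on $X$, i.e.\ $\|u_n-v\|_{C(X;\RR^m)}\to 0$. In particular this implies $u_n\to v$ in $L^p_\mu(X;\RR^m)$, and by uniqueness of the $L^p_\mu$-limit we get $u=v$ $\mu$-a.e. Therefore
$$
\|u_n-u\|_{L^\infty_\mu(X;\RR^m)}\leq\|u_n-v\|_{C(X;\RR^m)}\longrightarrow 0,
$$
which gives \eqref{L-infty-CSE-GoAL}.

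There is no substantive obstacle here; the only point to verify is that the two conclusions of Corollary~\ref{L-infty-CSE-1}---the continuous embedding into $C(X;\RR^m)$ and the uniform Hölder estimate---suffice to apply Arzelà--Ascoli, which they do because $(X,d)$ is compact (and hence totally bounded) by assumption. The identification of the uniform limit with $u$ is automatic from uniqueness of limits in $L^p_\mu(X;\RR^m)$.
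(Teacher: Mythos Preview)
Your proof is correct and follows essentially the same approach as the paper: both deduce uniform $W^{1,p}_\mu$-bounds from \eqref{L-infty-CSE-HyPOthesis1}, use Corollary~\ref{L-infty-CSE-1} to obtain uniform boundedness in $C(X;\RR^m)$ and equicontinuity via the H\"older estimate \eqref{L-infty-CSE-1-EQUA0}, and then invoke Arzel\`a--Ascoli. Your version is in fact slightly more explicit than the paper's, which omits the identification of the uniform limit with $u$ via uniqueness of $L^p_\mu$-limits.
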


\begin{proof}[\bf Proof of Corollary \ref{L-infty-CSE}]
From \eqref{L-infty-CSE-HyPOthesis1} we deduce that $\sup_{n\geq 1}\|u_n\|_{W^{1,p}_\mu(X;\RR^m)}<\infty$, and using Corollary \ref{L-infty-CSE-1} we can assert that $\sup_{n\geq 1}\|u_n\|_{C(X;\RR^m)}<\infty$, i.e., $\{u_n\}_n$ is bounded in $C(X;\RR^m)$ with $(X,d)$ a compact metric space. On the other hand, using \eqref{L-infty-CSE-1-EQUA0} it is easy to see that $\{u_n\}_n$ is equicontinuous, and \eqref{L-infty-CSE-GoAL} follows from Arzel\`a-Ascoli's theorem.
\end{proof}

\subsection{The De Giorgi-Letta lemma}

Let $ X=( X,d)$ be a metric space, let $\mathcal{O}( X)$ be the class of open subsets of $ X$ and let $\mathcal{B}( X)$ be the class of Borel subsets of $ X$, i.e., the smallest $\sigma$-algebra containing the open (or equivalently the closed) subsets of $ X$. The following result is due to De Giorgi and Letta (see \cite{degiorgi-letta77} and also \cite[Lemma 3.3.6 p. 105]{buttazzo89}).

\begin{lemma}\label{DeGiorgi-Letta-Lemma}
Let $\mathcal{S}:\mathcal{O}( X)\to[0,\infty]$ be an increasing set function, i.e., $\mathcal{S}(A)\leq \mathcal{S}(B)$ for all $A,B\in\mathcal{O}( X)$ such $A\subset B$, satisfying the following four conditions{\rm:}
\begin{enumerate}[leftmargin=*]
\item[{\rm(a)}] $\mathcal{S}(\emptyset)=0;$
\item[{\rm(b)}] $\mathcal{S}$ is superadditive, i.e., $\mathcal{S}(A\cup B)\geq \mathcal{S}(A)+\mathcal{S}(B)$ for all $A,B\in\mathcal{O}( X)$ such that $A\cap B=\emptyset;$
\item[{\rm(c)}] $\mathcal{S}$ is subadditive, i.e., $\mathcal{S}(A\cup B)\leq \mathcal{S}(A)+\mathcal{S}(B)$ for all $A,B\in\mathcal{O}( X);$
\item[{\rm(d)}] there exists a finite Radon measure $\nu$ on $ X$ such that $\mathcal{S}(A)\leq\nu(A)$ for all $A\in\mathcal{O}( X)$.
\end{enumerate}
Then, $\mathcal{S}$ can be uniquely extended to a finite positive Radon measure on $ X$ which is absolutely continuous with respect to $\nu$.
\end{lemma}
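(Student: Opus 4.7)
The plan is to extend $\mathcal{S}$ from open sets to all Borel sets by outer-regular approximation, then to verify Carathéodory measurability so as to invoke the standard construction of a measure from an outer measure.

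First, I would define
$$
\widetilde{\mathcal{S}}(E):=\inf\bigl\{\mathcal{S}(U):U\in\mathcal{O}(X),\,E\subset U\bigr\}
$$
for every $E\subset X$. Monotonicity and $\widetilde{\mathcal{S}}(\emptyset)=0$ are immediate from (a). Countable subadditivity of $\widetilde{\mathcal{S}}$ follows by approximating each set $E_n$ by an open $U_n\supset E_n$ with $\mathcal{S}(U_n)\leq \widetilde{\mathcal{S}}(E_n)+\varepsilon 2^{-n}$ and iterating (c). Because $\mathcal{S}$ is increasing, $\widetilde{\mathcal{S}}(U)=\mathcal{S}(U)$ for every open $U$. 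Finally, (d) combined with the outer regularity of the Radon measure $\nu$ yields $\widetilde{\mathcal{S}}(E)\leq\nu(E)$ for every $E\subset X$, so $\widetilde{\mathcal{S}}$ is a finite outer measure dominated pointwise by $\nu$.

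The core step is to prove that every open set is Carath\'eodory-measurable. Fix $U\in\mathcal{O}(X)$ and $E\subset X$; by outer approximation it suffices to show, for every open $V\supset E$, that $\mathcal{S}(V)\geq\widetilde{\mathcal{S}}(V\cap U)+\widetilde{\mathcal{S}}(V\setminus U)$. Using the metric structure, introduce the open sets
$$
U_n:=\bigl\{x\in V\cap U:{\rm dist}(x,X\setminus U)>1/n\bigr\},
$$
which increase to $V\cap U$ and whose closures satisfy $\overline{U_n}\subset V\cap U$, so that $V\setminus\overline{U_n}$ is open and disjoint from $U_n$. Since $V\supset U_n\cup(V\setminus\overline{U_n})$, monotonicity and the disjoint-superadditivity (b) give
$$
\mathcal{S}(V)\geq \mathcal{S}\bigl(U_n\cup(V\setminus\overline{U_n})\bigr)\geq \mathcal{S}(U_n)+\mathcal{S}(V\setminus\overline{U_n})\geq \widetilde{\mathcal{S}}(U_n)+\widetilde{\mathcal{S}}(V\setminus U),
$$
the last inequality using $V\setminus U\subset V\setminus\overline{U_n}$. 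To pass from $\widetilde{\mathcal{S}}(U_n)$ to $\widetilde{\mathcal{S}}(V\cap U)$, apply subadditivity of $\widetilde{\mathcal{S}}$ and the domination $\widetilde{\mathcal{S}}\leq\nu$:
$$
\widetilde{\mathcal{S}}(V\cap U)\leq\widetilde{\mathcal{S}}(U_n)+\nu\bigl((V\cap U)\setminus U_n\bigr),
$$
and the error $\nu((V\cap U)\setminus U_n)\to 0$ as $n\to\infty$ because $\nu$ is finite and the sets decrease to $\emptyset$. Letting $n\to\infty$ yields the required Carath\'eodory inequality.

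With measurability of open sets in hand, Carath\'eodory's theorem identifies the Borel $\sigma$-algebra as a subfamily of $\widetilde{\mathcal{S}}$-measurable sets, so the restriction of $\widetilde{\mathcal{S}}$ to $\mathcal{B}(X)$ is a finite positive Borel measure. It extends $\mathcal{S}$ on $\mathcal{O}(X)$, satisfies $\widetilde{\mathcal{S}}\leq\nu$ (hence is absolutely continuous with respect to $\nu$), and inherits Radon regularity from its construction by outer approximation together with the finiteness $\widetilde{\mathcal{S}}(X)\leq\nu(X)<\infty$. Uniqueness is a standard $\pi$--$\lambda$ argument: two finite Borel measures coinciding on the $\pi$-system $\mathcal{O}(X)$ coincide on the generated Borel $\sigma$-algebra. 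The main obstacle is the Carath\'eodory step: one must simultaneously exploit (b) on the properly separated pair $(U_n,V\setminus\overline{U_n})$ and use the auxiliary Radon measure $\nu$ from (d) as a slack to absorb the boundary strip $(V\cap U)\setminus U_n$; neither ingredient alone is sufficient.
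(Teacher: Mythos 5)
The paper does not actually prove this lemma (it is quoted from De~Giorgi--Letta and from Buttazzo's book), so I compare your argument with the standard one, which it essentially follows: a Carath\'eodory outer-measure construction by outer regularization over open sets. The skeleton is right, and the key measurability step --- applying (b) to the separated pair $(U_n,V\setminus\overline{U_n})$ and absorbing the strip $(V\cap U)\setminus U_n$ with $\nu$ --- is correct. There is, however, one genuine gap: the countable subadditivity of $\widetilde{\mathcal{S}}$. Choosing open $U_n\supset E_n$ with $\mathcal{S}(U_n)\le\widetilde{\mathcal{S}}(E_n)+\varepsilon 2^{-n}$ reduces the problem to showing $\mathcal{S}(\bigcup_n U_n)\le\sum_n\mathcal{S}(U_n)$, and ``iterating (c)'' only yields $\mathcal{S}(\bigcup_{n\le N}U_n)\le\sum_{n\le N}\mathcal{S}(U_n)$ for each finite $N$; since $\mathcal{S}$ is not assumed continuous along increasing sequences of open sets, you cannot pass to the limit. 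Without countable subadditivity, $\widetilde{\mathcal{S}}$ is not an outer measure and Carath\'eodory's theorem does not apply, so this is not cosmetic. The fix must invoke (d) and the regularity of the finite Radon measure $\nu$: given the open set $A=\bigcup_nU_n$, pick a compact $K\subset A$ with $\nu(A\setminus K)<\varepsilon$; then $K\subset\bigcup_{n\le N}U_n$ for some $N$, so $A=\big(\bigcup_{n\le N}U_n\big)\cup(A\setminus K)$ with $A\setminus K$ open, and (c) together with $\mathcal{S}(A\setminus K)\le\nu(A\setminus K)$ gives $\mathcal{S}(A)\le\sum_n\mathcal{S}(U_n)+\varepsilon$. (The same device yields the inner regularity $\mathcal{S}(A)=\sup\{\mathcal{S}(B):B\in\mathcal{O}(X),\ \overline{B}\subset A\}$, which is the form in which this point appears in the classical proof.)

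A minor slip, harmless to the argument: the inclusion $\overline{U_n}\subset V\cap U$ is false in general, since the closure of $U_n$ may meet $\partial V$. What is true, and what you actually use, is $\overline{U_n}\subset U$ (provided $X\setminus U\ne\emptyset$; the case $U=X$ is trivial), which guarantees both that $V\setminus U\subset V\setminus\overline{U_n}$ and that $V\setminus\overline{U_n}$ is open and disjoint from $U_n$. The remaining steps --- the domination $\widetilde{\mathcal{S}}\le\nu$ via outer regularity of $\nu$, absolute continuity, Radon regularity of the resulting finite Borel measure, and uniqueness via the $\pi$--$\lambda$ theorem applied to the $\pi$-system $\mathcal{O}(X)$ --- are fine.
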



\subsection{Integral representation of the Vitali envelope of a set function}
What follows was first developed in \cite{boufonmas98,bouchitte-bellieud00} (see also \cite{AHM15-PP}). Here we only give what you need for proving Theorem \ref{MainTheorem}. Let $(X,d)$ be a metric space, let $\mathcal{O}(X)$ be the class of open subsets of $X$ and let $\mu$ be a positive finite Radon measure on $X$.  We begin with the concept of differentiability with respect to $\mu$ of a set function.

\begin{definition}
We say that a set function $\Theta:\mathcal{O}(X)\to\RR$ is differentiable with respect to $\mu$ if
\begin{equation}\label{Vitali-Envelope-EQ1}
d_\mu \Theta(x):=\lim_{\rho\to 0}{\Theta(Q_\rho(x))\over\mu(Q_\rho(x))}
\end{equation}
exists and is finite  for $\mu$-a.e. $x\in X$. 
\end{definition}

\begin{remark}
It is easy to see that the limit in \eqref{Vitali-Envelope-EQ1} exists and is finite if and only if $-\infty<d_\mu^+ \Theta\leq d_\mu^-\Theta<\infty$, where $d^-_\mu\Theta:X\to[-\infty,\infty[$ and $d_\mu^+\Theta:X\to]-\infty,\infty]$ are given by:
\begin{eqnarray}
&&\hskip-14mm\displaystyle d^-_\mu\Theta(x):=\lim_{\rho\to 0}d^-_\mu\Theta(x,\rho)\hbox{ with }d^-_\mu\Theta(x,\rho):=\inf\left\{{\Theta(Q)\over\mu(Q)}: Q\in {\rm Ba}(X,x,\rho)\right\};\label{Lower-Differential-Measure}\\
&&\hskip-14mm\displaystyle d^+_\mu\Theta(x):=\lim_{\rho\to 0}d^+_\mu\Theta(x,\rho)\hbox{ with }d^+_\mu\Theta(x,\rho):=\sup\left\{{\Theta(Q)\over\mu(Q)}: Q\in {\rm Ba}(X,x,\rho)\right\},\label{Upper-Differential-Measure}
\end{eqnarray}
where ${\rm Ba}(X,x,\rho)$ denotes the class of open balls $Q$ of $X$ such that $x\in Q$, ${\rm diam}(Q)\in]0,\rho[$ and $\mu(\partial Q)=0$, where  $\partial Q:=\overline{Q}\setminus Q$. We then have $d_\mu\Theta=d^-_\mu\Theta=d^+_\mu\Theta$.
\end{remark}

\begin{remark}\label{Remark2-ViTaLi-ENVELopE}
In \eqref{Lower-Differential-Measure} and \eqref{Upper-Differential-Measure} we can replace ${\rm Ba}(X,x,\rho)$ by ${\rm Ba}(A,x,\rho)$ whenever $A\in\mathcal{O}(X)$ and $x\in A$.
\end{remark}


For each $\eps>0$ and each $A\in\mathcal{O}(X)$, we denote the class of countable families $\{Q_i:=Q_{\rho_i}(x_i)\}_{i\in I}$ of disjoint open balls of $A$ with $x_i\in A$, $\rho_i={\rm diam}(Q_i)\in]0,\eps[$ and $\mu(\partial Q_i)=0$ such that $\mu(A\setminus\cup_{i\in I}Q_i)=0$ by $\mathcal{V}_\eps(A)$.

\begin{definition}\label{VitaliEnvelopeDef}
Given $\Theta:\mathcal{O}(X)\to\RR$, for each $\eps>0$ we define $\Theta^\eps:\mathcal{O}(X)\to[-\infty,\infty]$ by
\begin{equation}\label{Pre-Vitali-Envelope-DEF}
{\Theta}^\eps(A):=\inf\left\{\sum_{i\in I}\Theta(Q_i):\{Q_i\}_{i\in I}\in \mathcal{V}_\eps(A)\right\}.
\end{equation}
By the Vitali envelope of $\Theta$ we call the set function $\Theta^*:\mathcal{O}(X)\to[-\infty,\infty]$ defined by
\begin{equation}\label{Vitali-Envelope-DEF}
\Theta^*(A):=\sup_{\eps>0}\Theta^\eps(A)=\lim_{\eps\to0}\Theta^\eps(A).
\end{equation}
\end{definition}

The interest of Definition \ref{VitaliEnvelopeDef} comes from the following integral representation result whose proof is given below. 

\begin{theorem}\label{Vitali-Envelope-Prop2}
Let $\Theta:\mathcal{O}(X)\to\RR$ be a set function satisfying the following two conditions{\rm:}
\begin{enumerate}[leftmargin=*]
\item[{\rm (a)}] there exists a finite Radon measure $\nu$ on $X$ which is absolutely continuous with respect to $\mu$ such that $|\Theta(A)|\leq\nu(A)$ for all $A\in\mathcal{O}(X);$
\item[{\rm (b)}] $\Theta$ is subadditive, i.e., $\Theta(A)\leq\Theta(B)+\Theta(C)$ for all $A,B,C\in\mathcal{O}(X)$ with $B,C\subset A$, $B\cap C=\emptyset$ and $\mu(A\setminus B\cup C)=0$.
\end{enumerate}
Then $\Theta$ is differentiable with respect to $\mu$, $d_\mu \Theta\in L^1_\mu(X)$ and
$$
\Theta^*(A)=\int_A d_\mu\Theta(x)d\mu(x)
$$
for all $A\in\mathcal{O}(X)$. 
\end{theorem}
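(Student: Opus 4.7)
The plan is to proceed in three steps: first extend $\Theta^*$ to a signed Radon measure absolutely continuous with respect to $\mu$ via the De Giorgi--Letta lemma (Lemma \ref{DeGiorgi-Letta-Lemma}); second, set up a sandwich $\Theta^\eps\le\Theta\le\Theta^*$ at the level of small balls; third, pinch between the Radon--Nikodym densities of $\Theta^\eps$ and $\Theta^*$ to identify the $\mu$-derivative $d_\mu\Theta$ with the density of $\Theta^*$.

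The central preliminary is that hypothesis (b) together with $|\Theta|\le\nu$ and $\nu\ll\mu$ propagates to countable Vitali packings: for any $\{Q_i\}_{i\in I}\in\mathcal{V}_\eps(A)$ and any $n\ge 1$, apply (b) inductively to $B_n=\bigcup_{i\le n}Q_i$, then once more with $C_n=A\setminus\overline{B_n}$ (both open and disjoint with $\mu(A\setminus(B_n\cup C_n))=\sum_{i\le n}\mu(\partial Q_i)=0$), and send $n\to\infty$ using $|\Theta(C_n)|\le\nu(C_n)\to 0$, to obtain $\Theta(A)\le\sum_i\Theta(Q_i)$. Taking the infimum then the supremum yields $\Theta(A)\le\Theta^\eps(A)\le\Theta^*(A)\le\nu(A)$; on the other hand, if $A$ is an open ball with ${\rm diam}(A)<\eps$ and $\mu(\partial A)=0$, the singleton packing $\{A\}\in\mathcal{V}_\eps(A)$ gives the reverse $\Theta^\eps(A)\le\Theta(A)$. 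Using these inequalities, together with a concatenation argument in which near-optimal packings of two disjoint open subsets of $A$ are glued into a packing of $A$ and the gap controlled by $\nu$, one verifies that $\Theta^*+\nu$ is nonnegative, increasing, super- and sub-additive on $\mathcal{O}(X)$, and dominated by $2\nu$. Lemma \ref{DeGiorgi-Letta-Lemma} then extends $\Theta^*$ uniquely to a signed Radon measure $\ll\mu$, and Radon--Nikodym produces $g\in L^1_\mu(X)$ with $\Theta^*(A)=\int_A g\,d\mu$. Since $X$ satisfies Vitali's covering theorem (Proposition \ref{Fundamental-Proposition-for-CalcVar-in-MMS}(c)), the Lebesgue--Besicovitch theorem gives $g(x)=d_\mu\Theta^*(x)$ for $\mu$-a.e.\ $x$.

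The same extension procedure applied to $\Theta^\eps+\nu$ produces densities $g^\eps\in L^1_\mu(X)$ with $\Theta^\eps(A)=\int_A g^\eps\,d\mu$. Since $\Theta^\eps$ increases to $\Theta^*$ as $\eps\downarrow 0$ (the admissible class $\mathcal{V}_\eps$ shrinks and the infimum grows), $g^\eps\uparrow g$ $\mu$-a.e.\ by monotone convergence. Now for $\mu$-a.e.\ $x$ and every $\rho>0$ with $\mu(\partial Q_\rho(x))=0$---such $\rho$ exist arbitrarily close to $0$ since a finite $\mu$ admits only countably many exceptional $\rho$---choosing any $\eps>{\rm diam}(Q_\rho(x))$ gives $\Theta^\eps(Q_\rho(x))\le\Theta(Q_\rho(x))\le\Theta^*(Q_\rho(x))$. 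Dividing by $\mu(Q_\rho(x))$ and letting $\rho\to 0$ yields $g^\eps(x)\le d_\mu^-\Theta(x)\le d_\mu^+\Theta(x)\le g(x)$ $\mu$-a.e., and sending $\eps\to 0$ closes the sandwich to $d_\mu\Theta(x)=g(x)$ $\mu$-a.e. Hence $d_\mu\Theta\in L^1_\mu(X)$ and $\Theta^*(A)=\int_A d_\mu\Theta\,d\mu$ for every $A\in\mathcal{O}(X)$.

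\emph{Main obstacle.} The technical heart is verifying the subadditivity (and superadditivity) of $\Theta^*$, and similarly of $\Theta^\eps$, needed to invoke De Giorgi--Letta. Given disjoint open $B,C\subset A$ with $\mu(A\setminus(B\cup C))=0$, one must concatenate near-optimal $\eps$-packings of $B$ and $C$ into a packing of $A$ whose $\Theta$-sum approximates $\Theta^\eps(B)+\Theta^\eps(C)$, with a defect absorbable into $\nu$ on a $\mu$-small region. The asymmetry of hypothesis (b) and the careful handling of the $\mu$-null gap between $B\cup C$ and $A$ (via $\nu\ll\mu$) are the subtle points; the rest reduces to fairly standard measure-theoretic bookkeeping.
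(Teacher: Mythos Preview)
Your approach has a genuine gap at precisely the point you flag as the ``main obstacle.'' Lemma~\ref{DeGiorgi-Letta-Lemma} requires $\Theta^*+\nu$ to be \emph{increasing} and \emph{subadditive for all pairs of open sets}, not only disjoint ones. Your concatenation sketch handles only disjoint $B,C\subset A$ with $\mu(A\setminus(B\cup C))=0$; this yields the superadditivity direction (and in fact additivity, since balls in a length space are connected and an $\eps$-packing of $B\cup C$ splits). For overlapping $A,B$, however, an $\eps$-packing of $A\cup B$ need not decompose into packings of $A$ and of $B$ (a ball may be contained in neither), and near-optimal packings of $A$ and of $B$ cannot be merged since they may collide on $A\cap B$ with no evident mechanism to absorb the defect into $\nu(A\cap B)$. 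The monotonicity of $\Theta^*+\nu$ is likewise unverified. A posteriori $\Theta^*$ \emph{is} a signed measure absolutely continuous with respect to $\mu$, but establishing this is essentially the content of the theorem; invoking De Giorgi--Letta here is circular unless you supply an independent argument for full subadditivity, and none is apparent. The parallel extension of each $\Theta^\eps$ (fixed $\eps>0$) to a measure is still more doubtful: $\Theta^\eps$ records a scale-$\eps$ infimum and has no reason to be countably additive.

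The paper's proof avoids De Giorgi--Letta altogether. Using only hypothesis (a) and an Egorov-type uniform-convergence argument, it shows directly that $\Theta^*(A)=\int_A d_\mu^-\Theta\,d\mu$ and that the ``upper'' Vitali envelope $\overline{\Theta}^*$ (supremum rather than infimum over packings) satisfies $\overline{\Theta}^*(A)=\int_A d_\mu^+\Theta\,d\mu$, without ever needing $\Theta^*$ to be a measure as an intermediate step. Hypothesis (b) then enters only to prove $\Theta(Q)\le\Theta^*(Q)$ on balls---exactly your preliminary countable-subadditivity observation---which forces $\overline{\Theta}^*\le\Theta^*$ and hence $\int_X(d_\mu^+\Theta-d_\mu^-\Theta)\,d\mu=0$, giving $d_\mu^-\Theta=d_\mu^+\Theta$ $\mu$-a.e. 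Your sandwich $\Theta^\eps(Q)\le\Theta(Q)\le\Theta^*(Q)$ on small balls is correct and is morally the same as the paper's Step~2; what is missing is a substitute for the paper's Step~1 that does not presuppose the measure property you are trying to establish.
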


As a direct consequence, we have

\begin{corollary}\label{CoroLLary-ViTali-EnVELopE-Theorem}
Let $\Theta:\mathcal{O}(X)\to\RR$ be a set function satisfying the assumptions {\rm (a)} and {\rm (b)} of Theorem {\rm\ref{Vitali-Envelope-Prop2}}. Then $\Theta$ and $\Theta^*$ are differentiable with respect to $\mu$ and $d_\mu\Theta^*=d_\mu\Theta$. 
\end{corollary}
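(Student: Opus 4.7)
The plan is to deduce everything from a single application of Theorem \ref{Vitali-Envelope-Prop2} combined with a Lebesgue-differentiation argument for the absolutely continuous measure obtained. First, since $\Theta$ satisfies the assumptions (a) and (b), Theorem \ref{Vitali-Envelope-Prop2} already yields that $\Theta$ is differentiable with respect to $\mu$, that $d_\mu\Theta\in L^1_\mu(X)$, and that
$$
\Theta^*(A)=\int_A d_\mu\Theta(x)\,d\mu(x)\hbox{ for all }A\in\mathcal{O}(X).
$$
In particular, the set function $A\mapsto\Theta^*(A)$ is (the restriction to open sets of) the signed Radon measure on $X$ of density $d_\mu\Theta$ with respect to $\mu$.

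It remains to establish that $\Theta^*$ is itself differentiable with respect to $\mu$ and that $d_\mu\Theta^*=d_\mu\Theta$ $\mu$-a.e. From the integral representation above, $\Theta^*$ is trivially subadditive in the sense of (b) (it is even countably additive on disjoint open sets modulo $\mu$-null sets), and it satisfies (a) with the finite Radon measure $\nu^*(A):=\int_A|d_\mu\Theta|\,d\mu$, which is absolutely continuous with respect to $\mu$ because $d_\mu\Theta\in L^1_\mu(X)$. Hence $\Theta^*$ also satisfies the hypotheses of Theorem \ref{Vitali-Envelope-Prop2}, and in particular it is differentiable with respect to $\mu$.

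To identify its derivative, I would now invoke the Lebesgue differentiation theorem, which is available in the present setting because $\mu$ is doubling and $X$ satisfies the Vitali covering theorem (see Proposition \ref{Fundamental-Proposition-for-CalcVar-in-MMS}(c)). Applied to the $L^1_\mu$ function $d_\mu\Theta$, it gives, for $\mu$-a.e. $x\in X$,
$$
d_\mu\Theta^*(x)=\lim_{\rho\to 0}\frac{\Theta^*(Q_\rho(x))}{\mu(Q_\rho(x))}=\lim_{\rho\to 0}\mint_{Q_\rho(x)}d_\mu\Theta(y)\,d\mu(y)=d_\mu\Theta(x),
$$
which is exactly the claim. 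The main (and essentially only) point is therefore to notice that once the integral representation of $\Theta^*$ from Theorem \ref{Vitali-Envelope-Prop2} is available, the differentiability of $\Theta^*$ and the equality $d_\mu\Theta^*=d_\mu\Theta$ reduce to differentiation of an absolutely continuous measure, for which the doubling + Vitali framework from Proposition \ref{Fundamental-Proposition-for-CalcVar-in-MMS} suffices.
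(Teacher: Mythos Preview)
Your proof is correct and is essentially the natural argument the paper has in mind: the paper states the corollary as ``a direct consequence'' of Theorem \ref{Vitali-Envelope-Prop2} without giving details, and your write-up supplies exactly those details via the integral representation $\Theta^*(A)=\int_A d_\mu\Theta\,d\mu$ together with Lebesgue differentiation in the doubling/Vitali setting. One minor remark: your second paragraph (verifying that $\Theta^*$ again satisfies (a) and (b)) is redundant, since the Lebesgue differentiation step in your third paragraph already yields both the existence of $d_\mu\Theta^*$ and its identification with $d_\mu\Theta$.
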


\begin{proof}[\bf Proof of Theorem \ref{Vitali-Envelope-Prop2}]
First of all, From (a) we see that $-d_\mu\nu\leq d^-_\mu\Theta\leq d_\mu^+\Theta\leq d_\mu \nu$. Hence $d^-_\mu\Theta, d_\mu^+\Theta\in L^1_\mu(X)$ because $\nu$ is a finite Radon measure which is absolutely continuous with respect to the finite Radon measure $\mu$. So $\lambda^-(A),\lambda^+(A)\in\RR$ for all $A\in\mathcal{O}(X)$, where $\lambda^-,\lambda^+:\mathcal{O}(X)\to\RR$ are given by: 
\begin{trivlist}
\item $\displaystyle \lambda^-(A):=\int_A d^-_\mu\Theta(x)d\mu(x)$;
\item $\displaystyle \lambda^+(A):=\int_A d^+_\mu\Theta(x)d\mu(x)$.
\end{trivlist}
In what follows, we consider $\overline{\Theta}^*:\mathcal{O}(X)\to\RR$ defined by
\begin{equation}\label{Upper-Vitali-Envelope}
\overline{\Theta}^*(A):=\inf_{\eps>0}\sup\left\{\sum_{i\in I}\Theta(Q_i):\{Q_i\}_{i\in I}\in \mathcal{V}_\eps(A)\right\}.
\end{equation}
(It is clear that $\Theta^*\leq \overline{\Theta}^*$. In fact, we are going to prove that under the assumptions (a) and (b) of Theorem \ref{Vitali-Envelope-Prop2} we have $\Theta^*(A)=\overline\Theta^*(A)=\int_A d_\mu\Theta(x)d\mu(x)$ for all $A\in\mathcal{O}(X)$.) We divide the proof into three steps.

\medskip

\paragraph{\bf Step 1: proving that \boldmath$\Theta^*=\lambda^-$\unboldmath\ and \boldmath$\overline{\Theta}^*=\lambda^+$\unboldmath} Define $\theta^-,\theta^+:\mathcal{O}(X)\to\RR$ by:
\begin{trivlist}
\item $\theta^-(A):=\Theta(A)-\lambda^-(A)$;
\item $\theta^+(A):=\Theta(A)-\lambda^+(A)$.
\end{trivlist}
In what follows, $\theta^*$ (resp. $\overline{\theta}^*$) is defined by \eqref{Vitali-Envelope-DEF}  (resp. \eqref{Upper-Vitali-Envelope})  with $\Theta$ replaced by $\theta^-$ (resp. $\theta^+$).
\begin{lemma}\label{Lemma1-Vitali-Envelope-1}
Under the assumption {\rm (a)} of Theorem {\rm \ref{Vitali-Envelope-Prop2}} we have $\theta^*=\overline{\theta}^*=0$.
\end{lemma}
\begin{proof}[\bf Proof of Lemma \ref{Lemma1-Vitali-Envelope-1}]
We only prove that $\theta^*=0$. (The proof of $\overline{\theta}^*=0$ follows from similar arguments and is left to the reader.) 

First of all, from the assumption (a) it is clear that 
\begin{equation}\label{Lemma1-Vitali-Envelope-1-EqUaTiOn1}
|\theta^-(A)|\leq \hat \nu(A)
\end{equation}
for all $A\in\mathcal{O}(X)$, where $\hat\nu:=\nu+|\nu|$ is absolutely continuous with respect to $\mu$ (with $|\nu|$ denoting the total variation of $\nu$).

Secondly, we can assert that 
\begin{equation}\label{Lemma1-Vitali-Envelope-1-EqUaTiOn2}
d^-_\mu\theta^-=0, 
\end{equation}
where for any set function ${\rm s}:\mathcal{O}(X)\to\RR$, the function $d^-_\mu{\rm s}:X\to[-\infty,\infty[$ (resp. $d^+_\mu{\rm s}:X\to]-\infty,\infty]$) is defined by \eqref{Lower-Differential-Measure} (resp. \eqref{Upper-Differential-Measure}) with $\Theta$ replaced by ${\rm s}$. Indeed, for any $x\in X$, it is easily seen that
$$
d^-_\mu\Theta(x,\rho)-d^+_\mu\lambda^-(x,\rho)\leq d^-_\mu\theta^-(x,\rho)\leq d^-_\mu\Theta(x,\rho)-d^-_\mu\lambda^-(x,\rho).
$$
for all $\rho>0$, and letting $\rho\to 0$, we obtain
$$
d^-_\mu\Theta(x)-d^+_\mu\lambda^-(x)\leq d^-_\mu\theta^-(x)\leq d^-_\mu\Theta(x)-d^-_\mu\lambda^-(x).
$$
But $d^-_\mu\lambda^-(x)=d^+_\mu\lambda^-(x)=d^-_\mu\Theta(x)$, hence  $d^-_\mu\theta^-(x)=0$.

Finally, to conclude we prove that \eqref{Lemma1-Vitali-Envelope-1-EqUaTiOn1} and \eqref{Lemma1-Vitali-Envelope-1-EqUaTiOn2} imply $\theta^*=0$. For this, we are going to prove the following two assertions:
\begin{eqnarray}
&&\hbox{if $d^-_\mu\theta^-\leq 0$ then $\theta^*\leq 0$;}\label{Lemma1-Vitali-Envelope-1-EqUaTiOn3}\\
&&\hbox{under \eqref{Lemma1-Vitali-Envelope-1-EqUaTiOn1}, if $d^-_\mu\theta^-\geq 0$ then $\theta^*\geq 0$.}\label{Lemma1-Vitali-Envelope-1-EqUaTiOn4}
\end{eqnarray}
\paragraph{\em Proof of \eqref{Lemma1-Vitali-Envelope-1-EqUaTiOn3}} Fix $A\in\mathcal{O}(X)$. Fix any $\eps>0$. Then $d^-_\mu\theta^-<\eps$, and so in particular $\lim_{\rho\to 0}d^-_\mu\theta^-(x,\rho)<\eps$ for all $x\in A$. Hence, for each $x\in A$ there exists $\{\rho_{x,n}\}_n\subset]0,\eps[$ with $\rho_{x,n}\to0$ as $n\to\infty$ such that $d_\mu^-\theta^-(x,\rho_{x,n})<\eps$ for all $n\geq 1$. Taking Remark \ref{Remark2-ViTaLi-ENVELopE} into account, it follows that for each $x\in A$ and each $n\geq 1$ there is $Q_{x,n}\in {\rm Ba}(A,x,\rho_{x,n})$ such that for each $x\in A$ and each $n\geq 1$,
\begin{equation}\label{Lemma1-Vitali-Envelope-1-EqUaTiOn5}
{\theta^-(Q_{x,n})\over\mu(Q_{x,n})}<\eps.
\end{equation}
Moreover, since ${\rm diam}\big(\overline{Q}_{x,n}\big)={\rm diam}(Q_{x,n})\leq\rho_{x,n}$ for all $x\in A$ and all $n\geq 1$, we have  $\inf\big\{{\rm diam}\big(\overline{Q}_{x,n}\big):n\geq 1\big\}=0$ (where $\overline{Q}_{x,n}$ denotes the closed ball corresponding to the open ball $Q_{x,n}$). Let $\mathcal{F}_0$ be the family of closed balls of $X$ given by
$$
\mathcal{F}_0:=\Big\{\overline{Q}_{x,n}:x\in A\hbox{ and }n\geq 1\Big\}.
$$
As $X$ satisfies the Vitali covering theorem, from the above we deduce that there exists a disjointed countable subfamily $\{\overline{Q}_i\}_{i\in I_0}$ of closed balls of $\mathcal{F}_0$ (with $Q_i\subset A$, $\mu(\partial Q_i)=0$ and ${\rm diam}(Q_i)\in]0,\eps[$) such that $\mu\big(A\setminus\cup_{i\in I_0}\overline{Q}_i\big)=0$, which means that  $\{Q_i\}_{i\in I_0}\in\mathcal{V}_\eps(A)$. From \eqref{Lemma1-Vitali-Envelope-1-EqUaTiOn5} we see that $\theta^-(Q_i)<\eps\mu(Q_i)$ for all $i\in I_0$, hence 
$$
\sum_{i\in I_0}\theta^-(Q_i)\leq \eps\sum_{i\in I_0}\mu(Q_i)=\eps\mu(A).
$$
Consequently
$
\theta^{-,\eps}(A)\leq \eps\mu(A)
$
for all $\eps>0$, where $\theta^{-,\eps}$ is defined by \eqref{Pre-Vitali-Envelope-DEF} with $\Theta$ replaced by $\theta^-$, and letting $\eps\to 0$ we obtain $\theta^*(A)\leq 0$.

\smallskip

\paragraph{\em Proof of \eqref{Lemma1-Vitali-Envelope-1-EqUaTiOn4}} Fix $A\in\mathcal{O}(X)$. By Egorov's theorem, there exists a sequence $\{B_n\}_n$ of Borel subsets of $A$ such that:
\begin{eqnarray}
&& \lim_{n\to\infty}\mu(A\setminus B_n)=0;\label{Lemma1-Vitali-Envelope-1-EqUaTiOn6}\\
&& \lim_{\eps\to 0}\sup_{x\in B_n}\left|d_\mu^-\theta^-(x)-d_\mu^-\theta^-(x,\eps)\right|=0\hbox{ for all }n\geq 1.\label{Lemma1-Vitali-Envelope-1-EqUaTiOn7}
\end{eqnarray}
As $\hat\nu$ is absolutely continuous with respect to $\mu$, by \eqref{Lemma1-Vitali-Envelope-1-EqUaTiOn6} we have 
\begin{equation}\label{Lemma1-Vitali-Envelope-1-EqUaTiOn8}
\lim_{n\to\infty}\hat \nu(A\setminus B_n)=0.
\end{equation}
Moreover, as $d_\mu^-\theta^-\geq 0$, from \eqref{Lemma1-Vitali-Envelope-1-EqUaTiOn7} we deduce that
\begin{equation}\label{Lemma1-Vitali-Envelope-1-EqUaTiOn9}
\liminf_{\eps\to 0}\inf_{x\in B_n} d_\mu^-\theta^-(x,\eps)\geq 0\hbox{ for all }n\geq 1.
\end{equation}
Fix any $n\geq 1$ and any $\eps>0$. By definition of ${\rm \theta}^{-,\eps}$, there exists $\{Q_i\}_{i\in I}\in\mathcal{V}_\eps(A)$ such that
\begin{equation}\label{Lemma1-Vitali-Envelope-1-EqUaTiOn10}
\theta^{-,\eps}(A)>\sum_{i\in I}\theta^-(Q_i)-\eps.
\end{equation}
Set $I_n:=\big\{i\in I:Q_i\cap B_n\not=\emptyset\big\}$. Using \eqref{Lemma1-Vitali-Envelope-1-EqUaTiOn1} we have
\begin{eqnarray*}
\sum_{i\in I}\theta^-(Q_i)=\sum_{i\in I_n}\theta^-(Q_i)+\sum_{i\in I\setminus I_n}\theta^-(Q_i)&\geq& \sum_{i\in I_n}\theta^-(Q_i)-\sum_{i\in I\setminus I_n}\hat \nu(Q_i)\\
&\geq& \sum_{i\in I_n}{\theta^-(Q_i)\over\mu(Q_i)}\mu(Q_i)-\hat\nu\left(\cupp_{i\in I\setminus I_n}Q_i\right),
\end{eqnarray*}
and, choosing $x_i\in Q_i\cap B_n$ for each $i\in I_n$ and noticing that $\cupp_{i\in I\setminus I_n}Q_i\subset A\setminus B_n$,  it follows that 
\begin{eqnarray*}
 \sum_{i\in I}\theta^-(Q_i)&\geq& \sum_{i\in I_n}d_\mu^-\theta^-(x_i,\eps)\mu(Q_i)-\hat\nu(A\setminus B_n)\\
 &\geq&\inf_{x\in B_n} d_\mu^-\theta^-(x,\eps)\sum_{i\in I_n}\mu(Q_i)-\hat\nu(A\setminus B_n).
\end{eqnarray*}
Taking \eqref{Lemma1-Vitali-Envelope-1-EqUaTiOn10} into account, we conclude that
$$
\theta^{-,\eps}(A)\geq \inf_{x\in B_n} d_\mu^-\theta^-(x,\eps)\sum_{i\in I_n}\mu(Q_i)-\hat\nu(A\setminus B_n)-\eps
$$
for all $\eps>0$ and all $n\geq 1$, which gives $\theta^*(A)\geq 0$ by letting $\eps\to 0$ and using \eqref{Lemma1-Vitali-Envelope-1-EqUaTiOn9} and then by letting $n\to \infty$ and using \eqref{Lemma1-Vitali-Envelope-1-EqUaTiOn8}.
\end{proof}

\smallskip

As $\lambda^-$ and $\lambda^+$ are absolutely continuous with respect to $\mu$, it is easy to see that:
\begin{trivlist}
\item $\theta^*=\Theta^*-\lambda^-$;
\item $\overline{\theta}^*=\overline{\Theta}^*-\lambda^+$.
\end{trivlist}
Hence $\Theta^*=\lambda^-$ and $\overline{\Theta}^*=\lambda^+$ by Lemma \ref{Lemma1-Vitali-Envelope-1}.
\medskip

\paragraph{\bf Step 2: proving that \boldmath$\Theta^*=\overline{\Theta}^*$\unboldmath} We only need to prove that $\overline{\Theta}^*\leq \Theta^*$. For this, it is sufficient to show that for each open ball $Q$ of $X$ with $\mu(\partial Q)=0$, one has
\begin{equation}\label{Lemma1-Vitali-Envelope-1-EqUaTiOn11}
\Theta(Q)\leq\Theta^*(Q).
\end{equation}
Fix any $\eps >0$. By definition of $\Theta^\eps$, there exists $\{Q_i\}_{i\in I}\in \mathcal{V}_\eps(Q)$ such that
\begin{equation}\label{Lemma1-Vitali-Envelope-1-EqUaTiOn12}
\sum_{i\in I}\Theta(Q_i)\leq \Theta^\eps(Q)+\eps.
\end{equation}
Since $\mu\big(Q\setminus \cup_{i\in I}Q_i\big)=0$ there is a sequence $\{I_n\}_n$ of finite subsets of $I$ such that
\begin{equation}\label{Lemma1-Vitali-Envelope-1-EqUaTiOn13}
\lim_{n\to\infty}\mu\left(Q\setminus \cupp_{i\in I_n}Q_i\right)=\lim_{n\to\infty}\mu\left(\cupp_{i\in I\setminus I_n}Q_i\right)=0.
\end{equation}
Fix any $n\geq 1$. As $\Theta$ is subadditive, see the assumption (b), we have
$$
\Theta\left(\cupp_{i\in I_n}Q_i\right)\leq \sum_{i\in I_n}\Theta(Q_i).
$$
Moreover, $\mu\left(Q\setminus\big[(\cup_{i\in I_n}Q_i)\cup(Q\setminus\overline{\cup_{i\in I_n}Q_i})\big]\right)=0$ because $\mu(\partial Q_i)=0$ for all $i\in I_n$, hence
$$
\Theta(Q)\leq \Theta\left(\cupp_{i\in I_n}Q_i\right)+\Theta\left(Q\setminus\overline{\cupp_{i\in I_n}Q_i}\right)
$$
by using again the subadditivity of $\Theta$, and consequently
$$
\sum_{i\in I_n}\Theta(Q_i)\geq \Theta(Q)-\Theta\left(Q\setminus\overline{\cupp_{i\in I_n}Q_i}\right).
$$
Thus, using the assumption (a), we get
\begin{eqnarray*}
\sum_{i\in I}\Theta(Q_i)&=&\sum_{i\in I\setminus I_n}\Theta(Q_i)+\sum_{i\in I_n}\Theta(Q_i)\\
&\geq &\sum_{i\in I\setminus I_n}\Theta(Q_i)+\Theta(Q)-\Theta\left(Q\setminus\overline{\cupp_{i\in I_n}Q_i}\right)\\
&\geq&\Theta(Q)-\nu\left(\cupp_{i\in I\setminus I_n}Q_i\right)-\nu\left(Q\setminus\overline{\cupp_{i\in I_n}Q_i}\right).
\end{eqnarray*}
But, $\nu(\partial Q_i)=0$ for all $i\in I_n$ because $\nu$ is absolutely with respect to $\mu$, hence
$$
\nu\left(Q\setminus\overline{\cupp_{i\in I_n}Q_i}\right)=\nu\left(Q\setminus\cupp_{i\in I_n}Q_i\right)=\nu\left(\cupp_{i\in I\setminus I_n}Q_i\right),
$$
and so
\begin{equation}\label{Lemma1-Vitali-Envelope-1-EqUaTiOn14}
\sum_{i\in I}\Theta(Q_i)\geq \Theta(Q)-2\nu\left(\cupp_{i\in I\setminus I_n}Q_i\right).
\end{equation}
Combining \eqref{Lemma1-Vitali-Envelope-1-EqUaTiOn12} with \eqref{Lemma1-Vitali-Envelope-1-EqUaTiOn14} we conclude that
$$
\Theta(Q)\leq\Theta^\eps(Q)+2\nu\left(\cupp_{i\in I\setminus I_n}Q_i\right)+\eps,
$$
and \eqref{Lemma1-Vitali-Envelope-1-EqUaTiOn11} follows by letting $n\to\infty$ and using \eqref{Lemma1-Vitali-Envelope-1-EqUaTiOn13} and then by letting $\eps\to 0$.
\medskip

\paragraph{\bf Step 3: end of the proof} From steps 1 and 2 we have
$$
\int_X d_\mu^- \Theta(x)d\mu(x)=\Theta^*(X)=\overline{\Theta}^*(X)=\int_X d_\mu^+ \Theta(x)d\mu(x).
$$
Thus $\int_X(d_\mu^+\Theta(x)-d_\mu^-\Theta(x))d\mu(x)=0$. But $d_\mu^+\Theta\geq d_\mu^-\Theta$, i.e., $d_\mu^+\Theta- d_\mu^-\Theta\geq 0$, hence $d_\mu^+\Theta- d_\mu^-\Theta=0$, i.e., $d_\mu^+\Theta=d_\mu^-\Theta$, and the proof of Theorem \ref{Vitali-Envelope-Prop2} is complete.
\end{proof}


\subsection{Ru-usc integrands}

Let $(X,d,\mu)$ be a metric space measure, where $\mu$ is a positive Radon measure on $X$, and let $L:X\times\MM\to[0,\infty]$ be a Borel measurable integrand.  For each $x\in X$, we denote the effective domain of $L(x,\cdot)$ by $\LL_x$ and, for each $a\in L^1_{\rm loc,\mu}(X;]0,\infty])$, we define $\Delta_{L}^a:[0,1]\to]-\infty,\infty]$ by
 $$
 \Delta_{L}^a(t):=\sup_{x\in X}\sup_{\xi\in \LL_x}{L(x,t\xi)-L(x,\xi)\over a(x)+L(x,\xi)}.
 $$
 (When $(X,d)$ is compact, $L^1_{\rm loc,\mu}(X;]0,\infty])$ can be replaced by $L^1_{\mu}(X;]0,\infty])$.)
 \begin{definition}\label{ru-usc-Def}
 We say that $L$ is radially uniformly upper semicontinuous (ru-usc) if there exists $a\in L^1_{\rm loc,\mu}(X;]0,\infty])$ such that
 $$
 \limsup_{t\to 1^-}\Delta^a_L(t)\leq 0.
 $$

 \end{definition}
The concept of ru-usc integrand was introduced in \cite{oah10} and then developed in \cite{AHM11,oah-jpm-RM,oah-jpm12,jpm13,AHM14,AHMZ15}. (Recently, this concept was used by Duerinckx and Gloria in \cite{duerinckx-gloria16} to deal with stochastic homogenization of unbounded nonconvex integrals with convex growth.)
 
 \begin{remark}\label{ReMaRk-Delta-ru-usc-remARK1}
 If $L$ is ru-usc then $\limsup_{t\to1^-}L(x,t\xi)\leq L(x,\xi)$ for all $x\in X$ and all $\xi\in\LL_x$. On the other hand, if there exist $x\in X$ and $\xi\in\LL_x$ such that $L(x,\cdot)$ is lsc at $\xi$ then, for each $a\in L^1_{\rm loc}(U;]0,\infty])$, $\liminf_{t\to 1^-}\Delta^a_L(t)\geq 0$, and so if in addition $L$ is ru-usc then $\lim_{t\to 1^-}\Delta^a_L(t)=0$ for some $a\in L^1_{\rm loc,\mu}(X;]0,\infty])$.
 \end{remark}
 
 \begin{remark}
 If, for every $x\in X$, $L(x,\cdot)$ is convex and $0\in\LL_x$, then $L$ is ru-usc.
 \end{remark}

The interest of Definition \ref{ru-usc-Def} comes from the following theorem. (For a proof we refer to \cite[Theorem 3.5]{AHM11}.)

\begin{theorem}\label{Extension-Result-for-ru-usc-Functions}
If $L$ is ru-usc and if for every $x\in X$, 
\begin{equation}\label{Homothecie-Assumption-Bis}
t\overline{\LL}_x\subset{\rm int}(\LL_x)\hbox{ for all }t\in]0,1[
\end{equation}
 and $L(x,\cdot)$ is lsc on ${\rm int}(\LL_x)$, where ${\rm int}(\LL_x)$ denotes the interior of $\LL_x$, then{\rm:}
\begin{enumerate}[leftmargin=*]
\item[\rm(a)] $
\widehat{L}(x,\xi)=\left\{
\begin{array}{ll}
L(x,\xi)&\hbox{if }\xi\in{\rm int}(\LL_x)\\
\lim\limits_{t\to 1^-}L(x,t\xi)&\hbox{if }\xi\in\partial\LL_x\\
\infty&\hbox{otherwise{\rm;}}
\end{array}
\right.
$
\item[\rm(b)] $\widehat{L}$ is ru-usc{\rm;}
\item[\rm(c)] for every $x\in X$, $\widehat{L}(x,\cdot)$ is the lsc envelope of $L(x,\cdot)$.
\end{enumerate}
\end{theorem}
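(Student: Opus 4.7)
The proof splits naturally according to the three cases in (a); once the formula in (a) is established, parts (b) and (c) follow from that formula together with a single algebraic inequality derived from the ru-usc property.

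\textbf{Part (a).} Fix $x\in X$. For $\xi\in{\rm int}(\LL_x)$, the homothetie assumption \eqref{Homothecie-Assumption-Bis} and openness of ${\rm int}(\LL_x)$ give $t\xi\in{\rm int}(\LL_x)$ for $t$ close to $1^-$. Ru-usc yields $L(x,t\xi)\leq L(x,\xi)+\Delta_L^a(t)(a(x)+L(x,\xi))$ so $\limsup_{t\to 1^-}L(x,t\xi)\leq L(x,\xi)$, while lsc of $L(x,\cdot)$ at $\xi$ gives $\liminf_{t\to 1^-}L(x,t\xi)\geq L(x,\xi)$; hence $\widehat{L}(x,\xi)=L(x,\xi)$. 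For $\xi\not\in\overline{\LL}_x$, the complement of $\overline{\LL}_x$ is open so $t\xi\not\in\LL_x$ for $t$ near $1$, forcing $L(x,t\xi)=\infty$ and $\widehat{L}(x,\xi)=\infty$. The substantive case is $\xi\in\partial\LL_x$: by \eqref{Homothecie-Assumption-Bis}, $t\xi\in{\rm int}(\LL_x)\subset\LL_x$ for every $t\in\,]0,1[$, so one may apply ru-usc to the element $s\xi\in\LL_x$ with $0<t\leq s<1$ and the multiplier $t/s\in\,]0,1]$, obtaining
$$
L(x,t\xi)\leq L(x,s\xi)+\Delta_L^a(t/s)\bigl(a(x)+L(x,s\xi)\bigr).
$$
The key step is to deduce that $\lim_{t\to 1^-}L(x,t\xi)$ exists. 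Set $\ell_\ast:=\liminf_{t\to 1^-}L(x,t\xi)$ and $L_\ast:=\limsup_{t\to 1^-}L(x,t\xi)$. Given $\eps>0$ pick $\delta>0$ so that $\Delta_L^a(u)<\eps$ for $u\in\,]1-\delta,1[$. Taking the infimum in $s\in\,]t,1[$ with $t/s\in\,]1-\delta,1[$ in the displayed inequality, rearranging, and passing to $\limsup_{t\to 1^-}$ on the resulting inequality $\inf_{s\in\,]t,1[}L(x,s\xi)\geq(L(x,t\xi)-\eps a(x))/(1+\eps)$ gives $\ell_\ast\geq(L_\ast-\eps a(x))/(1+\eps)$; letting $\eps\to 0$ yields $L_\ast\leq\ell_\ast$, so the limit exists (possibly $+\infty$) and equals $\widehat{L}(x,\xi)$.

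\textbf{Part (b).} Using (a) together with \eqref{Homothecie-Assumption-Bis}, for any $\xi\in\widehat{\LL}_x\subset\overline{\LL}_x$ and $t\in\,]0,1[$ one has $t\xi\in{\rm int}(\LL_x)$, hence $\widehat{L}(x,t\xi)=L(x,t\xi)$. Applying the inequality above with the same $t$ and $s\in\,]t,1[$ and letting $s\to 1^-$ (using the existence of $\lim_{s\to 1^-}L(x,s\xi)=\widehat{L}(x,\xi)$ proved in (a)) gives
$$
\widehat{L}(x,t\xi)-\widehat{L}(x,\xi)\leq\delta(t)\bigl(a(x)+\widehat{L}(x,\xi)\bigr),\qquad\delta(t):=\limsup_{u\to t^+}\Delta_L^a(u).
$$
Taking the double supremum over $x$ and $\xi\in\widehat{\LL}_x$ yields $\Delta_{\widehat{L}}^a(t)\leq\delta(t)$, and since $\delta(t)\to 0$ as $t\to 1^-$ by ru-usc of $L$, $\widehat{L}$ is ru-usc with the same $a$.

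\textbf{Part (c).} Denote by $\overline{L}(x,\cdot)$ the lsc envelope of $L(x,\cdot)$. First, $\widehat{L}\leq L$: on ${\rm int}(\LL_x)$ this is equality by (a); on $\partial\LL_x\cap\LL_x$ it follows from the ru-usc inequality $L(x,t\xi)\leq L(x,\xi)+\Delta_L^a(t)(a(x)+L(x,\xi))$ by passing to $\lim_{t\to 1^-}$; elsewhere $L(x,\xi)=\infty$. Next, $\widehat{L}(x,\cdot)$ is lsc: for $\eta_n\to\xi$ and any $t\in\,]0,1[$ the inequality from (b), namely
$$
L(x,t\eta_n)\leq(1+\delta(t))\widehat{L}(x,\eta_n)+\delta(t)a(x),
$$
is trivial when $\eta_n\not\in\widehat{\LL}_x$; taking $\liminf_n$ and using that $t\eta_n\to t\xi\in{\rm int}(\LL_x)$ together with lsc of $L(x,\cdot)$ there gives $L(x,t\xi)\leq(1+\delta(t))\liminf_n\widehat{L}(x,\eta_n)+\delta(t)a(x)$; letting $t\to 1^-$ and invoking (a) yields $\widehat{L}(x,\xi)\leq\liminf_n\widehat{L}(x,\eta_n)$ (for $\xi\notin\overline{\LL}_x$ use that the complement is open so $\eta_n\notin\widehat{\LL}_x$ eventually). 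Thus $\widehat{L}(x,\cdot)$ is a lsc minorant of $L(x,\cdot)$, whence $\widehat{L}\leq\overline{L}$. The reverse inequality is immediate by testing with $\eta=t\xi$: $\overline{L}(x,\xi)\leq\liminf_{t\to 1^-}L(x,t\xi)=\widehat{L}(x,\xi)$.

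\textbf{Main obstacle.} The only delicate point is the existence of $\lim_{t\to 1^-}L(x,t\xi)$ on $\partial\LL_x$ in part (a): lsc of $L(x,\cdot)$ is assumed only on ${\rm int}(\LL_x)$, so one cannot directly control the behaviour of $L(x,t\xi)$ as $t\to 1^-$ from lower-semicontinuity alone. The two-parameter trick (writing $t\xi=(t/s)(s\xi)$ with $s\xi\in{\rm int}(\LL_x)$ and exploiting $\Delta_L^a(t/s)\to 0$) is exactly what couples the ru-usc hypothesis with the homothetie assumption \eqref{Homothecie-Assumption-Bis} to close the $\limsup$-$\liminf$ gap.
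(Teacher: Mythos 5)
The paper gives no proof of this theorem at all --- it defers to \cite[Theorem 3.5]{AHM11} --- so there is no in-paper argument to compare against; your proof is correct and follows essentially the standard route of that reference: the three-case analysis in (a), with the two-parameter estimate $L(x,t\xi)\le L(x,s\xi)+\Delta^a_L(t/s)\big(a(x)+L(x,s\xi)\big)$ (legitimate because $s\xi\in{\rm int}(\LL_x)\subset\LL_x$ by \eqref{Homothecie-Assumption-Bis}) doing exactly the work of closing the $\limsup$--$\liminf$ gap on $\partial\LL_x$, and (b), (c) then read off from the formula in (a). The only loose ends are edge cases already latent in the statement itself (e.g.\ points where $a(x)=\infty$, or $1+\delta(t)=0$ in your part (c)), which the literature also passes over silently, so I do not count them against you.
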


For each $\rho>0$, let $\mathcal{H}^\rho_\mu L:X\times\MM\to[0,\infty]$ be given by
$$
\mathcal{H}^\rho_\mu L(x,\xi):=\inf\left\{\mint_{Q_\rho(x)}L(y,\xi+\nabla_\mu w(y))d\mu(y):w\in W^{1,p}_{\mu,0}(Q_\rho(x);\RR^m)\right\}.
$$
Then, we have
$$
\mathcal{Q}_\mu L(x,\xi):=\limsup_{\rho\to 0}\mathcal{H}^\rho_\mu L(x,\xi)
$$
for all $x\in X$ and all $\xi\in\MM$. The following proposition shows that ru-usc is conserved under $\mu$-quasiconvexification.

\begin{proposition}\label{stability-of-ru-usc-by-mu-quasiconvexification}
If $L$ is ru-usc then $\mathcal{Q}_\mu L$ is ru-usc.
\end{proposition}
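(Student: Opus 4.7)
The plan is to show that the same function $a$, after a harmless modification on a $\mu$-null set (call the result $\tilde a$), witnesses the ru-usc property of $\mathcal{Q}_\mu L$, and more precisely that $\Delta^{\tilde a}_{\mathcal{Q}_\mu L}(t) \leq \Delta^a_L(t)$ for every $t\in\,]0,1[$. The key observation driving the argument is the linear scaling identity $t\xi + \nabla_\mu(tw)(y) = t(\xi + \nabla_\mu w(y))$: if $w\in W^{1,p}_{\mu,0}(Q_\rho(x);\RR^m)$ is admissible in the infimum defining $\mathcal{H}^\rho_\mu L(x,\xi)$, then $tw$ is automatically admissible in the infimum defining $\mathcal{H}^\rho_\mu L(x,t\xi)$.

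First I would apply the pointwise ru-usc inequality
$$L(y,t\eta) \leq L(y,\eta) + \Delta^a_L(t)\bigl(a(y) + L(y,\eta)\bigr)$$
(which is trivially satisfied when $\eta\notin\LL_y$, both sides being $+\infty$) to $\eta = \xi + \nabla_\mu w(y)$ and average over $Q_\rho(x)$. Using $tw$ as competitor for $\mathcal{H}^\rho_\mu L(x,t\xi)$ and then taking the infimum over $w$ requires the coefficient $1+\Delta^a_L(t)$ to be nonnegative, but this is automatic: the trivial bound $L(y,t\xi)-L(y,\xi)\geq -L(y,\xi) \geq -(a(y)+L(y,\xi))$ used inside the supremum defining $\Delta^a_L(t)$ gives $\Delta^a_L(t)\geq -1$. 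One obtains
$$\mathcal{H}^\rho_\mu L(x,t\xi) \leq \bigl(1+\Delta^a_L(t)\bigr)\mathcal{H}^\rho_\mu L(x,\xi) + \Delta^a_L(t)\mint_{Q_\rho(x)} a(y)\, d\mu(y).$$
Passing to $\limsup_{\rho\to 0}$: the left-hand side is by definition $\mathcal{Q}_\mu L(x,t\xi)$; the first right-hand term contributes at most $(1+\Delta^a_L(t))\mathcal{Q}_\mu L(x,\xi)$ (legitimate since $1+\Delta^a_L(t)\geq 0$); and the Lebesgue differentiation theorem, available in this doubling setting by Proposition \ref{Fundamental-Proposition-for-CalcVar-in-MMS}(c), sends the averaged integral of $a$ to $a(x)$ at $\mu$-a.e.\ $x$. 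Rearranging gives, for $\mu$-a.e.\ $x$ and every $\xi$ with $\mathcal{Q}_\mu L(x,\xi)<\infty$,
$$\frac{\mathcal{Q}_\mu L(x,t\xi) - \mathcal{Q}_\mu L(x,\xi)}{a(x)+\mathcal{Q}_\mu L(x,\xi)} \leq \Delta^a_L(t).$$

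To lift this $\mu$-a.e.\ bound to the sup over $x\in X$ appearing in $\Delta^{\tilde a}_{\mathcal{Q}_\mu L}(t)$, I would redefine $a$ on the exceptional null set: put $\tilde a = a$ on the full-measure set $X_0$ where $a<\infty$ and the Lebesgue differentiation of $a$ both hold, and $\tilde a = \infty$ on $X\setminus X_0$. Then $\tilde a\in L^1_\mu(X;]0,\infty])$ (its class in $L^1_\mu$ is unchanged), and on $X\setminus X_0$ the corresponding ratio is zero by the natural convention $(\,\cdot\,)/\infty = 0$. Hence $\Delta^{\tilde a}_{\mathcal{Q}_\mu L}(t)\leq \Delta^a_L(t)$ for every $t\in\,]0,1[$, and $\limsup_{t\to 1^-}\Delta^{\tilde a}_{\mathcal{Q}_\mu L}(t)\leq 0$ follows immediately. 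The whole proof is essentially bookkeeping once the competitor $tw$ is identified; the main point meriting care is precisely this passage from an almost-everywhere inequality to one valid for the supremum over all of $X$, which is resolved by the null-set redefinition of $a$ described above.
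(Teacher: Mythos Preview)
Your proof is correct and follows essentially the same route as the paper's: use $tw$ as competitor for $\mathcal{H}^\rho_\mu L(x,t\xi)$, apply the pointwise ru-usc inequality, pass to $\limsup_{\rho\to 0}$, and conclude that $\Delta_{\mathcal{Q}_\mu L}\leq \Delta^a_L$. You are in fact more careful than the paper on two minor points: you explicitly justify $1+\Delta^a_L(t)\geq 0$ before taking the infimum over $w$ (the paper sidesteps this by using a minimizing sequence $\{w_n\}$ and working with the difference $L(y,t\eta)-L(y,\eta)$ directly), and you address the fact that Lebesgue differentiation of $a$ yields $\mint_{Q_\rho(x)}a\,d\mu\to a(x)$ only for $\mu$-a.e.\ $x$ via the null-set redefinition $\tilde a$, a detail the paper's proof passes over silently when concluding $\Delta^a_{\mathcal{Q}_\mu L}(t)\leq\Delta^a_L(t)$.
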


\begin{proof}[\bf Proof of Proposition \ref{stability-of-ru-usc-by-mu-quasiconvexification}]
Fix any $t\in[0,1]$, any $x\in X$ and any $\xi\in\mathcal{Q}_\mu\LL_x$ where $\mathcal{Q}_\mu\LL_x$ denotes the effective domain of $\mathcal{Q}_\mu L(x,\cdot)$. Then $\mathcal{Q}_\mu L(x,\xi)=\limsup_{\rho\to 0}\mathcal{H}^\rho_\mu L(x,\xi)<\infty$ and without loss of generality we can suppose that $\mathcal{H}^\rho_\mu L(x,\xi)<\infty$ for all $\rho>0$. 

Fix any $\rho>0$. By definition, there exists $\{w_n\}_n\subset W^{1,p}_{\mu,0}(Q_\rho(x);\RR^m)$ such that:
\begin{eqnarray}
&&\mathcal{H}^\rho_\mu L(x,\xi)=\lim_{n\to\infty}\mint_{Q_\rho(x)} L(y,\xi+\nabla_\mu w_n(y))d\mu(y);\label{Ru-usc-prop-mu-Quasi-Eq1}\\
&&\xi+\nabla_\mu w_n(y)\in \LL_y\hbox{ for all }n\geq 1\hbox{ and }\mu\hbox{-a.a. }y\in Q_{\rho}(x).\label{Ru-usc-prop-mu-Quasi-Eq2}
\end{eqnarray}
Moreover, for every $n\geq 1$,
$$
\mathcal{H}^\rho_\mu L(x,t\xi)\leq \mint_{Q_\rho(x)} L\big(y,t(\xi+\nabla_\mu w_n(y))\big)d\mu(y)
$$
since $t w_n\in W^{1,p}_{\mu,0}(Q_\rho(x);\RR^m)$, and so
\begin{equation}\label{Ru-usc-prop-mu-Quasi-Eq3}
\mathcal{H}^\rho_\mu L(x,t\xi)-\mathcal{H}^\rho_\mu L(x,\xi)\leq \liminf_{n\to\infty}\mint_{Q_\rho(x)}\big(L(y,t(\xi+\nabla_\mu w_n(y)))-L(y,\xi+\nabla_\mu w_n(y))\big)d\mu(y).
\end{equation}
But $L$ is ru-usc, i.e., there exists $a\in L^1_{\rm loc,\mu}(X;]0,\infty])$ such that $\limsup_{t\to 1^-}\Delta^a_L(t)\leq 0$ with $\Delta^a_L:[0,1]\to]-\infty,\infty]$ defined by $\Delta^a_L(t):=\sup_{z\in X}\sup_{\xi\in \LL_z}{L(z,t\xi)-L(z,\xi)\over a(z)+L(z,\xi)}$. So, taking \eqref{Ru-usc-prop-mu-Quasi-Eq2} into account, for every $n\geq 1$ and $\mu$-a.e. $y\in Q_\rho(x)$,
$$
L\big(y,t(\xi+\nabla_\mu w_n(y))\big)-L\big(y,\xi+\nabla_\mu w_n(y)\big)\leq\Delta_L^a(t)\big(a(y)+L(y,\xi+\nabla_\mu w_n^\rho(y))\big).
$$
Hence
\begin{eqnarray*}
\mint_{Q_\rho(x)}\big(L(y,t(\xi+\nabla_\mu w_n(y)))-L(y,\xi+\nabla_\mu w_n(y))\big)d\mu\hskip-2mm&\leq&\hskip-2mm\Delta_L^a(t)\left(\mint_{Q_\rho(x)}a(y)d\mu\right.\\
&&\hskip-2mm\left.+\mint_{Q_\rho(x)}L(y,\xi+\nabla_\mu w_n(y))d\mu\right)
\end{eqnarray*}
for all $n\geq 1$.  Letting $n\to\infty$ and using \eqref{Ru-usc-prop-mu-Quasi-Eq1} and \eqref{Ru-usc-prop-mu-Quasi-Eq3}, it follows that
\begin{equation}\label{Ru-usc-prop-mu-Quasi-Eq4}
\mathcal{H}^\rho_\mu L(x,t\xi)-\mathcal{H}^\rho_\mu L(x,\xi)\leq \Delta_L^a(t)\left(\mint_{Q_\rho(x)}a(y)d\mu(y)+\mathcal{H}^\rho_\mu L(x,\xi)\right)
\end{equation}
for all $\rho>0$, and so  
$$
\mathcal{Q}_\mu L(x,t\xi)-\mathcal{Q}_\mu L(x,\xi)\leq \Delta^a_L(t)\big({a}(x)+\mathcal{Q}_\mu L(x,\xi)\big)
$$
by letting $\rho\to 0$ in \eqref{Ru-usc-prop-mu-Quasi-Eq4}, which implies that $\Delta_{\mathcal{Q}_\mu L}^{{a}}(t)\leq \Delta^a_L(t)$ for all $t\in[0,1]$, and the proof is complete.
\end{proof}

\section{Proof of the integral representation theorem}

This section is devoted to the proof of Theorem \ref{MainTheorem} which is divided into six steps.

\medskip

\paragraph{\bf Step 1: integral representation of the \boldmath$\overline{E}$\unboldmath} 

For each $u\in W^{1,p}_\mu( X;\RR^m)$ we consider the set function $\mathcal{S}_u:\mathcal{O}( X)\to[0,\infty]$ given by
$$
 \mathcal{S}_u(A):=\overline{E}(u,A).
$$
Recall that $\mathfrak{G}$ is the effective domain of the functional $u\mapsto\int_X G(x,\nabla_\mu u(x))d\mu(x)$. 

\begin{lemma}
\label{Lemma-1-MT}
If \eqref{p-coercivity}, \eqref{growth-on-Gx}, \eqref{hyp-convexity}, \eqref{Gx-growth} and \eqref{ru-usc-condition} hold then
$$
\mathcal{S}_u(A)=\int_A\lambda_u(x)d\mu(x)
$$
for all $u\in \mathfrak{G}$ and all $A\in\mathcal{O}( X)$ with $\lambda_u\in L^1_\mu( X)$ given by
$$
\lambda_u(x)=\lim_{\rho\to 0}{\mathcal{S}_u(Q_\rho(x))\over\mu(Q_\rho(x))}.
$$
\end{lemma}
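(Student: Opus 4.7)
The plan is to realize $\mathcal{S}_u$ as (the restriction to $\mathcal{O}(X)$ of) a finite Radon measure absolutely continuous with respect to $\mu$, and then apply the Vitali envelope machinery of Theorem \ref{Vitali-Envelope-Prop2} and Corollary \ref{CoroLLary-ViTali-EnVELopE-Theorem} to read off the integral representation with $\lambda_u = d_\mu\mathcal{S}_u$. So the real work is to produce the measure, and the structural tool is the De Giorgi--Letta lemma (Lemma \ref{DeGiorgi-Letta-Lemma}).

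First I would fix $u\in\mathfrak{G}$ and introduce the comparison set function $\nu(A):=\beta\int_A(1+G(x,\nabla_\mu u(x)))\,d\mu(x)$, which is a finite positive Radon measure by the definition of $\mathfrak{G}$ and which satisfies $\mathcal{S}_u(A)\leq\nu(A)$ for every $A\in\mathcal{O}(X)$, via the constant recovery sequence $u_n\equiv u$ and the upper bound in \eqref{Gx-growth}. Monotonicity, the condition $\mathcal{S}_u(\emptyset)=0$, and superadditivity on disjoint open sets are immediate from the definition of $\overline{E}$ (combine recovery sequences on disjoint sets; nothing is required here beyond disjointness).

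The central and hardest step is the subadditivity of $\mathcal{S}_u$: for $A,B\in\mathcal{O}(X)$, $\mathcal{S}_u(A\cup B)\leq \mathcal{S}_u(A)+\mathcal{S}_u(B)$. The strategy is the standard metric--space gluing. Pick near-optimal recovery sequences $\{u_n\}\to u$ on $A$ and $\{v_n\}\to u$ on $B$, and a parameter $t\in(0,1)$. On a suitable transition layer $V_t\subset A\cap B$ of controlled width, use the Uryshon cutoff $\varphi_t$ of Proposition \ref{Fundamental-Proposition-for-CalcVar-in-MMS}(e) with $\|D_\mu\varphi_t\|_\infty\leq\theta/((1-t)\rho)$, and glue $w_n^{t}:=\varphi_t u_n+(1-\varphi_t)v_n$. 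The Leibniz rule \eqref{Mu-der-Prod} expresses $\nabla_\mu w_n^{t}$ as a convex combination of $\nabla_\mu u_n$ and $\nabla_\mu v_n$ plus a term $D_\mu\varphi_t\otimes(u_n-v_n)$; this last term is absorbed using the $L^\infty$-compactness from Corollary \ref{L-infty-CSE} (valid thanks to $p>\DiM$) together with the $p$-coercivity \eqref{p-coercivity}. The convexity bound \eqref{hyp-convexity} controls $G(x,\nabla_\mu w_n^{t})$ on the transition layer by $G(x,\nabla_\mu u_n)+G(x,\nabla_\mu v_n)+1$, which is uniformly integrable; the volume of $V_t$ vanishes as $t\to1^-$ by Proposition \ref{Fundamental-Proposition-for-CalcVar-in-MMS}(f). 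Finally, to replace the rescaled field $t\,\nabla_\mu u_n$ by $\nabla_\mu u_n$ inside $L$, I invoke ru-usc \eqref{ru-usc-condition}, which gives error $\Delta_L^a(t)\bigl(\|a\|_{L^1_\mu}+\mathcal{S}_u(A)+\mathcal{S}_u(B)\bigr)\to 0$ as $t\to1^-$. Letting $n\to\infty$ and then $t\to1^-$ yields subadditivity.

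Armed with subadditivity, superadditivity and the domination $\mathcal{S}_u\leq\nu$, Lemma \ref{DeGiorgi-Letta-Lemma} extends $\mathcal{S}_u$ uniquely to a finite Radon measure on $X$, absolutely continuous with respect to $\nu$, hence with respect to $\mu$. The hypotheses of Theorem \ref{Vitali-Envelope-Prop2} are now trivial: (a) is the domination, and (b) follows because subadditivity together with $\mathcal{S}_u(A\setminus(B\cup C))=0$ (which holds by absolute continuity whenever $\mu(A\setminus(B\cup C))=0$) gives $\mathcal{S}_u(A)=\mathcal{S}_u(B\cup C)\leq\mathcal{S}_u(B)+\mathcal{S}_u(C)$. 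Theorem \ref{Vitali-Envelope-Prop2} then delivers $\lambda_u:=d_\mu\mathcal{S}_u\in L^1_\mu(X)$ and $\mathcal{S}_u^*(A)=\int_A\lambda_u\,d\mu$. To identify $\mathcal{S}_u^*$ with $\mathcal{S}_u$, I use Remark \ref{ReMArK-VItALi-For-OpEN-SEtS}: any Vitali packing $\{Q_i\}_{i\in I}\in\mathcal{V}_\eps(A)$ covers $A$ up to a $\mu$-null (hence $\mathcal{S}_u$-null) set, so $\sigma$-additivity gives $\mathcal{S}_u(A)=\sum_i\mathcal{S}_u(Q_i)\geq\mathcal{S}_u^\eps(A)$, whereas subadditivity of $\mathcal{S}_u$ gives the reverse inequality. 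The main obstacle is of course Step 2 (subadditivity / the gluing), where the simultaneous use of ru-usc, the convexity-type bound \eqref{hyp-convexity}, the $G$-growth \eqref{Gx-growth}, the cutoff estimate of Proposition \ref{Fundamental-Proposition-for-CalcVar-in-MMS}(e)--(f), and the $L^\infty$-compactness of Corollary \ref{L-infty-CSE} must be orchestrated carefully; every other step is essentially bookkeeping.
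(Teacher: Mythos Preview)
Your overall architecture matches the paper's exactly: verify the De Giorgi--Letta hypotheses for $\mathcal{S}_u$, with subadditivity the only nontrivial one, established by gluing near-optimal recovery sequences via a Lipschitz cutoff, controlling the cross term through $L^\infty$-compactness (Corollary~\ref{L-infty-CSE}), the convex-combination term through \eqref{hyp-convexity} and \eqref{Gx-growth}, and the radial scaling through ru-usc. Two points deserve comment.

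First, in your subadditivity sketch the sentence ``the volume of $V_t$ vanishes as $t\to1^-$ by Proposition~\ref{Fundamental-Proposition-for-CalcVar-in-MMS}(f)'' is not right: that property concerns annuli $Q_\rho(x)\setminus Q_{\tau\rho}(x)$ of \emph{balls}, not an arbitrary transition region between two open sets, and in any case shrinking the layer does not by itself kill $\int_{V_t}G(y,\nabla_\mu u_n)\,d\mu$, which may concentrate there. The paper handles this (see Lemma~\ref{LeMMa-MaiN-TheOReM1}) by De Giorgi slicing: it introduces $q$ equispaced layers $W_i$ between $Z$ and $\partial U$, observes that the disjoint union of the $W_i$ sits inside a fixed set whose $L$-energy is bounded, and then \emph{averages} over $i$ so that at least one layer carries at most $1/q$ of the total; letting $q\to\infty$ removes the layer contribution. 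Your other ingredients (the cutoff estimate, the $L^\infty$-convergence, the use of \eqref{growth-on-Gx} to bound $G(\cdot,\cdot)$ on the small-argument set, the ru-usc correction) are exactly as in the paper. Note also that the paper first proves the auxiliary inequality $\mathcal{S}_u(Z\cup T)\le\mathcal{S}_u(U)+\mathcal{S}_u(V)$ for $\overline{Z}\subset U$, $T\subset V$, and then derives full subadditivity by an inner-approximation/$\varepsilon$-exhaustion argument using the domination $\mathcal{S}_u\le\nu$.

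Second, once De Giorgi--Letta gives you a finite Radon measure absolutely continuous with respect to $\mu$, the paper concludes directly by Radon--Nikodym plus Lebesgue differentiation (the latter holds because $\mu$ is doubling). Your route through Theorem~\ref{Vitali-Envelope-Prop2} and the identification $\mathcal{S}_u^*=\mathcal{S}_u$ is correct but unnecessarily indirect: for a genuine measure $\mathcal{S}_u$ the Vitali envelope is trivially $\mathcal{S}_u$ itself, so you are re-proving Lebesgue differentiation via a longer path.
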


\begin{proof}[\bf Proof of Lemma \ref{Lemma-1-MT}]
Fix $u\in \mathfrak{G}$.  Using the right inequality in \eqref{Gx-growth} we see that
\begin{eqnarray}\label{ImPorTanT-EqUAtION}
\mathcal{S}_u(A)\leq \beta\mu(A)+\beta\int_A G(x,\nabla_\mu u(x))d\mu(x)<\infty
\end{eqnarray}
for all $A\in\mathcal{O}( X)$. Thus, the condition (d) of Lemma \ref{DeGiorgi-Letta-Lemma} is satisfied with $\nu=\beta\big(1+G(x,\nabla_\mu u(x))\big)d\mu$ 
(which is absolutely continuous with respect to $\mu$). On the other hand, it is easily seen that the conditions (a) and (b) of Lemma \ref{DeGiorgi-Letta-Lemma} are satisfied. Hence, the proof is completed by proving  the condition (c) of Lemma \ref{DeGiorgi-Letta-Lemma}, i.e., 
\begin{eqnarray}\label{Subadditivity-First-Goal}
\mathcal{S}_u(A\cup B)\leq \mathcal{S}_u(A)+\mathcal{S}_u(B)\hbox{ for all }A,B\in\mathcal{O}( X).
\end{eqnarray}
Indeed, by Lemma \ref{DeGiorgi-Letta-Lemma}, the set function $\mathcal{S}_u$ can be (uniquely) extended to a (finite) positive Radon measure which is absolutely continuous with respect to $\mu$, and the theorem follows by using Radon-Nikodym's theorem and then Lebesgue's differentiation theorem. 
\end{proof}

\begin{remark}\label{Remark-Lemma-1-MT}
In fact, Lemma \ref{Lemma-1-MT} establishes that for every $u\in\mathfrak{G}$, $E(u,\cdot)$ can be uniquely extended to a finite positive Radon measure on $ X$ which is absolutely continuous with respect to $\mu$.
\end{remark}

To show \eqref{Subadditivity-First-Goal} we need the following lemma.

\begin{lemma}\label{LeMMa-MaiN-TheOReM1}
If $U,V,Z,T\in\mathcal{O}( X)$ are such that $\overline{Z}\subset U$ and $T\subset V$,  then
\begin{eqnarray}\label{SubAddiTive-Goal}
\mathcal{S}_u(Z\cup T)\leq\mathcal{S}_u(U)+\mathcal{S}_u(V).
\end{eqnarray}
\end{lemma}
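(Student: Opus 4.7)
I would prove this by a De Giorgi-style cutoff/slicing argument, crucially exploiting the $L^\infty_\mu$-compactness guaranteed by Corollary \ref{L-infty-CSE} (since $p>\DiM$). First, fix $\eps>0$ and choose near-optimal recovery sequences $\{u_n\}, \{v_n\}\subset W^{1,p}_\mu(X;\RR^m)$ converging to $u$ in $L^p_\mu$ with $\liminf_n E(u_n,U)\leq\mathcal{S}_u(U)+\eps$ and $\liminf_n E(v_n,V)\leq\mathcal{S}_u(V)+\eps$. By \eqref{p-coercivity} and \eqref{Gx-growth} both sequences are bounded in $W^{1,p}_\mu( X;\RR^m)$, so Corollary \ref{L-infty-CSE} gives (up to extraction) $u_n,v_n\to u$ in $L^\infty_\mu( X;\RR^m)$; in particular $\|u_n-v_n\|_{L^\infty_\mu}\to 0$, which is the essential gain over the $L^p$-setting.

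Since $\overline{Z}\subset U$, set $\delta:={\rm dist}(\overline{Z}, X\setminus U)>0$, and for a fixed integer $k\geq 1$ build nested open sets $\overline{Z}\subset W_0\subset\overline{W}_0\subset W_1\subset\cdots\subset\overline{W}_{k-1}\subset W_k\subset\overline{W}_k\subset U$ using the distance $d(\cdot,\overline{Z})$. For each $i=1,\ldots,k$ pick a Uryshon-type cutoff $\varphi_i\in{\rm Lip}( X)$ (constructed as in Proposition \ref{Fundamental-Proposition-for-CalcVar-in-MMS}(e), adapted to the pair $( X\setminus W_i,\overline{W}_{i-1})$) with $\varphi_i=1$ on $\overline{W}_{i-1}$, $\varphi_i=0$ on $ X\setminus W_i$, and $\|D_\mu\varphi_i\|_{L^\infty_\mu}\leq C_k$. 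Define $w_n^{(i)}:=\varphi_i u_n+(1-\varphi_i)v_n$, so that $w_n^{(i)}\to u$ in $L^p_\mu$, and by \eqref{Mu-der-Prod}
\[
\nabla_\mu w_n^{(i)}=\varphi_i\nabla_\mu u_n+(1-\varphi_i)\nabla_\mu v_n+D_\mu\varphi_i\otimes(u_n-v_n).
\]

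Splitting $Z\cup T$ into the $\mu$-essentially disjoint pieces $W_{i-1}\cap(Z\cup T)$, $A_i\cap(Z\cup T)$ with $A_i:=W_i\setminus\overline{W}_{i-1}$, and $( X\setminus W_i)\cap(Z\cup T)$, note that $w_n^{(i)}=u_n$ on the first and, since $Z\subset W_i$, the third is contained in $T\subset V$ with $w_n^{(i)}=v_n$ there. Thus
\[
E(w_n^{(i)},Z\cup T)\leq E(u_n,U)+E(v_n,V)+E(w_n^{(i)},A_i).
\]
The core estimate is a uniform-in-$k$ bound on $\sum_{i=1}^k E(w_n^{(i)},A_i)$ for $n$ large. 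Using $L\leq\beta(1+G)$, iteratively applying the weak convexity \eqref{hyp-convexity} to the splitting $\nabla_\mu w_n^{(i)}=\tfrac12(2a_i)+\tfrac12(2b_i)$ with $a_i:=\varphi_i\nabla_\mu u_n+(1-\varphi_i)\nabla_\mu v_n$ and $b_i:=D_\mu\varphi_i\otimes(u_n-v_n)$, and choosing $n=n(k)$ so large that $\|b_i\|_{L^\infty_\mu}\leq C_k\|u_n-v_n\|_{L^\infty_\mu}\leq r/2$ (with $r$ from \eqref{growth-on-Gx}), we control $G(x,2b_i)$ by $\sup_{|\eta|\leq r}G(\cdot,\eta)\in L^1_\mu$; the $a_i$-term is a genuine convex combination and is absorbed via \eqref{hyp-convexity} into $G(x,\nabla_\mu u_n)+G(x,\nabla_\mu v_n)$ (after a preliminary radial rescaling $u_n\leadsto tu_n$, $t<1$ close to $1$, to avoid doubling, which is justified through \eqref{ru-usc-condition}). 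Summing over the disjoint rings $A_i$ yields a bound in terms of $\mathcal{G}(u_n)+\mathcal{G}(v_n)+\|\sup_{|\eta|\leq r}G(\cdot,\eta)\|_{L^1_\mu}+\mu(U)$, independent of $k$.

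Averaging then produces an index $i=i(n,k)$ with $E(w_n^{(i)},A_i)\leq(C/k)$ times the universal bound above. Passing to $\liminf_n$ (using $w_n^{(i(n,k))}\to u$ in $L^p_\mu$ by diagonal extraction) and then $k\to\infty$, $\eps\to0$ gives $\mathcal{S}_u(Z\cup T)\leq\mathcal{S}_u(U)+\mathcal{S}_u(V)$. The main obstacle is precisely the transition estimate under the $G$-growth: the failure of $G(x,2\xi)\lesssim G(x,\xi)$ forces the combined use of \eqref{hyp-convexity}, the $L^\infty_\mu$-smallness of the perturbation $D_\mu\varphi_i\otimes(u_n-v_n)$ (available only because $p>\DiM$), the integrability bound \eqref{growth-on-Gx}, and ru-usc \eqref{ru-usc-condition} to legitimize an auxiliary rescaling of the recovery sequences.
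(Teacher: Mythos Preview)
Your overall strategy is essentially the paper's: De Giorgi slicing between recovery sequences, $L^\infty_\mu$-compactness from $p>\DiM$ to make the cross term $D_\mu\varphi_i\otimes(u_n-v_n)$ small enough to invoke \eqref{growth-on-Gx}, iterated use of \eqref{hyp-convexity} on the transition layers, ru-usc to compensate the radial rescaling, then averaging over layers and diagonalizing. All of this matches the paper's proof closely.

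There is, however, a concrete technical issue with the specific splitting you propose. Writing $\nabla_\mu w_n^{(i)}=\tfrac12(2a_i)+\tfrac12(2b_i)$ and applying \eqref{hyp-convexity} forces you to control $G(x,2a_i)$ with $a_i=\varphi_i\nabla_\mu u_n+(1-\varphi_i)\nabla_\mu v_n$. This is a genuine doubling, and a ``preliminary rescaling $u_n\leadsto tu_n$ with $t$ close to $1$'' does not cure it: you end up with $2t\,a_i$, and $2t>1$, so $2a_i$ is still not a convex combination of $\nabla_\mu u_n$ and $\nabla_\mu v_n$. Since $G$ need not satisfy $G(x,2\xi)\lesssim 1+G(x,\xi)$, the bound you claim for the $a_i$-term does not follow from \eqref{hyp-convexity}.

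The paper sidesteps this by \emph{choosing the convex split to match the rescaling parameter}: it works with $t\nabla_\mu w_n^{(i)}$ and decomposes it as
\[
t\nabla_\mu w_n^{(i)}=(1-t)\Big[\tfrac{t}{1-t}\,D_\mu\varphi_i\otimes(u_n-v_n)\Big]+t\big[\varphi_i\nabla_\mu u_n+(1-\varphi_i)\nabla_\mu v_n\big].
\]
Now both pieces enter with their natural coefficients: the second is already a convex combination (no doubling), and the first, though carrying the factor $\tfrac{t}{1-t}$, is driven into the ball of radius $r$ by the $L^\infty_\mu$-smallness of $u_n-v_n$ for $n$ large (depending on $t$). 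The ru-usc condition is then used only on the \emph{main} pieces $\int_U L(x,t\nabla_\mu u_n)$ and $\int_V L(x,t\nabla_\mu v_n)$ to pass from $t\nabla_\mu u_n$, $t\nabla_\mu v_n$ back to $\nabla_\mu u_n$, $\nabla_\mu v_n$ at the cost of $\Delta_L^a(t)$, which vanishes as $t\to1^-$. With this correction your plan goes through and is the paper's argument.
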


\begin{proof}[\bf Proof of Lemma \ref{LeMMa-MaiN-TheOReM1}]
Let $\{u_n\}_{n}$ and $\{v_n\}_{n}$ be two sequences in $W^{1,p}_\mu( X;\RR^m)$ such that:
\begin{eqnarray}
&& \|u_n-u\|_{L^p_\mu( X;\RR^m)}\to0;\label{PrOoF-MT2-EquA1}\\
&& \|v_n- u\|_{L^p_\mu( X;\RR^m)}\to0;\label{PrOoF-MT2-EquA2}\\
&& \lim_{n\to\infty}\int_UL(x,\nabla_\mu u_n(x))d\mu(x)=\mathcal{S}_u(U)<\infty;\label{PrOoF-MT2-EquA3}\\
&& \lim_{n\to\infty}\int_VL(x,\nabla_\mu v_n(x))d\mu(x)=\mathcal{S}_u(V)<\infty.\label{PrOoF-MT2-EquA4}
\end{eqnarray}
Since $L$ is $p$-coercive (see \eqref{p-coercivity} and the left inequality in \eqref{Gx-growth}), from \eqref{PrOoF-MT2-EquA3} and\eqref{PrOoF-MT2-EquA4} we see that $\sup_n\|\nabla_\mu u_n\|_{L^p_\mu( X;\MM)}<\infty$ and $\sup_n\|\nabla_\mu v_n\|_{L^p_\mu( X;\MM)}<\infty$. As $p>\DiM$, taking \eqref{PrOoF-MT2-EquA1} and \eqref{PrOoF-MT2-EquA2} into account, by Corollary \ref{L-infty-CSE} we can assert, up to a subsequence, that:
\begin{eqnarray}
&& \|u_n-u\|_{L^\infty_\mu( X;\RR^m)}\to0;\label{PrOoF-MT2-EquA1-bis}\\
&& \|v_n- u\|_{L^\infty_\mu( X;\RR^m)}\to0.\label{PrOoF-MT2-EquA2-bis}
\end{eqnarray}
Fix $\delta\in]0,{\rm dist}(Z,\partial U)[$ with $\partial U:=\overline{U}\setminus U$, fix any $q\geq 1$ and consider $W^-_i,W^+_i\subset  X$ given by:
\begin{trivlist} 
\item[]$W^-_i:=\left\{x\in  X:{\rm dist}(x,Z)\leq {\delta\over 3}+{(i-1)\delta\over 3q}\right\}$;
\item[]$W^+_i:=\left\{x\in  X:{\delta\over 3}+{i\delta\over 3q}\leq{\rm dist}(x,Z)\right\},$
\end{trivlist}
where $i\in\{1,\cdots,q\}$. For every $i\in\{1,\cdots,q\}$ there exists a Uryshon function $\varphi_i\in{\rm Lip}( X)$ for the pair $(W^+_i,W^-_i)$. Fix any $n\geq 1$ and define $w_n^i\in W^{1,p}_\mu( X;\RR^m)$ by 
\begin{equation}\label{Def-w-i-n}
w^i_n:=\varphi_iu_n+(1-\varphi_i)v_n.
\end{equation}
Fix any $t\in]0,1[$. Setting $W_i:= X\setminus (W^-_i\cup W^{+}_i)$ and using Theorem \ref{cheeger-theorem}(d) and \eqref{Mu-der-Prod} we have
$$
\nabla_\mu(tw_n^i)=t\nabla_\mu w_n^i=\left\{
\begin{array}{ll}
t\nabla_\mu u_n&\hbox{in }W^-_i\\
(1-t){t\over 1-t}D_\mu\varphi_i\otimes(u_n-v_n)+t\big(\varphi_i\nabla_\mu u_n+(1-\varphi_i)\nabla_\mu v_n\big)&\hbox{in }W_i\\
t\nabla_\mu v_n&\hbox{in }W^+_i.
\end{array}
\right.
$$
Noticing that $Z\cup T=((Z\cup T)\cap W^-_i)\cup(W\cap W_i)\cup(T\cap W^+_i)$ with $(Z\cup T)\cap W_i^-\subset U$, $T\cap W^+_i\subset V$ and $W:=T\cap\{x\in U:{\delta\over 3}<{\rm dist}(x,Z)<{2\delta\over 3}\}$ we deduce that for every $i\in\{1,\cdots,q\}$,
\begin{eqnarray}
\int_{Z\cup T}L(x,t\nabla_\mu w^i_n)d\mu&\leq&\int_UL(x,t\nabla_\mu u_n)d\mu+\int_VL(x,t\nabla_\mu v_n)d\mu\nonumber\label{LayERs-Eq1}\\
&&+\int_{W\cap W_i}L(x,t\nabla_\mu w^i_n)d\mu.\label{LayERs-Eq1}
\end{eqnarray}
Fix any $i\in\{1,\cdots,q\}$. From the right inequality in \eqref{Gx-growth} and the inequality \eqref{hyp-convexity} we see that 
\begin{eqnarray}
\int_{W\cap W_i}L(x,t\nabla_\mu w^i_n)d\mu&\leq&\beta\mu(W\cap W_i)+ \beta\int_{W\cap W_i}G(x,t\nabla_\mu w^i_n)d\mu\nonumber\\
&\leq& \beta(1+\gamma)\mu(W\cap W_i)\nonumber\\
&&+\beta\gamma\int_{W\cap W_i}G(x,\varphi_i\nabla_\mu u_n+(1-\varphi_i)\nabla_\mu v_n)d\mu\nonumber\\
&&+\beta\gamma\int_{W\cap W_i}G\left(x,{t\over 1-t}D_\mu\varphi_i\otimes(u_n-v_n)\right)d\mu,\nonumber
\end{eqnarray}
and by using again the inequality \eqref{hyp-convexity} and the left inequality in \eqref{Gx-growth} we obtain
\begin{eqnarray}
\int_{W\cap W_i}L(x,t\nabla_\mu w^i_n)d\mu&\leq&\beta(1+\gamma+\gamma^2)\mu(W\cap W_i)\nonumber\\
&&+{\beta\gamma^2\over \alpha}\left(\int_{W\cap W_i}L(x,\nabla_\mu u_n)d\mu+\int_{W\cap W_i}L(x,\nabla_\mu v_n)d\mu\right)\nonumber\\
&&+\beta\gamma\int_{W\cap W_i}G\left(x,{t\over 1-t}D_\mu\varphi_i\otimes(u_n-v_n)\right)d\mu.\label{LayERs-EquA2}
\end{eqnarray}
On the other hand, we have
$$
\left|{t\over 1-t}D_\mu\varphi_i(x)\otimes(u_n(x)-v_n(x))\right|\leq \left|{t\over 1-t}\right|\|D_\mu\varphi_i\|_{L^\infty_\mu( X)}\|u_n-v_n\|_{L^\infty_\mu( X;\RR^m)}
$$
for $\mu$-a.a. $x\in X$. But $\lim_{n\to\infty}\|u_n-v_n\|_{L^\infty_\mu( X;\RR^m)}=0$ by \eqref{PrOoF-MT2-EquA1-bis} and \eqref{PrOoF-MT2-EquA2-bis}, hence for each $t\in]0,1[$ and each $i\in\{1,\cdots,q\}$ there exists $n_{t,i}\geq 1$ such that
$$
\left|{t\over 1-t}D_\mu\varphi_i(x)\otimes(u_n(x)-v_n(x))\right|\leq r
$$
for $\mu$-a.a. $x\in X$ and all $n\geq n_{t,i}$ with $r>0$ given by \eqref{growth-on-Gx}. Hence
\begin{equation}\label{LayERs-EquA3}
\int_{W\cap W_i}G\left(x,{t\over 1-t}D_\mu\varphi_i\otimes(u_n-v_n)\right)d\mu\leq \int_{W\cap W_i}\sup_{|\xi|\leq r}G(x,\xi)d\mu(x)
\end{equation}
for all $n\geq N_{t,q}$ with $N_{t,q}=\max\{n_{t,i}:i\in\{1,\cdots,q\}\}$. Moreover, we have:
\begin{eqnarray}
&&\hskip-14mm\int_U L(x,t\nabla_\mu u_n)d\mu\leq \int_U L(x,\nabla_\mu u_n)d\mu+\Delta^a_L(t)\left(\int_U a(x)d\mu(x)+\int_U L(x,\nabla_\mu u_n)d\mu\right);\label{Equ-LayERs-EquA4}\\
&&\hskip-14mm\int_V L(x,t\nabla_\mu v_n)d\mu\leq  \int_V L(x,\nabla_\mu v_n)d\mu+\Delta^a_L(t)\left(\int_V a(x)d\mu(x)+\int_V L(x,\nabla_\mu v_n)d\mu\right),\label{Equ-LayERs-EquA5}
\end{eqnarray}
where $a\in L^1_\mu(X;]0,\infty])$ is given by \eqref{ru-usc-condition} (and $\limsup_{t\to 1^-}\Delta^a_L(t)\leq 0$ because $L$ is ru-usc).

Taking \eqref{LayERs-EquA3} into account and substituting \eqref{LayERs-EquA2}, \eqref{Equ-LayERs-EquA4} and \eqref{Equ-LayERs-EquA5} into \eqref{LayERs-Eq1} and then averaging these inequalities, it follows that for every $q\geq 1$, every $t\in]0,1[$ and every $n\geq N_{t,q}$, there exists $i_{n,t,q}\in\{1,\cdots,q\}$ such that
\begin{eqnarray}
\int_{Z\cup T}L(x,\nabla_\mu (tw_n^{i_{n,t,q}}))d\mu\hskip-2mm&\leq&\hskip-2mm\int_U L(x,\nabla_\mu u_n)d\mu+\Delta^a_L(t)\left(\int_U a(x)d\mu(x)+\int_U L(x,\nabla_\mu u_n)d\mu\right)\nonumber\\
&&\hskip-2mm+\int_V L(x,\nabla_\mu v_n)d\mu+\Delta^a_L(t)\left(\int_V a(x)d\mu(x)+\int_V L(x,\nabla_\mu v_n)d\mu\right)\nonumber\\
&&\hskip-2mm+{c\over q}\left(\int_X\sup_{|\xi|\leq r}G(x,\xi)d\mu+\int_U L(x,\nabla_\mu u_n)d\mu+\int_VL(x,\nabla_\mu v_n)d\mu\right)\nonumber
\end{eqnarray}
with $c=\max\big\{\beta(1+\gamma+\gamma^2)+1, {\beta\gamma^2\over\alpha}\big\}$, where $\int_X\sup_{|\xi|\leq r}G(x,\xi)d\mu<\infty$ by \eqref{growth-on-Gx}. Thus, letting $n\to\infty$, $t\to 1^-$ and $q\to\infty$ and using \eqref{PrOoF-MT2-EquA3} and \eqref{PrOoF-MT2-EquA4}, we get
\begin{equation}\label{FinAL-inEquAtIOn-SteP1}
\limsup_{q\to\infty}\limsup_{t\to 1^-}\limsup_{n\to\infty}\int_{Z\cup T}L(x,\nabla_\mu (tw_n^{i_{n,t,q}}))d\mu\leq \mathcal{S}_u(U)+\mathcal{S}_u(V).
\end{equation}
On the other hand, taking \eqref{Def-w-i-n} into account and using \eqref{PrOoF-MT2-EquA1} and \eqref{PrOoF-MT2-EquA2} we see that
$$
\lim_{q\to\infty}\lim_{t\to 1^-}\lim_{n\to\infty}\|tw_n^{i_{n,t,q}}-u\|_{L^p_\mu( X;\RR^m)}=0.
$$
By diagonalization, there exist increasing mappings $n\mapsto t_n$ and $n\mapsto q_n$ with $t_n\to 1^-$ and $q_n\to\infty$ such that:
\begin{eqnarray*}
&&\hskip-5.5mm\liminf_{n\to\infty}\int_{Z\cup T}L(x,\nabla_\mu \hat w_n)d\mu\leq \limsup_{n\to\infty}\int_{Z\cup T}L(x,\nabla_\mu \hat w_n)d\mu\leq \limsup_{q\to\infty}\limsup_{t\to 1^-}\limsup_{n\to\infty}\int_{Z\cup T}L(x,\nabla_\mu (tw_n^{i_{n,t,q}}))d\mu;\\
&&\hskip-5.5mm\lim_{n\to\infty}\|\hat w_n-u\|_{L^p_\mu( X;\RR^m)}=0,
\end{eqnarray*}
where $\hat w_n:=t_nw_n^{i_{n,t_n,q_n}}$. Hence
$$
\mathcal{S}_u(Z\cup T)\leq \limsup_{q\to\infty}\limsup_{t\to 1^-}\limsup_{n\to\infty}\int_{Z\cup T}L(x,\nabla_\mu (tw_n^{i_{n,t,q}}))d\mu,
$$
and \eqref{SubAddiTive-Goal} follows from \eqref{FinAL-inEquAtIOn-SteP1}. 
\end{proof}

\medskip

We now prove \eqref{Subadditivity-First-Goal}. Fix $A,B\in\mathcal{O}(X)$. Fix any $\eps>0$ and consider $C,D\in\mathcal{O}(X)$ such that $\overline{C}\subset A$, $\overline{D}\subset B$ and 
$$
\beta\mu(E)+\beta\int_E g_u(x)d\mu(x)<\eps
$$ 
with $E:=A\cup B\setminus\overline{C\cup D}$. Then $\mathcal{S}_u(E)\leq\eps$ by \eqref{ImPorTanT-EqUAtION}. Let $\hat C,\hat D\in\mathcal{O}(X)$ be such that $\overline{C}\subset\hat C$, $\overline{\hat C}\subset A$, $\overline{D}\subset\hat D$ and $\overline{\hat D}\subset B$. Applying Lemma \ref{LeMMa-MaiN-TheOReM1} with $U=\hat C\cup\hat D$, $V=T=E$ and $Z=C\cup D$ (resp. $U=A$, $V=B$, $Z=\hat C$ and $T=\hat D$) we obtain
$$
\mathcal{S}_u(A\cup B)\leq\mathcal{S}_u(\hat C\cup\hat D)+\eps\hbox{ \big(resp. }\mathcal{S}_u(\hat C\cup\hat D)\leq\mathcal{S}_u(A)+\mathcal{S}_u(B)\big),
$$
and \eqref{Subadditivity-First-Goal} follows by letting $\eps\to0$. $\blacksquare$

\medskip

\paragraph{\bf Step 2: another formula for \boldmath$\overline{E}$\unboldmath} Let $\overline{E}_0:W^{1,p}_\mu(X;\RR^m)\times\mathcal{O}(X)\to[0,\infty]$ given by
$$
\overline{E}_0(u,A):=\inf\left\{\liminf_{n\to\infty}E(u_n,A):W^{1,p}_{\mu,0}(A;\RR^m)\ni u_n-u\stackrel{L^p_\mu}{\to}0\right\}.
$$
\begin{lemma}\label{Lemma-2-MT}
If \eqref{p-coercivity}, \eqref{growth-on-Gx}, \eqref{hyp-convexity}, \eqref{Gx-growth} and \eqref{ru-usc-condition} hold then for every $u\in \mathfrak{G}$ and every $A\in\mathcal{O}(X)$, one has{\rm:}
\begin{eqnarray}
&&\overline{E}(u,A)\leq\overline{E}_0(u,A);\label{GOal-Lemma-Bis-1}\\
&&\overline{E}_0(tu,A)\leq \big(1+\Delta^a_L(t)\big)\overline{E}(u,A)+\Delta_L^a(t)\|a\|_{L^1_\mu(A)}\hbox{ for all }t\in]0,1[,\label{GOal-Lemma-Bis}
\end{eqnarray}
where $a\in L^1_\mu(X;]0,\infty])$ is given by \eqref{ru-usc-condition}. As a direct consequence we have
\begin{eqnarray}\label{GOal-Lemma-Bis-FiNaL}
\overline{E}(u,A)=\lim_{t\to1^-}\overline{E}_0(tu,A)
\end{eqnarray}
for all $u\in \mathfrak{G}$ and all $A\in\mathcal{O}(X)$.
\end{lemma}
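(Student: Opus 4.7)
The plan is to handle the two inequalities separately and derive the limit formula by squeezing; the key technical ingredient is a layer/cutoff construction entirely analogous to the one already used in the proof of Lemma \ref{LeMMa-MaiN-TheOReM1}.

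For \eqref{GOal-Lemma-Bis-1}, any sequence $\{u_n\}$ admissible in $\overline E_0(u,A)$ satisfies $u_n-u \in W^{1,p}_{\mu,0}(A;\RR^m)$, hence vanishes $\mu$-a.e.\ on $X\setminus A$. Combined with $u_n - u \to 0$ in $L^p_\mu$, this forces $u_n \to u$ in $L^p_\mu(X;\RR^m)$, making it admissible for $\overline E(u,A)$ as well. Taking infima yields \eqref{GOal-Lemma-Bis-1}.

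For \eqref{GOal-Lemma-Bis}, I may assume $\overline E(u,A)<\infty$ (otherwise there is nothing to prove, and this is automatic when $u\in\mathfrak G$ by the right inequality of \eqref{Gx-growth}), and take a recovery sequence $\{u_n\}$ with $u_n\to u$ in $L^p_\mu$ and $E(u_n,A)\to\overline E(u,A)$. Since $\{\nabla_\mu u_n\}$ is bounded in $L^p_\mu$ by \eqref{p-coercivity} and the left inequality of \eqref{Gx-growth}, Corollary \ref{L-infty-CSE} provides $u_n\to u$ in $L^\infty_\mu$ along a subsequence. Fix $t\in(0,1)$, pick a compact $K\subset A$ and $\delta>0$ with $\{x:d(x,K)\leq3\delta\}\subset A$, and for $i=1,\dots,q$ define nested layers $W^-_i:=\{d(\cdot,K)\leq\delta+(i-1)\delta/q\}$ and $W^+_i:=\{d(\cdot,K)\geq\delta+i\delta/q\}$. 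Choose Uryshon functions $\varphi_i\in{\rm Lip}(X)$ for $(W^+_i,W^-_i)$ with $\|D_\mu\varphi_i\|_{L^\infty_\mu}\leq Cq/\delta$ via Proposition \ref{Fundamental-Proposition-for-CalcVar-in-MMS}(e); note that $\varphi_i=0$ on $X\setminus A$. The competitor $v^i_n:=tu+t\varphi_i(u_n-u)$ then satisfies $v^i_n-tu\in W^{1,p}_{\mu,0}(A;\RR^m)$ and $v^i_n\to tu$ in $L^p_\mu$. On $W^-_i\cap A$ one has $\nabla_\mu v^i_n=t\nabla_\mu u_n$, so I would apply ru-usc \eqref{ru-usc-condition} to bound $\int_{W^-_i\cap A}L(x,t\nabla_\mu u_n)\,d\mu$ by $(1+\Delta^a_L(t))\int L(x,\nabla_\mu u_n)\,d\mu+\Delta^a_L(t)\int a\,d\mu$; the analogous bound holds on $W^+_i\cap A$ with $u$ in place of $u_n$. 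On the thin transition $W_i:=X\setminus(W^-_i\cup W^+_i)$, apply the product rule \eqref{Mu-der-Prod}, the $G$-growth \eqref{Gx-growth}, and iterated \eqref{hyp-convexity} exactly as in Lemma \ref{LeMMa-MaiN-TheOReM1}, bounding the correction term $G(x,tD_\mu\varphi_i\otimes(u_n-u))$ by $\sup_{|\xi|\leq r}G(x,\xi)\in L^1_\mu(X)$ once $n$ is so large that $\|u_n-u\|_{L^\infty_\mu(X;\RR^m)}\leq r\delta/(Cq)$.

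Averaging the resulting inequalities over $i=1,\dots,q$ and using pigeonhole to pick an index $i_{n,q}$ minimizing the energy, the disjointness of the layers $W_i$ converts $\sum_i\int_{W_i\cap A}$ into $\int_A$, so the transition contribution is $O(1/q)$. A diagonal extraction over $n\to\infty$, then $q\to\infty$, then $K\uparrow A$ produces a sequence admissible for $\overline E_0(tu,A)$ whose $\liminf$ of energy is at most $(1+\Delta^a_L(t))\overline E(u,A)+\Delta^a_L(t)\|a\|_{L^1_\mu(A)}$, giving \eqref{GOal-Lemma-Bis}. The main obstacle is the ordering of the three limits: since $\|D_\mu\varphi_i\|_{L^\infty_\mu}$ blows up like $q/\delta$, the $L^\infty_\mu$-smallness of $u_n-u$ must be exploited by sending $n\to\infty$ first, before $q\to\infty$ or $K\uparrow A$. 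Finally, \eqref{GOal-Lemma-Bis-FiNaL} follows by squeezing: $\overline E(\cdot,A)$ is $L^p_\mu$-lower semicontinuous by construction, and $tu\to u$ in $L^p_\mu$ as $t\to 1^-$, so \eqref{GOal-Lemma-Bis-1} gives $\overline E(u,A)\leq\liminf_{t\to1^-}\overline E(tu,A)\leq\liminf_{t\to1^-}\overline E_0(tu,A)$, while \eqref{GOal-Lemma-Bis} together with $\limsup_{t\to1^-}\Delta^a_L(t)\leq 0$ yields the matching upper bound.
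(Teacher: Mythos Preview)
Your argument is correct and follows the same layer/cutoff scheme as the paper: the same competitor $t(\varphi_i u_n+(1-\varphi_i)u)$, the ru-usc bounds on $W_i^\pm$, the $G$-growth and \eqref{hyp-convexity} estimate on the transition ring, averaging-plus-pigeonhole over $i$, and the same squeeze for \eqref{GOal-Lemma-Bis-FiNaL}. The only cosmetic difference is your explicit exhaustion $K\uparrow A$ to dispose of the $A\cap W_i^+$ contribution (where $\nabla_\mu v_n^i=t\nabla_\mu u$), whereas the paper works with a fixed $A_\delta$; also note that the convex decomposition via \eqref{hyp-convexity} produces the factor $\tfrac{t}{1-t}$ rather than $t$ in front of $D_\mu\varphi_i\otimes(u_n-u)$, though this is immaterial to the bound.
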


\begin{proof}[\bf Proof of Lemma \ref{Lemma-2-MT}]
Fix $u\in \mathfrak{G}$ and $A\in\mathcal{O}(X)$. Clearly, $\overline{E}_0(u;A)\geq \overline{E}(u,A)$ because $W^{1,p}_{\mu,0}(A;\RR^m)\subset W^{1,p}_\mu(X;\RR^m)$. So \eqref{GOal-Lemma-Bis-1} is satisfied. Thus, it remains to prove \eqref{GOal-Lemma-Bis}. Let $\{u_n\}_n\subset W^{1,p}_\mu(X;\RR^m)$ be such that:
\begin{eqnarray}
&& u_n\to u\hbox{ in }L^p_\mu(X;\RR^m)\label{PrOoF-MT2-EquA1-BBiSS};\\
&& \lim_{n\to\infty}\int_A L(x,\nabla_\mu u_n(x))d\mu(x)=\overline{E}(u,A)<\infty.\label{PrOoF-MT2-EquA3-BiS}
\end{eqnarray}

\begin{remark}
Without loss of generality we can always suppose that $\overline{E}(u,A)<\infty$. So, in fact, if we futhermore assume that \eqref{lsc-intGx} is satisfied, Lemma \ref{Lemma-2-MT} remains true if we replace ``$u\in\mathfrak{G}$" by ``$u\in W^{1,p}_\mu(X;\RR^m)$".
\end{remark}

Since $L$ is $p$-coercive (see \eqref{p-coercivity} and the left inequality in \eqref{Gx-growth}), from \eqref{PrOoF-MT2-EquA3-BiS} we see that $\sup_n\|\nabla_\mu u_n\|_{L^p_\mu( X;\MM)}<\infty$. As $p>\DiM$, taking \eqref{PrOoF-MT2-EquA1-BBiSS} into account, by Corollary \ref{L-infty-CSE} we can assert, up to a subsequence, that:
\begin{eqnarray}
&& \|u_n-u\|_{L^\infty_\mu( X;\RR^m)}\to0.\label{PrOoF-MT2-EquA1-BiS}
\end{eqnarray}
Fix $\delta>0$ and set $A_\delta:=\{x\in A:{\rm dist}(x,\partial A)>\delta\}$ with $\partial A:=\overline{A}\setminus A$. Fix any $n\geq 1$ and any $q\geq 1$ and consider $W^-_i,W^+_i\subset X$ given by
\begin{trivlist} 
\item[]$W^-_i:=\left\{x\in X:{\rm dist}(x,A_\delta)\leq {\delta\over 3}+{(i-1)\delta\over 3q}\right\}$;
\item[]$W^+_i:=\left\{x\in X:{\delta\over 3}+{i\delta\over 3q}\leq{\rm dist}(x,A_\delta)\right\}$,
\end{trivlist}
where $i\in\{1,\cdots,q\}$. (Note that $W^-_i\subset A$.) For every $i\in\{1,\cdots,q\}$ there exists a Uryshon function $\varphi_i\in {\rm Lip}(X)$ for the pair $(W^+_i,W^-_i)$. Define $w_n^i:X\to\RR^m$  by 
\begin{equation}\label{Def-w-i-n-BiS}
w^i_n:=\varphi_iu_n+(1-\varphi_i)u.
\end{equation}
Then $w_n^i-u\in W^{1,p}_{\mu,0}(A;\RR^m)$. Fix any $t\in]0,1[$. Setting $W_i:=X\setminus (W^-_i\cup W^{+}_i)\subset A$ and using Theorem \ref{cheeger-theorem}(d) and \eqref{Mu-der-Prod} we have
$$
\nabla_\mu(tw^i_n)=t\nabla_\mu w_n^i=\left\{
\begin{array}{ll}
t\nabla_\mu u_n&\hbox{in }W^-_i\\
(1-t){t\over 1-t}D_\mu\varphi_i\otimes(u_n-u)+t\big(\varphi_i\nabla_\mu u_n+(1-\varphi_i)\nabla_\mu u\big)&\hbox{in }W_i\\
t\nabla_\mu u&\hbox{in }W^+_i.
\end{array}
\right.
$$
Noticing that $A=W^-_i\cup W_i\cup(A\cap W^+_i)$ we deduce that for every $i\in\{1,\cdots,q\}$,
\begin{eqnarray}
\int_A L(x,t\nabla_\mu w^i_n)d\mu&\leq&\int_A L(x,t\nabla_\mu u_n)d\mu+\int_{A\cap W^+_i}L(x,t\nabla_\mu u)d\mu\label{LayERs-Eq1-BiS}\\
&&+\int_{W_i} L(x,t\nabla_\mu w^i_n)d\mu.\nonumber
\end{eqnarray}
Fix any $q\in\{1,\cdots,q\}$. From the right inequality in \eqref{Gx-growth} and the inequality \eqref{hyp-convexity} we see that 
\begin{eqnarray}
\int_{W_i}L(x,t\nabla_\mu w^i_n)d\mu&\leq&\beta\mu(W_i)+ \beta\int_{W_i}G(x,t\nabla_\mu w^i_n)d\mu\nonumber\\
&\leq& \beta(1+\gamma)\mu(W_i)\nonumber\\
&&+\beta\gamma\int_{W_i}G(x,\varphi_i\nabla_\mu u_n+(1-\varphi_i)\nabla_\mu u)d\mu\nonumber\\
&&+\beta\gamma\int_{W_i}G\left(x,{t\over 1-t}D_\mu\varphi_i\otimes(u_n-u)\right)d\mu,\nonumber
\end{eqnarray}
and by using again the inequality \eqref{hyp-convexity} and the left inequality in \eqref{Gx-growth} we obtain
\begin{eqnarray}
\int_{W_i}L(x,t\nabla_\mu w^i_n)d\mu&\leq&\beta(1+\gamma+\gamma^2)\mu(W_i)\nonumber\\
&&+{\beta\gamma^2\over \alpha}\left(\int_{W_i}L(x,\nabla_\mu u_n)d\mu+\int_{W_i}L(x,\nabla_\mu u)d\mu\right)\nonumber\\
&&+\beta\gamma\int_{W_i}G\left(x,{t\over 1-t}D_\mu\varphi_i\otimes(u_n-u)\right)d\mu.\label{LayERs-EquA2-BBiiSS}
\end{eqnarray}

\begin{remark}
As $u\in\mathfrak{G}$ and \eqref{Gx-growth} holds, we have $\int_E L(x,\nabla_\mu u)d\mu<\infty$ for all $E\in\mathcal{O}(X)$.
\end{remark}

On the other hand, we have
$$
\left|{t\over 1-t}D_\mu\varphi_i(x)\otimes(u_n(x)-u(x))\right|\leq \left|{t\over 1-t}\right|\|D_\mu\varphi_i\|_{L^\infty_\mu( X)}\|u_n-u\|_{L^\infty_\mu( X;\RR^m)}
$$
for $\mu$-a.a. $x\in X$. But $\lim_{n\to\infty}\|u_n-u\|_{L^\infty_\mu( X;\RR^m)}=0$ by \eqref{PrOoF-MT2-EquA1-BiS}, hence for each $i\in\{1,\cdots,q\}$ there exists $n_{i}\geq 1$ such that
$$
\left|{t\over 1-t}D_\mu\varphi_i(x)\otimes(u_n(x)-u(x))\right|\leq r
$$
for $\mu$-a.a. $x\in X$ and all $n\geq n_{i}$ with $r>0$ given by \eqref{growth-on-Gx}. Hence
\begin{equation}\label{LayERs-EquA3-BBiiSS}
\int_{W_i}G\left(x,{t\over 1-t}D_\mu\varphi_i\otimes(u_n-u)\right)d\mu\leq \int_{W_i}\sup_{|\xi|\leq r}G(x,\xi)d\mu(x)
\end{equation}
for all $n\geq N_{q}$ with $N_{q}=\max\{n_{i}:i\in\{1,\cdots,q\}\}$. Moreover, we have:
\begin{eqnarray}
&&\hskip-14mm\int_A L(x,t\nabla_\mu u_n)d\mu\leq \int_A L(x,\nabla_\mu u_n)d\mu+\Delta^a_L(t)\left(\int_A a(x)d\mu(x)+\int_A L(x,\nabla_\mu u_n)d\mu\right);\label{Equ-LayERs-EquA4-BBiiSS}\\
&&\hskip-14mm\int_{A\cap W^+_i} L(x,t\nabla_\mu u)d\mu\leq  \int_{A\cap W^+_i} L(x,\nabla_\mu u)d\mu\nonumber\\
&&\hskip27mm+\Delta^a_L(t)\left(\int_{A\cap W^+_i} a(x)d\mu(x)+\int_{A\cap W^+_i} L(x,\nabla_\mu u)d\mu\right),\label{Equ-LayERs-EquA5-BBiiSS}
\end{eqnarray}
where $a\in L^1_\mu(X;]0,\infty])$ is given by \eqref{ru-usc-condition} (and $\limsup_{t\to 1^-}\Delta^a_L(t)\leq 0$ because $L$ is ru-usc).

Taking \eqref{LayERs-EquA3-BBiiSS} into account and substituting \eqref{LayERs-EquA2-BBiiSS}, \eqref{Equ-LayERs-EquA4-BBiiSS} and \eqref{Equ-LayERs-EquA5-BBiiSS} into \eqref{LayERs-Eq1-BiS} and then averaging these inequalities, it follows that for every $q\geq 1$ and every $n\geq N_{q}$, there exists $i_{n,q}\in\{1,\cdots,q\}$ such that
\begin{eqnarray}
\int_{A}L(x,\nabla_\mu (tw_n^{i_{n,q}}))d\mu\hskip-1.5mm&\leq&\hskip-1.5mm\int_A L(x,\nabla_\mu u_n)d\mu+\Delta^a_L(t)\left(\int_A a(x)d\mu(x)+\int_A L(x,\nabla_\mu u_n)d\mu\right)\nonumber\\
&&+{1\over q}\left[\int_{A} L(x,\nabla_\mu u)d\mu+\Delta^a_L(t)\left(\int_{A} a(x)d\mu(x)+\int_{A} L(x,\nabla_\mu u)d\mu\right)\right]\nonumber\\
&&+{c\over q}\left(\int_A\sup_{|\xi|\leq r}G(x,\xi)d\mu+\int_A L(x,\nabla_\mu u_n)d\mu+\int_AL(x,\nabla_\mu u)d\mu\right)\nonumber
\end{eqnarray}
with $c=\max\big\{\beta(1+\gamma+\gamma^2)+1, {\beta\gamma^2\over\alpha}\big\}$, where $\int_A\sup_{|\xi|\leq r}G(x,\xi)d\mu<\infty$ by \eqref{growth-on-Gx}. Thus, letting $n\to\infty$ and $q\to\infty$ and using \eqref{PrOoF-MT2-EquA3-BiS}, we get
\begin{equation}\label{FinAL-inEquAtIOn-SteP1-BBiiSS}
\limsup_{q\to\infty}\limsup_{n\to\infty}\int_{A}L(x,\nabla_\mu (tw_n^{i_{n,q}}))d\mu\leq \big(1+\Delta^a_L(t)\big)\overline{E}(u,A) + \Delta^a_L(t)\int_A a(x)d\mu(x).
\end{equation}
On the other hand, taking \eqref{Def-w-i-n-BiS} into account and using \eqref{PrOoF-MT2-EquA1-BBiSS} we see that
$$
\lim_{q\to\infty}\lim_{n\to\infty}\|tw_n^{i_{n,q}}-tu\|_{L^p_\mu( X;\RR^m)}=0.
$$
By diagonalization, there exists an increasing mapping $n\mapsto q_n$ with $q_n\to\infty$ such that:
\begin{eqnarray*}
&&\hskip-5.5mm\liminf_{n\to\infty}\int_{A}L(x,\nabla_\mu \hat w_n)d\mu\leq \limsup_{n\to\infty}\int_{A}L(x,\nabla_\mu \hat w_n)d\mu\leq \limsup_{q\to\infty}\limsup_{n\to\infty}\int_{A}L(x,\nabla_\mu (tw_n^{i_{n,q}}))d\mu;\\
&&\hskip-5.5mm\lim_{n\to\infty}\|\hat w_n-u\|_{L^p_\mu( X;\RR^m)}=0,
\end{eqnarray*}
where $\hat w_n:=tw_n^{i_{n,q_n}}$ is such that  $\hat w_n-tu\in W^{1,p}_{\mu,0}(A;\RR^m)$. Hence 
$$
\overline{E}_0(tu,A)\leq \limsup_{q\to\infty}\limsup_{n\to\infty}\int_{A}L(x,\nabla_\mu (tw_n^{i_{n,q}}))d\mu,
$$
and \eqref{GOal-Lemma-Bis} follows from \eqref{FinAL-inEquAtIOn-SteP1-BBiiSS}. 

From \eqref{GOal-Lemma-Bis} we deduce that 
$$
\limsup_{t\to 1^-}\overline{E}_0(tu,A)\leq \overline{E}(u,A).
$$
Moreover, from \eqref{GOal-Lemma-Bis-1} we have
$$
\overline{E}(u,A)\leq\liminf_{t\to 1^-}\overline{E}(tu,A)\leq \liminf_{t\to 1^-}\overline{E}_0(tu,A),
$$
which gives \eqref{GOal-Lemma-Bis-FiNaL}.
\end{proof}

\medskip

\paragraph{\bf Step 3: using the Vitali envelope} For each $u\in W^{1,p}_\mu(X;\RR^m)$ we consider the set function $\widecheck{{\rm m}}_u:\mathcal{O}(X)\to[0,\infty]$ defined by
\begin{equation}\label{DeFINItIon-Of-widecheck-m-u}
\widecheck{{\rm m}}_u(A):=\limsup_{t\to 1^-}{\rm m}_{tu}(A),
\end{equation}
where, for each $z\in W^{1,p}_\mu(X;\RR^m)$, ${\rm m}_z:\mathcal{O}(X)\to[0,\infty]$ is given by 
\begin{equation}\label{DeFINItIon-Of-widecheck-m-u-bis}
{\rm m}_z(A):=\inf\Big\{E(v,A):v-z\in W^{1,p}_{\mu,0}(A;\RR^m)\Big\}.
\end{equation}
For each $\eps>0$ and each $A\in\mathcal{O}(X)$, denote the class of countable families $\{Q_i:=Q_{\rho_i}(x_i)\}_{i\in I}$ of disjoint open balls of $A$ with $x_i\in A$, $\rho_i={\rm diam}(Q_i)\in]0,\eps[$ and $\mu(\partial Q_i)=0$ such that $\mu(A\setminus\cup_{i\in I}Q_i)=0$ by $\mathcal{V}_\eps(A)$, consider $\widecheck{{\rm m}}_u^\eps:\mathcal{O}(X)\to[0,\infty]$ given by
$$
\widecheck{\rm m}_u^\eps(A):=\inf\left\{\sum_{i\in I}\widecheck{\rm m}_u(Q_i):\{Q_i\}_{i\in I}\in \mathcal{V}_\eps(A)\right\},
$$
and define $\widecheck{{\rm m}}^*_u:\mathcal{O}(X)\to[0,\infty]$ by
$$
\widecheck{{\rm m}}^*_u(A):=\sup_{\eps>0}\widecheck{{\rm m}}^\eps_u(A)=\lim_{\eps\to0}\widecheck{{\rm m}}_u^\eps(A).
$$
The set function $\widecheck{{\rm m}}^*_u$ is called the Vitali envelope of $\widecheck{{\rm m}}_u$, see \S3.3  for more details. (Note that as $X$ satisfies the Vitali covering theorem, see Proposition \ref{Fundamental-Proposition-for-CalcVar-in-MMS}(c) and Remark \ref{ReMArK-VItALi-For-OpEN-SEtS}, we have $\mathcal{V}_\eps(A)\not=\emptyset$ for all $A\in\mathcal{O}(X)$ and all $\eps>0$.) 

\begin{lemma}\label{Lemma-3-MT}
If \eqref{p-coercivity}, \eqref{growth-on-Gx}, \eqref{hyp-convexity}, \eqref{Gx-growth} and \eqref{ru-usc-condition} hold then 
\begin{equation}\label{Eq-Lemma-3-MT-bis}
\overline{E}(u,A)=\widecheck{{\rm m}}^*_{u}(A)
\end{equation}
for all $u\in \mathfrak{G}$ and all $A\in\mathcal{O}(X)$.
\end{lemma}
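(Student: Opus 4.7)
The plan is to verify that the set function $\widecheck{\rm m}_u$ meets the hypotheses of Theorem \ref{Vitali-Envelope-Prop2} and to complement this with a direct diagonal gluing construction over fine Vitali coverings, exploiting the ru-usc condition \eqref{ru-usc-condition} and the convex-type growth \eqref{hyp-convexity}. Throughout, fix $u\in\mathfrak{G}$.

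\textbf{Upper bound $\widecheck{\rm m}^*_u(A)\leq\overline E(u,A)$.} First, ${\rm m}_{tu}(A)\leq\overline E_0(tu,A)$ for each $t\in(0,1)$: any $u_n-tu\in W^{1,p}_{\mu,0}(A;\RR^m)$ with $u_n\to tu$ in $L^p_\mu$ is admissible in the infimum defining ${\rm m}_{tu}(A)$, so ${\rm m}_{tu}(A)\leq E(u_n,A)$; taking $\liminf_n$ then infimum yields the claim. Combining with \eqref{GOal-Lemma-Bis} of Lemma \ref{Lemma-2-MT} and using $\limsup_{t\to 1^-}\Delta^a_L(t)\leq 0$ gives
$$
\widecheck{\rm m}_u(A)\leq\overline E(u,A)\qquad\text{for every }A\in\mathcal{O}(X).
$$
Subadditivity of $\widecheck{\rm m}_u$ follows by extension by zero: for disjoint $B,C\subset A$ with $\mu(A\setminus(B\cup C))=0$ and near-optimal $v_B,v_C$ for ${\rm m}_{tu}(B),{\rm m}_{tu}(C)$, the function $v:=tu+(v_B-tu)\mathds{1}_B+(v_C-tu)\mathds{1}_C$ is admissible for ${\rm m}_{tu}(A)$ with $E(v,A)=E(v_B,B)+E(v_C,C)$, so ${\rm m}_{tu}(A)\leq{\rm m}_{tu}(B)+{\rm m}_{tu}(C)$; $\limsup_{t\to 1^-}$ preserves the inequality. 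Hence $\widecheck{\rm m}_u$ satisfies hypotheses (a)--(b) of Theorem \ref{Vitali-Envelope-Prop2} with $\nu=\overline E(u,\cdot)$ (a finite Radon measure absolutely continuous with respect to $\mu$ by Lemma \ref{Lemma-1-MT} and Remark \ref{Remark-Lemma-1-MT}). The theorem gives $\widecheck{\rm m}^*_u(A)=\int_A d_\mu\widecheck{\rm m}_u\,d\mu$, and pointwise $d_\mu\widecheck{\rm m}_u\leq d_\mu\overline E(u,\cdot)=\lambda_u$ $\mu$-a.e., whence $\widecheck{\rm m}^*_u(A)\leq\overline E(u,A)$.

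\textbf{Reverse inequality $\overline E(u,A)\leq\widecheck{\rm m}^*_u(A)$.} I construct a recovery sequence $w^{(k)}\to u$ in $L^p_\mu$ with $\liminf_k E(w^{(k)},A)\leq\widecheck{\rm m}^*_u(A)$. For $k\geq 1$, pick $\{Q_i^{(k)}\}_{i\in I_k}\in\mathcal{V}_{\eps_k}(A)$ with $\eps_k\to 0$ and $\sum_{i\in I_k}\widecheck{\rm m}_u(Q_i^{(k)})\leq\widecheck{\rm m}^*_u(A)+\tfrac1k$, and a finite subfamily $I_k'\subset I_k$ with $\mu(A\setminus\bigcup_{i\in I_k'}Q_i^{(k)})<\tfrac1k$. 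Since $\widecheck{\rm m}_u(Q_i^{(k)})=\limsup_{t\to 1^-}{\rm m}_{tu}(Q_i^{(k)})$, for every $i\in I_k'$ there is $t^*_{i,k}<1$ with ${\rm m}_{tu}(Q_i^{(k)})\leq\widecheck{\rm m}_u(Q_i^{(k)})+\tfrac{1}{k|I_k'|}$ for all $t\in(t^*_{i,k},1)$; by finiteness of $I_k'$ a single $t_k\in\bigl(\max\{1-\tfrac1k,\max_{i\in I_k'}t^*_{i,k}\},1\bigr)$ works for all $i\in I_k'$. Choose $v_{i,k}$ with $v_{i,k}-t_ku\in W^{1,p}_{\mu,0}(Q_i^{(k)};\RR^m)$ and $E(v_{i,k},Q_i^{(k)})\leq{\rm m}_{t_ku}(Q_i^{(k)})+\tfrac{1}{k|I_k'|}$, and set
$$
w^{(k)}:=t_ku+\sum_{i\in I_k'}(v_{i,k}-t_ku)\mathds{1}_{Q_i^{(k)}}\in W^{1,p}_\mu(X;\RR^m).
$$
Disjoint support and the above bounds give
$$
E(w^{(k)},A)\leq\widecheck{\rm m}^*_u(A)+\tfrac3k+\int_{A\setminus\bigcup_{i\in I_k'}Q_i^{(k)}}L(x,t_k\nabla_\mu u)\,d\mu,
$$
whose last term vanishes as $k\to\infty$ since \eqref{Gx-growth} and \eqref{hyp-convexity} (applied with $\zeta=0$) yield $L(\cdot,t_k\nabla_\mu u)\leq\beta+\beta\gamma(1+G(\cdot,\nabla_\mu u)+G(\cdot,0))\in L^1_\mu(A)$ by \eqref{growth-on-Gx} and $u\in\mathfrak{G}$, and absolute continuity of the integral applies on a set of measure $<\tfrac1k$. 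Convergence $w^{(k)}\to u$ in $L^p_\mu(X;\RR^m)$ follows from $|t_k-1|\leq\tfrac1k$ together with Proposition \ref{Fundamental-Proposition-for-CalcVar-in-MMS}(b) and H\"older: each piece satisfies $\|v_{i,k}-t_ku\|_{L^p(Q_i^{(k)})}^p\leq\mu(Q_i^{(k)})^{1-1/\chi}(\rho_i^{(k)}C_S)^p\|\nabla_\mu(v_{i,k}-t_ku)\|_{L^p(Q_i^{(k)})}^p$, the $p$-coercivity \eqref{p-coercivity} bounds the gradient sum uniformly, and the prefactor is $\leq\mu(A)^{1-1/\chi}(\eps_kC_S)^p\to 0$. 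Thus $\overline E(u,A)\leq\liminf_kE(w^{(k)},A)\leq\widecheck{\rm m}^*_u(A)$.

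\textbf{Main obstacle.} The chief difficulty is the mismatch between the $\limsup$ defining $\widecheck{\rm m}_u$ and the typically countable Vitali family: no uniform threshold $t^*<1$ need exist across all $i\in I_k$. The finite truncation $I_k'$ resolves this by allowing a single $t_k$ to be chosen, and the uncovered remainder is controlled uniformly thanks to \eqref{growth-on-Gx}, \eqref{hyp-convexity} and $u\in\mathfrak{G}$, which together produce an $L^1_\mu$-envelope for $L(\cdot,t_k\nabla_\mu u)$ independent of $k$.
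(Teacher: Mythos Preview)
Your proof is correct and follows the same two-step strategy as the paper: bound $\widecheck{\rm m}_u\leq\overline E(u,\cdot)$ via Lemma~\ref{Lemma-2-MT} and the measure property of $\overline E(u,\cdot)$, then build a recovery sequence by gluing near-minimizers over a fine Vitali covering and using the $p$-Sobolev inequality for $L^p_\mu$-convergence.

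Two differences are worth recording. For the upper bound you detour through Theorem~\ref{Vitali-Envelope-Prop2} to compare densities; the paper argues more directly that $\sum_i\widecheck{\rm m}_u(Q_i)\leq\sum_i\overline E(u,Q_i)\leq\overline E(u,A)$ for any disjoint Vitali family, since $\overline E(u,\cdot)$ is already a measure. Your route works but is longer than needed. For the lower bound, however, your finite truncation $I_k'\subset I_k$ is a genuine improvement in rigor: the paper passes from $\sum_{i\in I}{\rm m}_{tu}(Q_i)$ to $\sum_{i\in I}\widecheck{\rm m}_u(Q_i)$ by taking $\limsup_{t\to1^-}$ through a countable sum without explicit justification (this is legitimate by reverse Fatou, since ${\rm m}_{tu}(Q_i)\leq\beta\mu(Q_i)+\beta\gamma\int_{Q_i}(1+G(\cdot,\nabla_\mu u)+G(\cdot,0))\,d\mu$ uniformly in $t$, but the paper does not say so). Your device of choosing a single $t_k$ valid on a finite subfamily, and controlling the leftover via the same $L^1_\mu$-envelope, sidesteps the issue cleanly. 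The ``main obstacle'' you flag is real, and your resolution is sound.
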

\begin{proof}[\bf Proof of Lemma \ref{Lemma-3-MT}]
Fix $u\in \mathfrak{G}$. Given any $A\in\mathcal{O}(X)$, it is easy to see that ${\rm m}_{tu}(A)\leq \overline{E}_0(tu,A)$ for all $t\in]0,1[$, hence 
$$
\widecheck{{\rm m}}_u(A)=\limsup_{t\to 1^-}{\rm m}_{tu}(A)\leq \lim_{t\to 1^-}\overline{E}_0(tu,A)=\overline{E}(u,A)
$$
by Lemma \ref{Lemma-2-MT}, and consequently
$$
\widecheck{{\rm m}}^*_u(A)\leq \overline{E}(u,A)
$$
because in the proof of Lemma \ref{Lemma-1-MT} it is established that $\overline{E}(u,\cdot)$ can be uniquely extended to a finite positive Radon measure on $X$, see Remark \ref{Remark-Lemma-1-MT}. Hence, to establish \eqref{Eq-Lemma-3-MT-bis}, it remains to prove that
\begin{equation}\label{EEEEqqqq}
\overline{E}(u,A)\leq \widecheck{\rm {m}}^*_u(A)
\end{equation}

with $\widecheck{\rm{m}}^*_u(A)<\infty$. Fix any $\eps>0$. By definition of $\widecheck{{\rm m}}^\eps_u(A)$ there exists $\{Q_{i}\}_{i\in I}\in\mathcal{V}_\eps(A)$  such that 
\begin{equation}\label{EEEqqq1}
\sum_{i\in I}\widecheck{\rm m}_u(Q_{i})\leq \widecheck{{\rm m}}^\eps_u(A)+{\eps\over 2}.
\end{equation}
Fix any $t>0$. For each $i\in I$, by definition of ${\rm{m}}_{tu}(Q_i)$ there exists $v_{t}^i\in W^{1,p}_{\mu}(Q_{i};\RR^m)$ such that $v_{t}^i-tu\in W^{1,p}_{\mu,0}(Q_{i};\RR^m)$ and
\begin{equation}\label{EEEqqq1-bis}
E(v_{t}^i,Q_i)\leq {\rm {m}}_{tu}(Q_i)+{\eps \mu(Q_i)\over 2\mu(A)}.
\end{equation}
Define $u^\eps_t:X\to\RR^m$ by 
$$
u^\eps_t:=\left\{
\begin{array}{ll}
tu&\hbox{in }X\setminus A\\
v_t^i&\hbox{in }Q_{i}.
\end{array}
\right.
$$
Then $u^\eps_t-tu\in W^{1,p}_{\mu,0}(A;\RR^m)$. Moreover, because of Proposition \ref{Fundamental-Proposition-for-CalcVar-in-MMS}(a), $\nabla_\mu u^\eps_t(x)=\nabla_\mu v^i_t(x)$ for $\mu$-a.e. $x\in Q_i$. From \eqref{EEEqqq1-bis} we see that
$$
E(u_{t}^\eps,A)\leq \sum_{i\in I}{\rm {m}}_{tu}(Q_i)+{\eps\over 2},
$$
hence
$
\limsup_{t\to 1^-}E(u_{t}^\eps,A)\leq \widecheck{{\rm m}}^\eps_u(A)+{\eps}
$
by using \eqref{EEEqqq1}, and consequently
\begin{equation}\label{Diag-SteP3-1}
\limsup_{\eps\to 0}\limsup_{t\to 1^-}E(u_{t}^\eps,A)\leq \widecheck{{\rm m}}^*_u(A).
\end{equation}
On the other hand, we have
\begin{eqnarray*}
\|u_t^\eps-u\|^{p}_{L^{\chi p}_\mu(X;\RR^m)}&\leq& 2^p\left[\|u_t^\eps-tu\|^{p}_{L^{\chi p}_\mu(X;\RR^m)}+\|tu-u\|^{p}_{L^{\chi p}_\mu(X;\RR^m)}\right]\\
&=&2^p\left[\left(\int_A|u_t^\eps-tu|^{\chi p}d\mu\right)^{1\over\chi}+(1-t)^p\|u\|^{p}_{L^{\chi p}_\mu(X;\RR^m)}\right]\\
&=&2^p\left[\left(\sum_{i\in I}\int_{Q_{i}}|v_t^i-tu|^{\chi p}d\mu\right)^{1\over\chi}+(1-t)^p\|u\|^{p}_{L^{\chi p}_\mu(X;\RR^m)}\right]\\
&\leq&2^p\left[\sum_{i\in I}\left(\int_{Q_{i}}|v_t^i-tu|^{\chi p}d\mu\right)^{1\over\chi}+(1-t)^p\|u\|^{p}_{L^{\chi p}_\mu(X;\RR^m)}\right]
\end{eqnarray*}
with $\chi\geq 1$ given by \eqref{Poincare-Inequality}. As $X$ supports a $p$-Sobolev inequality, see Proposition \ref{Fundamental-Proposition-for-CalcVar-in-MMS}(b), and ${\rm diam}(Q_i)\in]0,\eps[$ for all $i\in I$, we have
$$
\sum_{i\in I}\left(\int_{Q_{i}}|v_t^i-tu|^{\chi p}d\mu\right)^{1\over\chi}\leq\eps^{p} C_S^{p}\sum_{i\in I}\int_{Q_{i}}|\nabla_\mu v_t^i-t\nabla_\mu u|^pd\mu
$$
with $C_S>0$ given by \eqref{Poincare-Inequality}, hence
$$
\sum_{i\in I}\left(\int_{Q_{i}}|v_t^i-tu|^{\chi p}d\mu\right)^{1\over\chi}\leq 2^p\eps^{p} C_S^p\left(\sum_{i\in I}\int_{Q_{i}}|\nabla_\mu v_t^i|^pd\mu+t^p\int_A|\nabla_\mu u|^pd\mu\right),
$$
and consequently
\begin{eqnarray}\label{EEEqqq2}
\|u_t^\eps-u\|^{p}_{L^{\chi p}_\mu(X;\RR^m)}&\leq &2^{2p}\eps^{p} C_S^p\left(\sum_{i\in I}\int_{Q_{i}}|\nabla_\mu v_t^i|^pd\mu+t^p\int_A|\nabla_\mu u|^pd\mu\right)\nonumber\\
&&+2^p(1-t)^p\|u\|^{p}_{L^{\chi p}_\mu(X;\RR^m)}.\label{EEEqqq2}
\end{eqnarray}
Taking \eqref{p-coercivity}, the left inequality in \eqref{Gx-growth}, \eqref{EEEqqq1} and \eqref{EEEqqq1-bis} into account, from \eqref{EEEqqq2} we deduce that
$$
\limsup_{t\to 1^-}\|u_t^\eps-u\|^p_{L^{\chi p}_\mu(X;\RR^m)}\leq 2^p C_S^p\eps^{p}\left({1\over \alpha c}(\widecheck{{\rm m}}^\eps_u(A)+\eps)+\int_A|\nabla_\mu u|^pd\mu\right),
$$
which gives
\begin{equation}\label{Diag-SteP3-2}
\lim_{\eps\to0}\limsup_{t\to 1^-}\|u_t^\eps-u\|^p_{L^{\chi p}_\mu(X;\RR^m)}=0
\end{equation}
because $\lim_{\eps\to0}\widecheck{{\rm m}}_u^\eps(A)=\widecheck{{\rm m}}^*_u(A)<\infty$. According to \eqref{Diag-SteP3-1} and \eqref{Diag-SteP3-2}, by diagonalization there exists a mapping $\eps\mapsto t_\eps$, with $t_\eps\to 1^-$ as $\eps\to 0$, such that:
\begin{eqnarray}
&&\lim_{\eps\to 0}\|w_\eps-u\|^p_{L^{\chi p}_\mu(X;\RR^m)}=0;\label{EnD-EqSteP3-1}\\
&&\liminf_{\eps\to 0}E(w_\eps,A)\leq \widecheck{\rm {m}}^*_u(A)\label{EnD-EqSteP3-2}
\end{eqnarray}
with $w_\eps:=u^{\eps}_{t_\eps}$. Since $\chi p\geq p$, $w_\eps\to u$ in $L^{p}_\mu(X;\RR^m)$ by \eqref{EnD-EqSteP3-1}, and \eqref{EEEEqqqq} follows from \eqref{EnD-EqSteP3-2} by noticing that $\overline{E}(u;A)\leq\liminf_{\eps\to 0}E(w_\eps,A)$.
\end{proof}

\medskip 

\paragraph{\bf Step 4: differentiation with respect to \boldmath$\mu$\unboldmath} This step consists of applying Theorem \ref{Vitali-Envelope-Prop2} (with $\Theta=\widecheck{{\rm m}}_u$ where $u\in \mathfrak{S}$). More precisely, we have

\begin{lemma}\label{Lemma-4-MT}
If  \eqref{growth-on-Gx}, \eqref{hyp-convexity} and the left inequality in \eqref{Gx-growth} hold then 
\begin{equation}\label{Step4-IRepTHeo-Equat1}
\widecheck{{\rm m}}^*_u(A)=\int_A\lim_{\rho\to 0}{\widecheck{{\rm m}}_u(Q_\rho(x))\over \mu(Q_\rho(x))}d\mu(x)
\end{equation}
for all $u\in\mathfrak{S}$ and all $A\in\mathcal{O}(X)$. As a consequence, if futhermore \eqref{p-coercivity}, the right inequality in \eqref{Gx-growth} and \eqref{ru-usc-condition} hold then
\begin{equation}\label{Step4-IRepTHeo-Equat2}
\overline{E}(u,A)=\int_A\lim_{\rho\to 0}\limsup_{t\to 1^-}{{\rm m}_{tu}(Q_\rho(x))\over \mu(Q_\rho(x))}d\mu(x)
\end{equation}
for all $u\in\mathfrak{S}$ and all $A\in\mathcal{O}(X)$.
\end{lemma}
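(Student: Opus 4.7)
The plan is to obtain \eqref{Step4-IRepTHeo-Equat1} by a direct application of Theorem~\ref{Vitali-Envelope-Prop2} to the set function $\Theta:=\widecheck{{\rm m}}_u$, and then to combine it with Lemma~\ref{Lemma-3-MT} and the definition \eqref{DeFINItIon-Of-widecheck-m-u} to derive \eqref{Step4-IRepTHeo-Equat2}. Thus the task reduces to verifying, for $\widecheck{{\rm m}}_u$, the two hypotheses of that theorem: the existence of an absolutely continuous finite Radon majorant (condition (a)) and subadditivity (condition (b)).

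For the majorant I would test ${\rm m}_{tu}(A)$ with the trivial competitor $v=tu$, which yields ${\rm m}_{tu}(A)\le\int_A L(x,t\nabla_\mu u)d\mu$. Using the upper bound $L\le\beta(1+G)$ from \eqref{Gx-growth} and applying \eqref{hyp-convexity} with $\zeta=0$ gives $G(x,t\nabla_\mu u)\le\gamma(1+G(x,\nabla_\mu u)+G(x,0))$, so
$$L(x,t\nabla_\mu u(x))\le\beta(1+\gamma)+\beta\gamma G(x,\nabla_\mu u(x))+\beta\gamma G(x,0).$$
Since $G(\cdot,0)\in L^1_\mu(X)$ by \eqref{growth-on-Gx} and $G(\cdot,\nabla_\mu u)\in L^1_\mu(X)$ because $u$ lies in the effective domain of $\mathcal{G}$, the right-hand side is the density of a finite positive Radon measure $\nu$ on $X$, absolutely continuous with respect to $\mu$ and dominating $\widecheck{{\rm m}}_u$ (the bound being uniform in $t$).

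For subadditivity I would proceed by gluing: given $A\in\mathcal{O}(X)$ and disjoint $B,C\in\mathcal{O}(X)$ with $B\cup C\subset A$ and $\mu(A\setminus(B\cup C))=0$, and competitors $v_B\in tu+W^{1,p}_{\mu,0}(B;\RR^m)$, $v_C\in tu+W^{1,p}_{\mu,0}(C;\RR^m)$, set
$$v:=v_B\mathds{1}_B+v_C\mathds{1}_C+tu\mathds{1}_{X\setminus(B\cup C)}.$$
Because $B$ and $C$ are disjoint and open, sums of Lipschitz approximations of $v_B-tu$ and $v_C-tu$ with disjoint supports in $B$ and $C$ yield a function in $W^{1,p}_{\mu,0}(B\cup C;\RR^m)\subset W^{1,p}_{\mu,0}(A;\RR^m)$, so $v-tu\in W^{1,p}_{\mu,0}(A;\RR^m)$. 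Since $\mu(A\setminus(B\cup C))=0$ we have $E(v,A)=E(v_B,B)+E(v_C,C)$; taking infima gives ${\rm m}_{tu}(A)\le{\rm m}_{tu}(B)+{\rm m}_{tu}(C)$, and the subadditivity of $\limsup$ transfers this inequality to $\widecheck{{\rm m}}_u$.

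Once (a) and (b) are secured, Theorem~\ref{Vitali-Envelope-Prop2} produces $d_\mu\widecheck{{\rm m}}_u\in L^1_\mu(X)$ together with the integral representation \eqref{Step4-IRepTHeo-Equat1}. Under the further hypotheses \eqref{p-coercivity}, the right inequality in \eqref{Gx-growth} and \eqref{ru-usc-condition}, Lemma~\ref{Lemma-3-MT} delivers $\overline{E}(u,A)=\widecheck{{\rm m}}_u^*(A)$, and substituting the definition \eqref{DeFINItIon-Of-widecheck-m-u} of $\widecheck{{\rm m}}_u$ into \eqref{Step4-IRepTHeo-Equat1} produces \eqref{Step4-IRepTHeo-Equat2}. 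The main obstacle I foresee is the gluing step for subadditivity: in the abstract Cheeger--Sobolev setting one must confirm that concatenating two competitors across a $\mu$-null seam remains in the zero-trace space $tu+W^{1,p}_{\mu,0}(A;\RR^m)$, which requires a careful use of the density of ${\rm Lip}_0$ functions on each piece together with the closability of $\nabla_\mu$ from Proposition~\ref{Fundamental-Proposition-for-CalcVar-in-MMS}(a); the majorant step and the final substitution are essentially routine given the machinery already built.
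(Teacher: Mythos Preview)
Your proposal is correct and follows essentially the same route as the paper: verify hypotheses (a) and (b) of Theorem~\ref{Vitali-Envelope-Prop2} for $\Theta=\widecheck{{\rm m}}_u$ (the majorant via the test $v=tu$, the right inequality in \eqref{Gx-growth}, and \eqref{hyp-convexity} with $\zeta=0$; subadditivity via gluing competitors and passing to the $\limsup$), then combine with Lemma~\ref{Lemma-3-MT} and \eqref{DeFINItIon-Of-widecheck-m-u}. The paper dismisses the subadditivity of ${\rm m}_{tu}$ as ``easy to see''; your explicit gluing using the density of ${\rm Lip}_0$ and closability (Proposition~\ref{Fundamental-Proposition-for-CalcVar-in-MMS}(a)) is exactly the justification needed, and it goes through without difficulty since $B$ and $C$ are disjoint open sets.
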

\begin{proof}[\bf Proof of Lemma \ref{Lemma-4-MT}]
Fix $u\in\mathfrak{S}$. The integral representation of $\overline{E}(u,\cdot)$ in \eqref{Step4-IRepTHeo-Equat2} follows from \eqref{Step4-IRepTHeo-Equat1} by using Lemma \ref{Lemma-3-MT} and the definition of $\widecheck{{\rm m}}_u$ in \eqref{DeFINItIon-Of-widecheck-m-u}. So, we only need to establish \eqref{Step4-IRepTHeo-Equat1}. For this, it is sufficient to prove that $\widecheck{{\rm m}}_u$ is subadditive and there exists a finite Radon measure $\nu$ on $X$ which is absolutely continuous with respect to $\mu$ such that 
\begin{equation}\label{Vitali-enVeloPe-ApplicatION-Step4-mAjoration}
\widecheck{{\rm m}}_u(A)\leq \nu(A)
\end{equation}
for all $A\in\mathcal{O}(X)$, and then to apply Theorem \ref{Vitali-Envelope-Prop2}. For each $t\in]0,1[$, from the definition of ${\rm m}_{tu}$ in \eqref{DeFINItIon-Of-widecheck-m-u-bis}, it is easy to see that for every $A,B,C\in\mathcal{O}(X)$ with $B,C\subset A$, $B\cap C=\emptyset$ and $\mu(A\setminus B\cup C)=0$, 
$$
{\rm m}_{tu}(A)\leq {\rm m}_{tu}(B)+{\rm m}_{tu}(C),
$$ 
and so 
$$
\limsup_{t\to 1^-}{\rm m}_{tu}(A)\leq \limsup_{t\to 1^-}{\rm m}_{tu}(B)+\limsup_{t\to 1^-}{\rm m}_{tu}(C)\hbox{, i.e., }\widecheck{{\rm m}}_u(A)\leq \widecheck{{\rm m}}_u(B)+\widecheck{{\rm m}}_u(C), 
$$
which shows the subadditivity of $\widecheck{{\rm m}}_u$. On the other hand, Given any $t\in]0,1[$, by using the right inequality in \eqref{Gx-growth} we have
$$
{\rm m}_{tu}(A)\leq \beta\mu(A)+\beta\int_A G(x,t\nabla_\mu u(x))d\mu(x).
$$
But, from \eqref{hyp-convexity} we see that  $G(x,t\nabla_\mu u(x))\leq \gamma\big(1+G(x,\nabla_\mu u(x))+G(x,0)\big)$ for $\mu$-a.a. $x\in X$, hence
$$
{\rm m}_{tu}(A)\leq \beta\mu(A)+\beta\gamma\mu(A)+ \beta\gamma\int_A\Big(G(x,\nabla_\mu u(x))+G(x,0)\Big)d\mu(x)
$$
 Letting $t\to 1^-$, we conclude that
$$
\widecheck{{\rm m}}_u(A)\leq c\left(\mu(A)+ \int_A\Big(G(x,\nabla_\mu u(x))+G(x,0)\Big)d\mu(x)\right).
$$
with $c:=\beta(1+\gamma)$. Thus \eqref{Vitali-enVeloPe-ApplicatION-Step4-mAjoration} is satisfied with the Radon measure $\nu:=c(1+G(x,\nabla_\mu u(x))+G(x,0))d\mu$ which is necessarily finite since $u\in\mathfrak{S}$ and $G(\cdot,0)\in L^1_\mu(X)$ because $G(\cdot,0)\leq\sup_{|\xi|\leq r}G(\cdot,\xi)$ and  $\sup_{|\xi|\leq r}G(\cdot,\xi)\in L^1_\mu(X)$ by \eqref{growth-on-Gx}.
\end{proof}
\medskip

\paragraph{\bf Step 5: removing by affine functions} According to \eqref{Step4-IRepTHeo-Equat2}, the proof of Theorem \ref{MainTheorem} will be completed (see Remark \ref{remark-STeP6-BiS} and also Step 6) if we prove that for each $u\in \mathfrak{S}$ and $\mu$-a.e. $x\in X$, we have:
\begin{eqnarray}
&&\lim_{\rho\to 0}\limsup_{t\to 1^-}{{\rm m}_{tu}(Q_\rho(x))\over \mu(Q_\rho(x))}\geq\liminf_{t\to 1^-}\limsup_{\rho\to 0}{{\rm m}_{tu_x}(Q_\rho(x))\over\mu(Q_\rho(x))},\nonumber\\
&&\hbox{i.e., }\lim_{\rho\to 0}{\widecheck{{\rm m}}_{u}(Q_\rho(x))\over \mu(Q_\rho(x))}\geq\liminf_{t\to 1^-}\limsup_{\rho\to 0}{{\rm m}_{tu_x}(Q_\rho(x))\over\mu(Q_\rho(x))}; \label{FiNaL-EqUa2}\\
&&\lim_{\rho\to 0}\limsup_{t\to 1^-}{{\rm m}_{tu}(Q_\rho(x))\over \mu(Q_\rho(x))}\leq\liminf_{t\to 1^-}\limsup_{\rho\to 0}{{\rm m}_{tu_x}(Q_\rho(x))\over\mu(Q_\rho(x))},\nonumber\\
&&\hbox{i.e., }\lim_{\rho\to 0}{\widecheck{{\rm m}}_{u}(Q_\rho(x))\over \mu(Q_\rho(x))}\leq\liminf_{t\to 1^-}\limsup_{\rho\to 0}{{\rm m}_{tu_x}(Q_\rho(x))\over\mu(Q_\rho(x))},\label{FiNaL-EqUa1}
\end{eqnarray}
where $u_x\in W^{1,p}_\mu(\Omega;\RR^m)$ is given by Proposition \ref{Fundamental-Proposition-for-CalcVar-in-MMS}(d) (and satisfies \eqref{FinALAssuMpTIOnOne} and \eqref{FinALAssuMpTIOnTwo}).

\begin{remark}\label{remark-STeP6-BiS}
In fact, \eqref{FiNaL-EqUa2} and \eqref{FiNaL-EqUa1} means that
$$
\lim_{\rho\to 0}{\widecheck{{\rm m}}_{u}(Q_\rho(x))\over \mu(Q_\rho(x))}=\liminf_{t\to 1^-}\limsup_{\rho\to 0}{{\rm m}_{tu_x}(Q_\rho(x))\over\mu(Q_\rho(x))}.
$$
On the other hand, it is easily seen that
$$
\limsup_{\rho\to 0}{{\rm m}_{tu_x}(Q_\rho(x))\over\mu(Q_\rho(x))}=\mathcal{Q}_\mu L(x,\nabla_\mu u(x)),
$$
where $\mathcal{Q}_\mu L$ is the $\mu$-quasiconvexification of $L$ defined in \eqref{DefInITioN-Of-Hrho-mu-Lx-xi}. Hence
$$
\lim_{\rho\to 0}{\widecheck{{\rm m}}_{u}(Q_\rho(x))\over \mu(Q_\rho(x))}=\mathcal{Q}_\mu L(x,\nabla_\mu u(x)).
$$
\end{remark}

We only give the proof of \eqref{FiNaL-EqUa2}. As the proof of \eqref{FiNaL-EqUa1}  uses the same method, its detailled verification is left to the reader.  

\begin{proof}[\bf Proof of (\ref{FiNaL-EqUa2})]
Fix $u\in\mathfrak{S}$. Fix any $\eps>0$, any $\tau\in]0,1[$, any $\rho\in]0,\eps[$, any $t\in]0,1[$ and any $s\in]t,1[$. By definition of ${\rm m}_{{s}u}(Q_{\tau\rho}(x))$, where there is no loss of generality in assuming that $\mu(\partial Q_{\tau\rho}(x))=0$, there exists $w:X\to\RR^m$ such that $w-{s}u\in W^{1,p}_{\mu,0}(Q_{\tau\rho}(x);\RR^m)$ and 
\begin{equation}\label{FunDaMenTal-IneQualiTY-Final}
\int_{Q_{\tau\rho}(x)} L(y,\nabla_\mu w(y))d\mu(y)\leq {\rm m}_{{s}u}(Q_{\tau\rho}(x))+\eps\mu(Q_{\tau\rho}(x)).
\end{equation}
By Proposition \ref{Fundamental-Proposition-for-CalcVar-in-MMS}(e) there is a Uryshon function $\varphi\in{\rm Lip}(X)$ for the pair $(X\setminus Q_{\rho}(x),\overline{Q}_{\tau\rho}(x))$ such that 
\begin{equation}\label{PlAtEAu-FunCtIOn-ProPerTy}
\|D_\mu\varphi\|_{L^\infty_\mu(X;\RR^N)}\leq {\theta\over\rho(1-\tau)}
\end{equation} 
for some $\theta>0$ (which does not depend on $\rho$). Define $v\in W^{1,p}_{\mu}(Q_\rho(x);\RR^m)$ by
$$
v:=\varphi {t\over s}u+(1-\varphi){t\over s}u_x.
$$
Then $v-{t\over s}u_x\in W^{1,p}_{\mu,0}(Q_\rho(x);\RR^m)$. Using Theorem \ref{cheeger-theorem}(d) and \eqref{Mu-der-Prod} we have
\begin{eqnarray*}
\nabla_\mu(sv)&=&\left\{
\begin{array}{ll}
\nabla_{\mu}(tu)&\hbox{in }\overline{Q}_{\tau\rho}(x)\\
tD_\mu\varphi\otimes(u-u_x)+s\big(\varphi {t\over s}\nabla_\mu u+(1-\varphi){t\over s}\nabla_\mu u(x)\big)&\hbox{in }Q_\rho(x)\setminus \overline{Q}_{\tau\rho}(x)
\end{array}
\right.\\
&=&\left\{
\begin{array}{ll}
\nabla_{\mu}(tu)&\hbox{in }\overline{Q}_{\tau\rho}(x)\\
(1-t){t\over 1-t}D_\mu\varphi\otimes(u-u_x)+t\big(\varphi \nabla_\mu u+(1-\varphi)\nabla_\mu u(x)\big)&\hbox{in }Q_\rho(x)\setminus \overline{Q}_{\tau\rho}(x).
\end{array}
\right.
\end{eqnarray*}

As ${t\over s}w-tu\in W^{1,p}_{\mu,0}(Q_{\tau\rho}(x);\RR^m)$ we have $sv+({t\over s}w-tu)-tu_x\in W^{1,p}_{\mu,0}(Q_\rho(x);\RR^m)$. Noticing that $\mu(\partial Q_{\tau\rho}(x))=0$ and, because of Proposition \eqref{Fundamental-Proposition-for-CalcVar-in-MMS}(a), $\nabla_\mu ({t\over s}w-tu)(y)=0$ for $\mu$-a.e. $y\in Q_\rho(x)\setminus \overline{Q}_{\tau\rho}(x)$, we see that 
\begin{eqnarray*}
{{\rm m}_{tu_x}(Q_{\rho}(x))\over\mu(Q_{\tau\rho}(x))}&\leq&{1\over \mu(Q_{\tau\rho}(x))}\int_{Q_\rho(x)}L\left(y,\nabla_\mu (sv)+\nabla_\mu \Big({t\over s}w-tu\Big)\right)d\mu\label{FiRsTEquAtIOnofTHelaSTPrOOf}\\
&=&{1\over\mu(Q_{\tau\rho}(x))}\int_{\overline{Q}_{\tau\rho}(x)}L\left(y,\nabla_\mu(tu)+\nabla_\mu \Big({t\over s}w-tu\Big)\right)d\mu\nonumber\\
&&+{1\over\mu(Q_{\tau\rho}(x))}\int_{Q_\rho(x)\setminus \overline{Q}_{\tau\rho}(x)}L(y,\nabla_\mu (sv))d\mu\nonumber\\
&=&{1\over\mu(Q_{\tau\rho}(x))}\int_{Q_{\tau\rho}(x)}L\left(y,{t\over s}\nabla_\mu w\right)d\mu\\
&&+{1\over\mu(Q_{\tau\rho}(x))}\int_{Q_\rho(x)\setminus Q_{\tau\rho}(x)}L(y,\nabla_\mu (sv))d\mu.\nonumber\\
\end{eqnarray*}
It follows that
\begin{eqnarray*}
{{\rm m}_{tu_x}(Q_{\rho}(x))\over\mu(Q_{\tau\rho}(x))}&\leq&{1\over \mu(Q_{\tau\rho}(x))}\int_{Q_\rho(x)}L(y,\nabla_\mu w)d\mu\\
&&+\Delta^a_L\left({t\over s}\right)\left({\mu(Q_\rho(x))\over \mu(Q_{\tau\rho}(x))}\mint_{Q_\rho(x)}ad\mu+{1\over \mu(Q_{\tau\rho}(x))}\int_{Q_\rho(x)}L(y,\nabla_\mu w)d\mu\right)\\
&&+{1\over\mu(Q_{\tau\rho}(x))}\int_{Q_\rho(x)\setminus Q_{\tau\rho}(x)}L(y,\nabla_\mu (sv))d\mu,
\end{eqnarray*}
where $a\in L^1_\mu(X;]0,\infty])$ is given by \eqref{ru-usc-condition} (and $\limsup_{r\to 1^-}\Delta_L^a(r)\leq 0$ because $L$ is ru-usc). Taking \eqref{FunDaMenTal-IneQualiTY-Final}, \eqref{hyp-convexity} and the right inequality in \eqref{Gx-growth}  into account we deduce that
\begin{eqnarray*}
{{\rm m}_{tu_x}(Q_{\rho}(x))\over\mu(Q_{\tau\rho}(x))}&\leq &\left(1+\Delta^a_L\left({t\over s}\right)\right)\left({{\rm m}_{su}(Q_{\tau\rho}(x))\over\mu(Q_{\tau\rho}(x))}+\eps\right)\\
&&+\Delta^a_L\left({t\over s}\right){\mu(Q_\rho(x))\over \mu(Q_{\tau\rho}(x))}\mint_{Q_\rho(x)}ad\mu\nonumber\\
&&+{c\over\mu(Q_{\tau\rho}(x))}\int_{Q_\rho(x)\setminus Q_{\tau\rho}(x)}G\left(y,{t\over 1-t}D_\mu\varphi\otimes(u-u_x)\right)d\mu\\
&&+{c\over\mu(Q_{\tau\rho}(x))}\int_{Q_\rho(x)\setminus Q_{\tau\rho}(x)} \big(G(y,\nabla_\mu u)+G(y,\nabla_\mu u(x))\big)d\mu\\
&&+c\left({\mu(Q_\rho(x))\over\mu(Q_{\tau\rho}(x))}-1\right)
\end{eqnarray*}
with $c:=\beta+\beta\gamma+\beta\gamma^2$, where $\gamma>0$ and $\beta>0$ given by \eqref{hyp-convexity} and \eqref{Gx-growth} respectively. Thus, noticing that $\mu(Q_\rho(x))\geq \mu(Q_{\tau\rho}(x))$ and letting $s\to 1^-$, we obtain
\begin{eqnarray}
{{\rm m}_{tu_x}(Q_{\rho}(x))\over\mu(Q_{\rho}(x))}&\leq &\left(1+\limsup_{s\to 1^-}\Delta^a_L\left({t\over s}\right)\right)\left({\widecheck{{\rm m}}_{u}(Q_{\tau\rho}(x))\over\mu(Q_{\tau\rho}(x))}+\eps\right)\nonumber\\
&&+\limsup_{s\to 1^-}\Delta^a_L\left({t\over s}\right){\mu(Q_\rho(x))\over \mu(Q_{\tau\rho}(x))}\mint_{Q_\rho(x)}ad\mu\nonumber\\
&&+{c\over\mu(Q_{\tau\rho}(x))}\int_{Q_\rho(x)\setminus Q_{\tau\rho}(x)}G\left(y,{t\over 1-t}D_\mu\varphi\otimes(u-u_x)\right)d\mu\nonumber\\
&&+{c\over\mu(Q_{\tau\rho}(x))}\int_{Q_\rho(x)\setminus Q_{\tau\rho}(x)} \big(G(y,\nabla_\mu u)+G(y,\nabla_\mu u(x))\big)d\mu\nonumber\\
&&+c\left({\mu(Q_\rho(x))\over\mu(Q_{\tau\rho}(x))}-1\right).\label{FiRsTEquAtIOnofTHelaSTPrOOf}
\end{eqnarray}
On the other hand, by \eqref{PlAtEAu-FunCtIOn-ProPerTy} we have
\begin{eqnarray*}
\left|{t\over 1-t}D_\mu\varphi(y)\otimes(u(y)-u_x(y))\right|&\leq&\left|{t\over 1-t}\right|\|D_\mu\varphi\|_{L^\infty_\mu(X)}\|u-u_x\|_{L^\infty_\mu(Q_\rho(x);\RR^m)}\\
&\leq&{t\theta\over(1-t)(1-\tau)}{1\over\rho}\|u-u_x\|_{L^\infty_\mu(Q_\rho(x);\RR^m)}
\end{eqnarray*}
for $\mu$-a.a. $y\in Q_\rho(x)\setminus Q_{\tau\rho}(x)$. But, since $p>\DiM$, $\lim_{\rho\to 0}{1\over\rho}\|u-u_x\|_{L^\infty_\mu(Q_\rho(x);\RR^m)}=0$ by \eqref{FinALAssuMpTIOnTwo}, hence there exists $\rho_0>0$ (which depends on $t$ and $\tau$) such that
$$
\left|{t\over 1-t}D_\mu\varphi(y)\otimes(u(y)-u_x(y))\right|\leq r
$$
for $\mu$-a.a. $y\in Q_\rho(x)\setminus Q_{\tau\rho}(x)$ and all $\rho\in]0,\rho_0[$ with $r>0$ given by \eqref{growth-on-Gx}. Hence
\begin{equation}\label{StEP5-InEQUaLITY-1}
\int_{Q_\rho(x)\setminus Q_{\tau\rho}(x)}G\left(y,{t\over 1-t}D_\mu\varphi\otimes(u-u_x)\right)d\mu\leq \int_{Q_\rho(x)\setminus Q_{\tau\rho}(x)}\sup_{|\xi|\leq r}G(y,\xi)d\mu(y)
\end{equation}
for all $\rho\in]0,\rho_0[$. Moreover, it easy to see that:
\begin{eqnarray}
\int_{Q_\rho(x)\setminus Q_{\tau\rho}(x)}\sup_{|\xi|\leq r}G(y,\xi)d\mu(y)&\leq&\mu(Q_{\rho}(x))\mint_{Q_{\rho}(x)}\left|\sup_{|\xi|\leq r}G(y,\xi)-\sup_{|\xi|\leq r}G(x,\xi)\right|d\mu(y)\nonumber\\
&&+\mu\left(Q_\rho(x)\setminus Q_{\tau\rho}(x)\right)\sup_{|\xi|\leq r}G(x,\xi);\label{StEP5-InEQUaLITY-0-add}\\
\int_{Q_\rho(x)\setminus Q_{\tau\rho}(x)} G(y,\nabla_\mu u(y))d\mu(y)&\leq& \mu(Q_{\rho}(x))\mint_{Q_{\rho}(x)}\big|G(y,\nabla_\mu u(y))-G(x,\nabla_\mu u(x))\big|d\mu(y)\nonumber\\
&&+\mu\left(Q_\rho(x)\setminus Q_{\tau\rho}(x)\right)G(x,\nabla_\mu u(x));\label{StEP5-InEQUaLITY-2}\\
\int_{Q_\rho(x)\setminus Q_{\tau\rho}(x)}G(y,\nabla_\mu u(x))d\mu(y)&\leq&\mu(Q_{\rho}(x))\mint_{Q_{\rho}(x)}\big|G(y,\nabla_\mu u(x))-G(x,\nabla_\mu u(x))\big|d\mu(y)\nonumber\\
&&+\mu\left(Q_\rho(x)\setminus Q_{\tau\rho}(x)\right)G(x,\nabla_\mu u(x)).\label{StEP5-InEQUaLITY-3}
\end{eqnarray}
Combining \eqref{StEP5-InEQUaLITY-1} with \eqref{StEP5-InEQUaLITY-0-add}, \eqref{StEP5-InEQUaLITY-2} and \eqref{StEP5-InEQUaLITY-3} we deduce that
\begin{eqnarray}
{{\rm m}_{tu_x}(Q_{\rho}(x))\over\mu(Q_{\rho}(x))}&\leq &\left(1+\limsup_{s\to 1^-}\Delta^a_L\left({t\over s}\right)\right)\left({\widecheck{{\rm m}}_{u}(Q_{\tau\rho}(x))\over\mu(Q_{\tau\rho}(x))}+\eps\right)\nonumber\\
&&+\limsup_{s\to 1^-}\Delta^a_L\left({t\over s}\right){\mu(Q_\rho(x))\over \mu(Q_{\tau\rho}(x))}\mint_{Q_\rho(x)}a(y)d\mu(y)\nonumber\\
&&+c{\mu(Q_{\rho}(x))\over\mu(Q_{\tau\rho}(x))}\mint_{Q_{\rho}(x)}\left|\sup_{|\xi|\leq r}G(y,\xi)-\sup_{|\xi|\leq r}G(x,\xi)\right|d\mu(y)\nonumber\\
&&+c{\mu(Q_{\rho}(x))\over\mu(Q_{\tau\rho}(x))}\mint_{Q_{\rho}(x)}\big|G(y,\nabla_\mu u(y))-G(x,\nabla_\mu u(x))\big|d\mu(y)\nonumber\\
&&+c{\mu(Q_{\rho}(x))\over\mu(Q_{\tau\rho}(x))}\mint_{Q_{\rho}(x)}\big|G(y,\nabla_\mu u(x))-G(x,\nabla_\mu u(x))\big|d\mu(y)\nonumber\\
&&+c\left({\mu(Q_\rho(x))\over\mu(Q_{\tau\rho}(x))}-1\right)\sup_{|\xi|\leq r}G(x,\xi)\nonumber\\
&&+2c\left({\mu(Q_\rho(x))\over\mu(Q_{\tau\rho}(x))}-1\right)G(x,\nabla_\mu u(x))\nonumber\\
&&+c\left({\mu(Q_\rho(x))\over\mu(Q_{\tau\rho}(x))}-1\right).\label{FiRsTEquAtIOnofTHelaSTPrOOf-BiS}
\end{eqnarray}
As $\sup_{|\xi|\leq r}G(\cdot,\xi)\in L^1_\mu(X)$ by \eqref{growth-on-Gx}, and $\mu$ is a doubling measure, we have 
\begin{equation}\label{FiRsTEquAtIOnofTHelaSTPrOOf-BiS-EQ0-add}
\lim_{\eta\to 0}\mint_{Q_{\eta}(x)}\left|\sup_{|\xi|\leq r}G(y,\xi)-\sup_{|\xi|\leq r}G(x,\xi)\right|d\mu(y)=0.
\end{equation}
In the same way, as $u\in\mathfrak{S}$, i.e., $G(\cdot,\nabla_\mu u(\cdot))\in L^1_\mu(X)$ (and $\mu$ is a doubling measure) we can assert that
\begin{equation}\label{FiRsTEquAtIOnofTHelaSTPrOOf-BiS-EQ1}
\lim_{\eta\to0}\mint_{Q_\eta(x)}\big|G(y,\nabla_\mu u(y))-G(x,\nabla_\mu u(x))\big|d\mu(y)=0,
\end{equation}
and by \eqref{growth-on-Gx-2} we have
\begin{equation}\label{FiRsTEquAtIOnofTHelaSTPrOOf-BiS-EQ2}
\lim_{\eta\to0}\mint_{Q_\eta(x)}\big|G(y,\nabla_\mu u(x))-G(x,\nabla_\mu u(x))\big|d\mu(y)=0.
\end{equation}
Moreover, since $a\in L^1_\mu(X;]0,\infty])$, it is clear that
\begin{equation}\label{FiRsTEquAtIOnofTHelaSTPrOOf-BiS-EQ3}
\lim_{\eta\to 0}\mint_{Q_\eta(x)}a(y)d\mu(y)=a(x).
\end{equation}
Letting $\rho\to 0$ in \eqref{FiRsTEquAtIOnofTHelaSTPrOOf-BiS} and using \eqref{FiRsTEquAtIOnofTHelaSTPrOOf-BiS-EQ0-add}, \eqref{FiRsTEquAtIOnofTHelaSTPrOOf-BiS-EQ1}, \eqref{FiRsTEquAtIOnofTHelaSTPrOOf-BiS-EQ2} and  \eqref{FiRsTEquAtIOnofTHelaSTPrOOf-BiS-EQ3} we see that
\begin{eqnarray}
\limsup_{\rho\to0}{{\rm m}_{tu_x}(Q_\rho(x))\over\mu(Q_\rho(x))}&\leq &\left(1+\limsup_{s\to 1^-}\Delta^a_L\left({t\over s}\right)\right)\left(\lim_{\rho\to 0}{\widecheck{{\rm m}}_{u}(Q_{\rho}(x))\over\mu(Q_{\rho}(x))}+\eps\right)\nonumber\\
&&+\limsup_{s\to 1^-}\Delta^a_L\left({t\over s}\right)\limsup_{\rho\to 0}{\mu(Q_\rho(x))\over \mu(Q_{\tau\rho}(x))}a(x)\nonumber\\
&&+c\left({\mu(Q_\rho(x))\over\mu(Q_{\tau\rho}(x))}-1\right)\sup_{|\xi|\leq r}G(x,\xi)\nonumber\\
&&+2c\left(\limsup_{\rho\to 0}{\mu(Q_\rho(x))\over\mu(Q_{\tau\rho}(x))}-1\right)G(x,\nabla_\mu u(x))\nonumber\\
&&+c\left(\limsup_{\rho\to 0}{\mu(Q_\rho(x))\over\mu(Q_{\tau\rho}(x))}-1\right).\label{FiRsTEquAtIOnofTHelaSTPrOOf-BiS-BiS}
\end{eqnarray}
Letting $t\to 1^-$ and $\tau\to 1^-$ in \eqref{FiRsTEquAtIOnofTHelaSTPrOOf-BiS-BiS} and using \eqref{DoublINgAssUMpTiON} we deduce that
\begin{eqnarray}
\liminf_{t\to 1^-}\limsup_{\rho\to0}{{\rm m}_{tu_x}(Q_\rho(x))\over\mu(Q_\rho(x))}&\leq &\left(1+\liminf_{t\to 1^-}\limsup_{s\to 1^-}\Delta^a_L\left({t\over s}\right)\right)\left(\lim_{\rho\to 0}{\widecheck{{\rm m}}_{u}(Q_{\rho}(x))\over\mu(Q_{\rho}(x))}+\eps\right)\nonumber\\
&&+\liminf_{t\to 1^-}\limsup_{s\to 1^-}\Delta^a_L\left({t\over s}\right)a(x).\label{FiRsTEquAtIOnofTHelaSTPrOOf-BiS-BiS-BiS}
\end{eqnarray}
But, by diagonalization there exists a mapping $s\mapsto t_s$ with $t_s\to 1^-$ as $s\to 1^-$ such that:
\begin{trivlist}
 \item $\displaystyle\lim_{s\to 1^-}{t_s\over s}=1$;
 \item $\displaystyle\liminf_{t\to 1^-}\limsup_{s\to 1^-}\Delta^a_L\left({t\over s}\right)\leq\limsup_{s\to 1^-}\Delta^a_L\left({t_s\over s}\right)$.
 \end{trivlist} 
But $\limsup_{r\to 1^-}\Delta_L^a(r)\leq 0$ because $L$ is ru-usc, hence 
$$
\liminf_{t\to 1^-}\limsup_{s\to 1^-}\Delta^a_L\left({t\over s}\right)\leq 0,
$$ 
and so from \eqref{FiRsTEquAtIOnofTHelaSTPrOOf-BiS-BiS-BiS} we conclude that
$$
\liminf_{t\to 1^-}\limsup_{\rho\to0}{{\rm m}_{tu_x}(Q_\rho(x))\over\mu(Q_\rho(x))}\leq \lim_{\rho\to 0}{\widecheck{{\rm m}}_{u}(Q_{\rho}(x))\over\mu(Q_{\rho}(x))}+\eps,
$$
and \eqref{FiNaL-EqUa2} follows by letting $\eps\to0$.
\end{proof}

\medskip

\paragraph{\bf Step 6: end of the proof} From \eqref{Gx-growth} we see that 
$$
\alpha\overline{\mathcal{G}}(u)\leq \overline{E}(u,X)\leq\beta\big(1+\overline{\mathcal{G}}(u)\big)
$$
for all $u\in W^{1,p}_\mu(X;\RR^m)$, where $\overline{\mathcal{G}}$ is defined \eqref{Def-Of-LSC-G-For-ENdofTheProof}. Hence, $\overline{E}(u,X)=\infty$ if $u\in W^{1,p}_\mu(X;\RR^m)\setminus\overline{\mathfrak{S}}$, where $\overline{\mathfrak{S}}$ is the effective domain of $\overline{\mathcal{G}}$, and \eqref{Main-Relaxed-InTeGrAl} follows because it is assumed that $\mathfrak{S}=\overline{\mathfrak{S}}$, where $\mathfrak{S}$ denotes the effective domain of $\mathcal{G}$ defined in \eqref{Def-Of-LSC-G-For-ENdofTheProof-BiS}. \endproof


\end{document}